\documentclass[12pt]{article}
\usepackage{curves}
\usepackage{amsmath, amssymb}
\def\mineappendix{
        \setcounter{section}{1}
        \setcounter{subsection}{0}
        \def\thesection{\Alph{section}}
        \def\sectionap{\@startsection  {section}{1}{\z@}
                        {-3.5ex plus-1ex minus-.2ex} {0ex plus.2ex}
                        {\reset@font\Large\bf  Appendix:  \, }
                        }
        }
\makeatother
\def\Proclaim #1. #2\par{\bigbreak\noindent{\sc#1.\enspace}{\it#2}\par}

\newtheorem{lemma}{Lemma}[section]
\newtheorem{corollary}[lemma]{Corollary}
\newtheorem{theorem}[lemma]{Theorem}
\newtheorem{proposition}[lemma]{Proposition}
\newtheorem{example}[lemma]{Example}
\newtheorem{definition}[lemma]{Definition}
\newtheorem{remark}[lemma]{Remark}

\title{Isometry Groups of Right Invariant Metrics in  Geometric Quantum Complexity}
\author{Xiaobo Liu\thanks{Research was partially supported by NSFC grants 12341105 and 12526302.}, \,\,\, Lei Zheng}
\date{}

\begin{document}
\maketitle

\begin{abstract}
	In this paper, we give a complete description for the full isometry groups of a class of right invariant Riemannian metrics on
the special unitary group $\mathrm{SU}(2^N)$.  These metrics have been used by physicists to study  Nielsen's geometric approach for complexities in quantum computations.
\end{abstract}

\section{Introduction}

    The symmetry of  a Riemannian manifold is described by its  isometry group. This group plays an important role
    in the study of geometric properties of the manifold. An interesting class of Riemannian manifolds is Lie groups endowed with right invariant (or equivalently left invariant) metrics. There has been lots of interests in describing the full isometry groups of such manifolds. For example, isometry groups of certain
    3 or 4 dimensional Lie groups with left invariant metrics were studied in 
    \cite{Ayad2024unimodular}, \cite{Ayad2025nonunimodular}, \cite{CR}, \cite{ha2012isometry}, \cite{Shin}, etc.
    The isometry groups of  compact connected absolutely simple real Lie groups with bi-invariant metrics are given in \cite{DP}. In general, it is quite difficult to give a complete description of the full isometry groups of high dimensional Lie groups whose metrics are not bi-invariant. In this paper, we determine the 
    isometry groups of a class of right invariant Riemannian metrics on the special unitary group $\mathrm{SU}(2^N)$ with arbitrarily large $N$.  These metrics have been used in the study of geometric quantum complexity. 
     
    In quantum computations, an algorithm involving $N$ qubits is represented by an element $U \in \mathrm{SU}(2^N)$. After choosing a set of elementary gates, which are special elements in $\mathrm{SU}(2^N)$,
    the {\it circuit complexity} of $U$ is defined to be the minimal number of elementary gates whose product is equal to $U$.
    While this definition provides an intuitive measure of the physical resources needed to implement a quantum algorithm, it is very difficult to compute circuit complexity.
    To solve this problem,
    Nielsen and his collaborators proposed a {\it geometric definition of quantum complexity} (see \cite{DN2006,gu2008quantum,N2006,nielsen2006optimal,nielsen2006quantum}). In Nielsen's approach, the complexity of $U$ is reformulated as the distance between $U$ and the identity element with respect to certain
    right invariant Riemannian metrics on $\mathrm{SU}(2^N)$. 
    Beyond quantum computations, Nielsen's geometric  approach to quantum complexity also has important applications in the study of holographic duality. In the context of the Complexity=Volume conjecture \cite{Susskind2016CV} and Complexity=Action conjecture \cite{Brown2016CA}, Nielsen's geometric approach provides natural models for the holographic complexity \cite{auzzi2021geometry,Brown2023Universality}. 
    
    A right invariant Riemannian metric on $\mathrm{SU}(2^N)$ is determined by 
    an inner product on its Lie algebra $\mathfrak{su}(2^N)$. 
    Let $\sigma_0$ be the $2 \times 2$ identity matrix and $\sigma_1, \sigma_2, \sigma_3$ be the
    standard $2\times 2$ Pauli matrices (see equation \eqref{eqn:Pauli} for a definition).
    A basis of $\mathfrak{su}(2^N)$ is given by {\it generalized Pauli matrices}
      \[ \tilde\sigma_I := - \sqrt{-1} \, \sigma_{i_1} \otimes \cdots \otimes \sigma_{i_N}, \]
    where $i_1, \ldots, i_N \in \{0, 1, 2, 3 \}$ and $I=(i_1, \ldots, i_N) \neq (0, \ldots, 0)$. 
    Each $\tilde\sigma_I$ acts naturally on $\left( \mathbb{C}^2 \right)^{\otimes N} \cong \mathbb{C}^{2^N}$, which is the Hilbert space in an $N$-qubit system.
    The {\it weight} (also called {\it locality}) of $\tilde\sigma_I$, denoted by $w(\tilde\sigma_I)$, is defined to be
    the number of $j$ such that $i_j \neq 0$. 
    In geometric quantum complexity, we need to choose a right invariant Riemannian metric $g$ on $\mathrm{SU}(2^N)$  such that
    $\tilde\sigma_I$ and $\tilde\sigma_J$ are orthogonal if $I \neq J$. 
    Such a metric is uniquely determined by positive constants
    $q_I := g(\tilde\sigma_I, \tilde\sigma_I)$, which are called the {\it penalty factor}  
    along direction $\tilde\sigma_I$. It is also common to choose $q_I = q_J$ if 
    $w(\tilde\sigma_I) = w(\tilde\sigma_J)$. In such cases, $g=g_{\bf q}$ is determined by
    $N$ positive constants ${\bf q}=(q_1, \cdots, q_N)$ such that $q_I = q_{w(\tilde\sigma_I)}$.
    We call $g_{\bf q}$ the {\it penalty metric} with penalty factors $q_1, \ldots, q_N$.    
    The main result of this paper is the following:
    
    \begin{theorem} \label{thm:mainIntro}
    Assume $N \geq 2$. Let $g$ be a penalty metric on $\mathrm{SU}(2^N)$ such that every $g$-preserving automorphism of
    $\mathrm{SU}(2^N)$  also preserves
    the subspace $\mathrm{span}_{\mathbb{R}}\{\tilde\sigma_I \in \mathfrak{su}(2^N) \mid w(\tilde\sigma_I) = 1\}$.
    Then the full isometry group of $g$ is isomorphic to 
    \begin{equation} \label{eqn:IsometryIsom}
    \mathrm{SU}(2^N)\rtimes \{ (\mathrm{SO}(3)^N \rtimes \mathbb{Z}_2) \rtimes S_N \}.
    \end{equation}
    \end{theorem}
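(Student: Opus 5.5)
Since $\mathrm{SU}(2^N)$ is compact and connected and $g$ is right invariant, we use the standard structure theory of isometry groups of invariant metrics on compact simple groups (Ozeki, Ochiai--Takahashi). The full isometry group $\mathrm{Isom}(G,g)$ is the semidirect product of the right translations $R_G\cong G$ with the isotropy group at the identity. Every isometry fixing $e$ is a group automorphism $\phi$ of $G$ whose differential preserves the inner product on $\mathfrak{su}(2^N)$. Hence
\[
\mathrm{Isom}(G,g)\cong G\rtimes \mathrm{Aut}(\mathfrak{su}(2^N),g_{\bf q}),
\]
where $\mathrm{Aut}(\mathfrak{su}(2^N),g_{\bf q})$ denotes the Lie algebra automorphisms preserving $g_{\bf q}$. (For $G$ simply connected, automorphisms of $G$ and of $\mathfrak g$ coincide.) So the whole problem reduces to computing this automorphism group and showing it is $(\mathrm{SO}(3)^N\rtimes\mathbb Z_2)\rtimes S_N$.

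First we exhibit this group inside the $g$-preserving automorphisms.
\begin{itemize}
\item $\mathrm{SO}(3)^N$ arises as $\mathrm{Ad}$ of local unitaries $\mathrm{SU}(2)^{\otimes N}$. Each factor rotates $(\sigma_1,\sigma_2,\sigma_3)$ at its site by $\mathrm{SO}(3)$ and fixes $\sigma_0$. It therefore maps the span of Pauli strings with a given support to itself orthogonally, and so preserves weights and $g_{\bf q}$. The kernel $\{\pm1\}^N$ of $\mathrm{SU}(2)^N\to\mathrm{SO}(3)^N$ intersected with the scalars is handled by taking the image, which is exactly $\mathrm{SO}(3)^N$.
\item $S_N$ permutes tensor factors.
\item $\mathbb Z_2$ is complex conjugation $X\mapsto\bar X$, the outer automorphism. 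It sends $\tilde\sigma_I\mapsto\pm\tilde\sigma_I$ and so preserves $g_{\bf q}$.
\end{itemize}
We then check the semidirect product relations. $S_N$ normalizes everything. Conjugation normalizes $\mathrm{SO}(3)^N$, acting by conjugating each rotation by $\mathrm{diag}(1,-1,1)$. We also verify that the resulting map to $\mathrm{Aut}(\mathfrak{su}(2^N))$ is injective.

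Conversely, let $\phi$ be a $g$-preserving automorphism. By hypothesis $\phi$ preserves the weight-one space $V_1=\bigoplus_{j}\mathfrak{su}(2)_j$, which is a Lie subalgebra isomorphic to $\mathfrak{su}(2)^{\oplus N}$. Hence $\phi|_{V_1}$ is an automorphism of $\mathfrak{su}(2)^N$. Such an automorphism permutes the simple ideals by some $\pi\in S_N$ and acts on each by an element of $\mathrm{SO}(3)$, since every automorphism of $\mathfrak{su}(2)$ is inner. Composing $\phi$ with the inverse of the corresponding element of $\mathrm{SO}(3)^N\rtimes S_N$, we may assume $\phi|_{V_1}=\mathrm{id}$. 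Then $\phi$ commutes with the representation of $\mathfrak{su}(2)^N$ on $\mathbb C^{2^N}$, i.e.\ with the local-unitary action on $\mathfrak{su}(2^N)$.

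If $\phi$ is inner, $\phi=\mathrm{Ad}_U$, then $U$ commutes up to scalars with the irreducible action of $\mathrm{SU}(2)^{\otimes N}$ on $(\mathbb C^2)^{\otimes N}$. By Schur's lemma $U$ is scalar and $\phi=\mathrm{id}$. If $\phi$ is outer, write $\phi=\mathrm{Ad}_U\circ c$ with $c$ complex conjugation. Since $c$ acts on $V_1$ as an element of $\mathrm{SO}(3)^N$ (namely $\sigma_2\mapsto-\sigma_2$ at each site, which is $\mathrm{Ad}$ of $\sigma_2^{\otimes N}$ up to sign on $V_1$), we can again reduce to the inner case. Specifically, the element $\mathrm{Ad}_{\sigma_2^{\otimes N}}\circ c$ fixes $V_1$ and is outer. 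This shows the stabilizer of $V_1$ pointwise is $\mathbb Z_2$, generated by that element, and completes the identification.

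\textbf{Main obstacle.} The delicate points are the $\mathbb Z_2$ bookkeeping and the semidirect-product structure. We must get the precise conjugation and sign conventions right so that the extension is a split semidirect product of the stated shape. We must also confirm that $\mathrm{Isom}$ has no extra components beyond $G\rtimes\mathrm{Aut}$. For the latter, we rely on the fact that an isometry fixing $e$ of a compact simple group with a left or right invariant metric is an automorphism, citing Ochiai--Takahashi. The hypothesis on $V_1$ is exactly what removes the harder problem of classifying all automorphisms preserving $g_{\bf q}$.
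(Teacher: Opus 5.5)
\textbf{Gap: the reduction to automorphisms.} Your computation of the $g$-preserving automorphisms is fine. The reduction at the start, which you yourself flag as the main obstacle, is not justified as written. You assert that every isometry of $(\mathrm{SU}(2^N),g)$ fixing $e$ is a group automorphism, citing Ochiai--Takahashi for compact simple groups with an arbitrary left or right invariant metric. In that generality the statement is false. For a bi-invariant metric the inversion $x\mapsto x^{-1}$ is an isometry fixing $e$, with differential $-\mathrm{id}$, and it is not an automorphism; indeed it conjugates $R(G)$ onto $L(G)$.

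What Ochiai--Takahashi provide (Proposition~\ref{simpleLiegp}) is only $\mathrm{Isom}(G,g)_0\subseteq R(G)L(G)$, which controls the identity component. The non-identity components of $\mathrm{Isom}_e(G,g)$ are exactly what the paper spends Sections 4--5 on:
\begin{itemize}
\item it embeds $\mathrm{Isom}_e$ in the normalizer of $\overline{\mathrm{Ad}}(\mathrm{SU}(2)^{\otimes N})$ in $\mathrm{O}(\mathfrak{su}(2^N),g)$;
\item it uses the curvature tensor to cut the centralizer $\Phi$ down to $\{\pm\mathrm{id},\pm\varphi_0\}$ (Lemma~\ref{curvature_restriction});
\item it excludes $-\mathrm{id}$ and $-\varphi_0$ by the classification of symmetric spaces (Lemma~\ref{lem:symmetric_condition}).
\end{itemize}
Non-bi-invariance of $g$ is essential at that last point. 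Your proposal never uses it, so the step cannot stand as stated.

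\textbf{How to close it.} The gap can be filled with a short argument, and your route then becomes a genuinely different and much shorter proof than the paper's.
\begin{itemize}
\item The hypothesis forces $g$ to be non-bi-invariant. A bi-invariant $g$ is preserved by every $\mathrm{Ad}_U$. By Lemma~\ref{preserve1local}, applied after differentiating $\mathrm{Ad}_{\exp tX}$, these cannot all preserve $W_1$ (your $V_1$) when $N\ge 2$.
\item By Proposition~\ref{simpleLiegp}, $\mathrm{Isom}(G,g)_0=R(G)L(K)$ with $K=\{x\in G \mid L_x\in\mathrm{Isom}(G,g)_0\}$ closed. Moreover $\dim K<\dim G$, since otherwise $K=G$ and $g$ would be bi-invariant.
\item Hence $\mathrm{Lie}(\mathrm{Isom}(G,g)_0)\cong\mathfrak g\oplus\mathfrak k$ with $\mathfrak k$ compact. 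The summand $\mathfrak g$ coming from $R(G)$ is its unique simple ideal of dimension $\dim\mathfrak g$, because the other simple ideals lie in $[\mathfrak k,\mathfrak k]$.
\item Conjugation by any isometry preserves $\mathrm{Isom}(G,g)_0$, hence this ideal, hence $R(G)$. So $R(G)$ is normal in $\mathrm{Isom}(G,g)$, and Lemma~\ref{normalsubgp} gives $\mathrm{Isom}_e(G,g)=\mathrm{Aut}(G,g)$.
\end{itemize}

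From there your argument works as written. You restrict to $W_1\cong\mathfrak{su}(2)^{\oplus N}$ and normalize $\phi|_{W_1}=\mathrm{id}$ using $\mathcal H\rtimes\overline{S}_N$. An inner $\mathrm{Ad}_U$ fixing $W_1$ pointwise has $U$ commuting with the associative algebra generated by $W_1$, which is all of $M_{2^N}(\mathbb C)$. So the pointwise stabilizer of $W_1$ in $\mathrm{Aut}(\mathfrak{su}(2^N))$ is $\{\mathrm{id},\varphi_0\}$, where $\varphi_0=\mathrm{Ad}_{\sigma_2^{\otimes N}}\circ c$. This replaces the paper's normalizer, centralizer, $\mathrm{Out}(\mathcal H)$, curvature and symmetric-space computations altogether.

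\textbf{Minor slip.} On $\mathfrak{su}(2)$, complex conjugation fixes $-i\sigma_2$ and negates $-i\sigma_1$ and $-i\sigma_3$; this is exactly the rotation $\mathrm{Ad}_{\sigma_2}$. Your description ``$\sigma_2\mapsto-\sigma_2$'' is its action on the Hermitian Paulis, which is a reflection. Your conclusion is unaffected.
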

    
    This theorem follows from Corollary \ref{cor:mainResult}. We would like to mention that most commonly used penalty metrics in geometric quantum complexity
    satisfy the condition in Theorem \ref{thm:mainIntro}. For example, the {\it cliff metrics}
    are the penalty metrics with penalty factors $q_1=q_2=1$ and $q_3= \cdots = q_N > 1$. 
    In \cite{nielsen2006optimal}, Nielsen and Dowling proved that the geometric quantum complexities defined by the cliff metrics are polynomially equivalent to the approximate circuit complexity when $q_3$ is sufficiently large. 
    In Proposition \ref{cliff_metric_preserve_locality}, we will show that cliff metrics satisfy the condition in Theorem \ref{thm:mainIntro}. In Proposition \ref{condition_isotropy}, we will show that all penalty metrics with
    $q_i \neq q_1$ for $i>1$  satisfy the condition in Theorem \ref{thm:mainIntro}. Such metrics include
    the binomial metrics and exponential metrics defined in Examples \ref{ex:Binomial} and \ref{ex:exponential}.
    In \cite{brown2022}, Brown showed that the geometric quantum  complexities defined by the
    binomial and exponential metrics with appropriate choices of parameters  
    are also polynomially equivalent to the approximate circuit complexity. 
    
    Theorem \ref{thm:mainIntro} implies that the full isometry group of the penalty metric $g$ is generated by the following four types of isometries:
    
    \begin{itemize}
    
    \item[(i)] Right translations by elements in $\mathrm{SU}(2^N)$ are isometries. 
    This gives the first factor in expression~\eqref{eqn:IsometryIsom}.
    
    \item[(ii)] Let $\mathrm{SU}(2)^{\otimes N}$ be the subgroup of $\mathrm{SU}(2^N)$ consisting of
    all $U=U_1 \otimes \cdots \otimes U_N$ with $U_i \in \mathrm{SU}(2)$. The action of $U$ on
     $v=v_1 \otimes \cdots \otimes  v_N \in (\mathbb{C}^2)^{\otimes N}$ is defined by
    \[
    U \cdot v \, :=  \, (U_1 \cdot v_1) \otimes \cdots \otimes ( U_N \cdot  v_N).
    \]
    The inner automorphisms on $\mathrm{SU}(2^N)$ given by adjoint actions of elements in $\mathrm{SU}(2)^{\otimes N}$ are also isometries. In Lemma \ref{lem:SU2SO3N}, we will show that
    the group of all such isometries is isomorphic to $\mathrm{SO}(3)^N$, which gives the second factor
    in expression \eqref{eqn:IsometryIsom}.
    
    \item[(iii)] Taking complex conjugation for elements in $\mathrm{SU}(2^N)$ is also an isometry.
    The group generated by this isometry gives the $\mathbb{Z}_2$ factor in expression \eqref{eqn:IsometryIsom}.
    
    \item[(iv)] For any permutation $\tau \in S_N$, we can define a linear map $\mathfrak{su}(2^N) \to \mathfrak{su}(2^N)$ which sends $\tilde\sigma_{(i_1, \ldots, i_N)}$ to
        $\tilde\sigma_{(i_{\tau(1)}, \ldots, i_{\tau(N)})}$. In Lemma \ref{lem:UijSN} and equation \eqref{eqn:P=SN}, we will show that such linear maps are generated by differentials of adjoint actions
        of certain elements $U_{kl} \in \mathrm{SU}(2^N)$ 
        defined by equation \eqref{SWAP}. Adjoint actions by $U_{kl}$
        are isometries. They generate the $S_N$ factor in expression \eqref{eqn:IsometryIsom}.

    \end{itemize}

    Isometries of types (i)-(iii) were constructed by Nielsen in his foundational work on the geometric quantum complexity \cite[Section IV. A]{N2006}. But the group structure of the full isometry group was not given in \cite{N2006}.  Nielsen also pointed out that "obtaining a complete classification
    of the isometries is an interesting problem".     
     Our results in this paper show that together with isometries in type (iv), we obtain a complete
     classification of all isometries for the commonly used metrics in geometric quantum complexity.
    Moreover, the group structure of the full isometry group is also clear from Theorem \ref{thm:mainIntro}.

    This paper is organized as follows: In Section 2, we review basic facts about geometry of general right invariant metrics on Lie groups and penalty metrics appeared in geometric quantum complexity. 
     In Section 3, we calculate the identity component of the isotropy group. 
     In Section 4, we narrow down possible elements in the isometry group 
      by calculating the normalizer of the identity component of the isotropy group realized as a subgroup in
      the orthogonal transformation group on $\mathfrak{su}(2^N)$. The proof of Theorem \ref{thm:mainIntro}
     will be completed in Section 5.

\section{Preliminaries}

    \subsection{Isometry groups of Lie groups with right invariant metrics}
    
    We first introduce basic notation and  translate some known results about left invariant metrics on
     Lie groups to right invariant metrics.

    Let $G$ be a Lie group. For any $x \in G$, let $R_x$ and $L_x$ be the right and left translations on $G$ defined by
    $R_x(y) := yx$ and $L_x(y) := xy$ for all $y \in G$.
    A Riemannian metric $g$ on $G$ is \textit{right} (or \textit{left}) \textit{invariant} if $R_x^*g = g$
    (or $L_x^*g = g$)  for all $x \in G$.
    Left invariant metrics have been extensively studied in mathematical literature (see, for example, \cite{M1976}).  However, physicists prefer to use right invariant metrics in the study of quantum complexity (see, for example, \cite{DN2006} and \cite{Brown2023Universality}). The following lemma can be used to translate standard results for left invariant metrics to results for right invariant metrics.

\begin{lemma}\label{left_right_relation}
    Let $G$ be a Lie group, and $\Theta$ the inversion map defined by $\Theta(x) := x^{-1}$ for all $x \in G$. Then a metric $g$ on $G$ is right invariant if and only if the pullback metric $\Theta^{*}g$ is left invariant. Furthermore, there is a
    group isomorphism
    \[
    \begin{array}{lccc}
    \Psi: & \mathrm{Isom}(G,\Theta^*g) & \to & \mathrm{Isom}(G,g), \\
                  & \psi & \mapsto & \Theta \circ \psi \circ \Theta,
    \end{array}
    \]
    where $\mathrm{Isom}(G,g)$ is the group of all isometries of the Riemannian manifold $(G,g)$.
\end{lemma}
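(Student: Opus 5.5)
The plan rests on two identities for the inversion map, $\Theta \circ \Theta = \mathrm{id}$ and $\Theta \circ L_x = R_{x^{-1}} \circ \Theta$. The second holds because $\Theta(L_x(y)) = (xy)^{-1} = y^{-1}x^{-1} = R_{x^{-1}}(\Theta(y))$. Since $\Theta$ is a diffeomorphism of $G$ and an involution, pulling back along it is a bijection on metrics with $\Theta^*\Theta^* g = g$.

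\emph{Invariance.} Suppose $g$ is right invariant. For any $x \in G$, functoriality of pullbacks and the second identity give
\[
L_x^*\Theta^* g = (\Theta\circ L_x)^* g = (R_{x^{-1}}\circ\Theta)^* g = \Theta^* R_{x^{-1}}^* g = \Theta^* g,
\]
so $\Theta^* g$ is left invariant. For the converse, suppose $\Theta^* g$ is left invariant. Applying $\Theta$ to both sides of $\Theta L_{x^{-1}} = R_x \Theta$ gives $\Theta \circ R_x = L_{x^{-1}} \circ \Theta$. The same computation then yields
\[
R_x^* g = R_x^* \Theta^*(\Theta^* g) = (\Theta\circ R_x)^*(\Theta^* g) = \Theta^* L_{x^{-1}}^*\Theta^* g = \Theta^*\Theta^* g = g,
\]
so $g$ is right invariant.

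\emph{The isomorphism $\Psi$.} First, $\Psi$ lands in $\mathrm{Isom}(G,g)$. Indeed, $\Theta$ is an isometry from $(G,g)$ to $(G,\Theta^* g)$ by the definition of pullback, and also from $(G,\Theta^* g)$ to $(G,g)$ because $\Theta^*\Theta^* g = g$. Hence if $\psi \in \mathrm{Isom}(G,\Theta^* g)$, then $\Theta\circ\psi\circ\Theta$ is a composition of isometries
\[
(G,g) \to (G,\Theta^* g) \to (G,\Theta^* g) \to (G,g).
\]
Concretely,
\[
(\Theta\psi\Theta)^* g = \Theta^*\psi^*\Theta^* g = \Theta^*\Theta^* g = g.
\]
Next, $\Psi$ is a homomorphism, since the middle $\Theta$'s cancel:
\[
\Psi(\psi_1\psi_2) = \Theta\psi_1\Theta\,\Theta\psi_2\Theta = \Psi(\psi_1)\Psi(\psi_2).
\]
Finally, $\Psi$ is bijective. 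The same argument with the roles of $g$ and $\Theta^* g$ swapped shows that $\phi \mapsto \Theta\circ\phi\circ\Theta$ maps $\mathrm{Isom}(G,g)$ into $\mathrm{Isom}(G,\Theta^* g)$. This map is a two-sided inverse of $\Psi$ because $\Theta^2 = \mathrm{id}$.

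None of these steps is a real obstacle. The only point needing care is the direction of the pullback when checking that $\Theta$ is an isometry in both directions, and that rests on the involution property $\Theta^*\Theta^* g = g$.
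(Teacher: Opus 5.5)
Your proposal is correct and follows the paper's proof essentially step for step: the identity $\Theta\circ L_x = R_{x^{-1}}\circ\Theta$ (equivalently $R_x=\Theta\circ L_{x^{-1}}\circ\Theta$) drives both directions of the invariance claim, and the isomorphism comes from $\Theta=\Theta^{-1}$ being an isometry between $(G,g)$ and $(G,\Theta^*g)$. The only difference is that you spell out the homomorphism and two-sided-inverse checks, which the paper leaves implicit.
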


    \noindent
    {\bf Proof}:
    First observe that for every $y \in G$,
    \begin{equation} \label{eqn:LRTrans}
    \Theta \circ L_y = R_{y^{-1}}\circ \Theta.
    \end{equation}
    If $g$ is right invariant, then
    \[ L_y^*(\Theta^*g) = (\Theta \circ L_y)^* g = (R_{y^{-1}} \circ \Theta)^*g = \Theta^*(R_{y^{-1}}^*g) = \Theta^*g. \]
    So $\Theta^*g$ is left invariant. On the other hand, if $\Theta^*g$ is left invariant, then
    \[ R_y^* g = (\Theta \circ L_{y^{-1}} \circ \Theta)^* g = \Theta^* L_{y^{-1}}^* (\Theta^* g)
        = \Theta^* (\Theta^* g) = (\Theta^2)^*g  = g. \]
    Hence $g$ is right invariant.

    Moreover, since $\Theta = \Theta^{-1}$ is an isometry from $(G,\Theta^*g)$ to $(G,g)$ , it follows that $\Psi$
    is a group isomorphism from $\mathrm{Isom}(G,\Theta^*g)$ to $\mathrm{Isom}(G,g)$.
    $\Box$

     In this paper, we will study $\mathrm{Isom}(G,g)$ where $G$ is a connected Lie group equipped with a right invariant metric $g$.  Let
    \[ R(G):=\{R_x \mid x \in G\}, \hspace{20pt} L(G):=\{L_x \mid x \in G\}.\]
    Since $g$ is right invariant, we have $R(G) \subseteq \mathrm{Isom}(G,g)$.
    Let $\mathrm{Isom}_e(G,g)$ be the isotropy subgroup of $\mathrm{Isom}(G,g)$ at the identity element $e \in G$.
    Then we have
    \begin{equation}\label{eq:isom_decomposition}
    \mathrm{Isom}(G,g) = R(G) \cdot \mathrm{Isom}_e(G,g),  \hspace{20pt}
    R(G) \cap \mathrm{Isom}_e(G,g)=\{\mathrm{Id}\},
    \end{equation}
    where $\mathrm{Id}$ is the identity map from $G$ to itself.
    In general, the group structures of $R(G)$ and $\mathrm{Isom}_e(G,g)$ do not completely determine the group structure of $\mathrm{Isom}(G,g)$. But if $R(G)$ is a normal subgroup of $\mathrm{Isom}(G,g)$, then $\mathrm{Isom}(G,g)$ is isomorphic to the semidirect product of $R(G)$ and $\mathrm{Isom}_e(G,g)$ (see, for example,   \cite[Exercise 12 on page 76]{L2002}). In this case, we write
    \begin{equation} \label{eqn:semiprod}
    \mathrm{Isom}(G,g) =  R(G) \rtimes \mathrm{Isom}_e(G,g) \cong G \rtimes \mathrm{Isom}_e(G,g).
    \end{equation}
    The following lemma gives a criterion for $R(G)$ to be a normal subgroup of $\mathrm{Isom}(G,g)$. It follows from
    Lemma~\ref{left_right_relation} and analogous results for left invariant metrics given in \cite[Lemma~2.2]{ha2012isometry}
    and \cite[Lemma~1.1]{Shin}.

\begin{lemma}\label{normalsubgp}
    Let $G$ be a Lie group endowed with a right invariant metric $g$. Then $R(G)$ is a normal subgroup of $\mathrm{Isom}(G,g)$ if and only if $\mathrm{Isom}_e(G,g) \subseteq \mathrm{Aut}(G)$, where $\mathrm{Aut}(G)$ is the group of automorphisms of $G$. Furthermore, in this case, equation \eqref{eqn:semiprod} holds.
\end{lemma}

\begin{remark}
Define $\mathrm{Aut}(G,g) := \mathrm{Aut}(G) \cap \mathrm{Isom}(G,g)$.
Then by Lemma \ref{normalsubgp}, $R(G)$ is a normal subgroup of $\mathrm{Isom}(G,g)$ if and only if
$\mathrm{Isom}_e(G,g) = \mathrm{Aut}(G, g)$.
\end{remark}

For any Lie group $K$, we will use $K_0$ to denote the identity component of $K$.
The following proposition is a right invariant analogue of Ochiai and Takahashi's result \cite[Theorem~1]{Ochiai1976}.
\begin{proposition}\label{simpleLiegp}
    Let $G$ be a compact connected simple Lie group equipped with a right invariant metric  $g$. Then
    \[
    \mathrm{Isom}(G,g)_0 \subseteq R(G)L(G).
    \]
\end{proposition}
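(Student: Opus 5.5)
We will deduce the statement from Ochiai and Takahashi's theorem for left invariant metrics, using the conjugation by $\Theta$ from Lemma \ref{left_right_relation}. That theorem \cite[Theorem~1]{Ochiai1976} says: if $G$ is a compact connected simple Lie group and $h$ is a left invariant metric on $G$, then $\mathrm{Isom}(G,h)_0 \subseteq L(G)R(G)$.

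Step 1 is to set $h := \Theta^*g$. By Lemma \ref{left_right_relation}, $h$ is left invariant, and the map $\Psi(\psi) = \Theta\circ\psi\circ\Theta$ is a group isomorphism $\mathrm{Isom}(G,h)\to\mathrm{Isom}(G,g)$.

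Step 2 is to check that $\Psi$ sends identity components to identity components, i.e. $\Psi(\mathrm{Isom}(G,h)_0) = \mathrm{Isom}(G,g)_0$. Both isometry groups carry the compact-open topology, in which they are Lie groups by Myers–Steenrod. Conjugation by the fixed diffeomorphism $\Theta$ is a homeomorphism for this topology, since composition with a fixed continuous map is continuous in the compact-open topology. Hence $\Psi$ is an isomorphism of topological groups and preserves identity components. Alternatively, one can argue at the level of Killing fields: $\Theta$ pushes $h$-Killing fields to $g$-Killing fields, and the connected isometry groups are generated by their flows. Either route suffices.

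Step 3 applies the cited theorem. For $\phi\in\mathrm{Isom}(G,g)_0$ we have $\phi = \Psi(\psi)$ with $\psi\in\mathrm{Isom}(G,h)_0$, so $\psi = L_a R_b$ for some $a,b\in G$. Using equation \eqref{eqn:LRTrans}, $\Theta\circ L_a = R_{a^{-1}}\circ\Theta$, together with its companion identity $\Theta\circ R_b = L_{b^{-1}}\circ\Theta$ (checked directly: $(yb)^{-1} = b^{-1}y^{-1}$), we get
\[
\Theta\circ L_a R_b\circ\Theta = R_{a^{-1}}\, L_{b^{-1}}\, \Theta\circ\Theta = R_{a^{-1}} L_{b^{-1}} \in R(G)L(G).
\]
Left and right translations commute, so the order of the factors does not matter. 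This gives $\mathrm{Isom}(G,g)_0 \subseteq R(G)L(G)$.

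The only genuinely nontrivial input is the Ochiai–Takahashi theorem itself, which we cite rather than reprove; its proof uses the structure of the Killing algebra of a compact simple group. Within our argument, the one point needing care is Step 2, the compatibility of $\Psi$ with identity components, and it is routine once the topology on the isometry groups is fixed as above.
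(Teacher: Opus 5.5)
Your proposal is correct and follows the paper's proof: pull back by the inversion $\Theta$ to the left invariant metric $\Theta^*g$, apply Ochiai--Takahashi, then conjugate back using $\Theta\, L(G)\, \Theta = R(G)$ and $\Theta\, R(G)\, \Theta = L(G)$. Your Step 2 addresses something the paper uses without comment, namely that conjugation by $\Theta$ carries identity components to identity components. Your justification of that point is fine.
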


    \noindent
    {\bf Proof}: Let $\Theta$ be the inversion map on $G$. By Lemma \ref{left_right_relation},
    $\Theta^* g$ is a left invariant metric on $G$. Hence
    \cite[Theorem~1]{Ochiai1976} implies that   $\mathrm{Isom}(G, \Theta^* g)_0 \subseteq L(G)R(G)$.
    By equation \eqref{eqn:LRTrans}, $\Theta \, L(G) \, \Theta = R(G)$ and  $\Theta \, R(G) \, \Theta = L(G)$.
    So by  Lemma \ref{left_right_relation}, we have
    \[ \mathrm{Isom}(G,g)_0 \, = \, \Theta \, \mathrm{Isom}(G, \Theta^* g)_0 \, \Theta
        \, \subseteq \, (\Theta \, L(G) \, \Theta) \, (\Theta \, R(G) \, \Theta ) \, = \, R(G)L(G).\]
      $\Box$

    Proposition \ref{simpleLiegp} gives an important tool to compute $\mathrm{Isom}(G,g)_0$. To apply this proposition,
      we first clarify the relationship between $\mathrm{Isom}_e(G,g)_0$ and the isotropy subgroup of $\mathrm{Isom}(G,g)_0$ at $e$.

    \begin{lemma}\label{identity_component_isotropy_right}
    Let $G$ be a connected Lie group equipped with a right invariant Riemannian metric $g$. Then
    \[
    \mathrm{Isom}_e(G,g)_0 = [\mathrm{Isom}(G,g)_0]_e,
    \]
    where the left-hand side denotes the identity component of $\mathrm{Isom}_e(G,g)$, and the right-hand side denotes the isotropy subgroup of $\mathrm{Isom}(G,g)_0$ at $e$.
    \end{lemma}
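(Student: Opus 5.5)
Write $I := \mathrm{Isom}(G,g)$, $H := \mathrm{Isom}_e(G,g)$, and $I_0$ for the identity component of $I$. The lemma asks for $H_0 = (I_0)_e$, where $(I_0)_e = I_0 \cap H$. The plan is to prove the two inclusions separately, using throughout that $I$ is a Lie group acting smoothly and properly on $G$ (Myers--Steenrod), with $H$ a closed subgroup.

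For $H_0 \subseteq (I_0)_e$: $H_0$ is a connected subgroup of $I$ containing $\mathrm{Id}$, so $H_0 \subseteq I_0$. Every element of $H_0$ fixes $e$, hence $H_0 \subseteq I_0 \cap H = (I_0)_e$.

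For $(I_0)_e \subseteq H_0$, the plan is to show that $(I_0)_e$ is connected. It is clearly a subgroup of $H$ containing $\mathrm{Id}$, so connectedness gives the inclusion.
\begin{itemize}
\item First, $I_0$ acts transitively on $G$. Indeed, $R(G) \cong G$ is connected, since $x \mapsto R_x$ is continuous and $G$ is connected. Hence $R(G) \subseteq I_0$, and $R(G)$ is already transitive on $G$.
\item Consequently the orbit map $I_0 \to G$, $\phi \mapsto \phi(e)$, induces a diffeomorphism $I_0/(I_0)_e \cong G$.
\item There is a smooth section $s: G \to I_0$, $s(x) := R_x$, satisfying $s(x)(e) = x$.
\item This yields a homeomorphism $G \times (I_0)_e \to I_0$, $(x,k) \mapsto R_x \circ k$. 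Its inverse is $\phi \mapsto \bigl(\phi(e),\, R_{\phi(e)}^{-1}\circ\phi\bigr)$, and the second component lies in $(I_0)_e$ because $R_{\phi(e)} \in I_0$. This is exactly the decomposition \eqref{eq:isom_decomposition} restricted to $I_0$.
\end{itemize}
Since $I_0$ is connected and homeomorphic to $G \times (I_0)_e$, the factor $(I_0)_e$ is connected, being the image of $I_0$ under a continuous projection. Therefore $(I_0)_e \subseteq H_0$.

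The main obstacle is the topological step: ensuring the bijection $G\times (I_0)_e\to I_0$ is a homeomorphism in the compact-open (Lie group) topology. Continuity in both directions uses only continuity of evaluation at $e$, of $x\mapsto R_x$, and of composition and inversion in the Lie group $I$, all standard from Myers--Steenrod. An alternative is to invoke the long exact sequence of the fibration $(I_0)_e \to I_0 \to G$, where $\pi_0$ of the fiber is a quotient of $\pi_1(G)$. I would avoid that route, because it alone does not give connectedness; it is the global section provided by $R(G)$ that makes the bundle trivial and yields connectedness of the fiber directly.
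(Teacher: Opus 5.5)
Your proof is correct and is essentially the paper's argument: both rest on the global section $x\mapsto R_x$, which makes $(x,k)\mapsto R_x\circ k$ a homeomorphism (the paper says diffeomorphism) onto the isometry group. The difference is cosmetic: the paper applies this map to all of $R(G)\times\mathrm{Isom}_e(G,g)$ and reads off $\mathrm{Isom}(G,g)_0=R(G)\cdot\mathrm{Isom}_e(G,g)_0$, while you restrict it to $\mathrm{Isom}(G,g)_0$ and obtain connectedness of $(I_0)_e$ from the continuous retraction $\phi\mapsto R_{\phi(e)}^{-1}\circ\phi$.
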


    \noindent
    {\bf Proof}:
    Define a map $\Xi: R(G) \times \mathrm{Isom}_e(G,g) \longrightarrow \mathrm{Isom}(G,g)$  by
     $\Xi(R_x, \phi) = R_x \circ \phi$
    for $x \in G$ and $\phi \in \mathrm{Isom}_e(G,g)$.
    By equation \eqref{eq:isom_decomposition}, $\Xi$ is a bijection with inverse given by
     $\Xi^{-1}(\psi) = (R_{\psi(e)}, R_{\psi(e)^{-1}} \circ \psi)$ for $\psi \in \mathrm{Isom}(G,g)$.
     It is obvious that both $\Xi$ and $\Xi^{-1}$
    are smooth maps. Hence $\Xi$ is a diffeomorphism from $R(G) \times \mathrm{Isom}_e(G,g)$ to $\mathrm{Isom}(G,g)$.
     Since $R(G) \cong G$ is connected,
    we have
    \[
    \mathrm{Isom}(G,g)_0 = R(G) \cdot  \mathrm{Isom}_e(G,g)_0.
    \]
    Hence the isotropy group of $\mathrm{Isom}(G,g)_0$ at $e$ is equal to $\mathrm{Isom}_e(G,g)_0$.
    This completes the proof of the lemma.
    $\Box$

    An immediate consequence of Proposition \ref{simpleLiegp}, Lemma \ref{identity_component_isotropy_right},
    and an argument similar to Lemma \ref{normalsubgp} (see also \cite[Theorem 2]{Ochiai1976}) is the following: 
    \begin{corollary} \label{cor:simple-component}
    If $G$ is a compact connected simple Lie group equipped with a right invariant Riemannian metric $g$, then
    \begin{equation}\label{equ:isometry_automorphism}
    \mathrm{Isom}_e(G,g)_0 = \mathrm{Aut}(G,g)_0.
    \end{equation}
    Moreover, $R(G)$ is a normal subgroup of $\mathrm{Isom}(G,g)_0$ and
    \[ \mathrm{Isom}(G,g)_0 = R(G) \rtimes \mathrm{Isom}_e(G,g)_0. \]
    \end{corollary}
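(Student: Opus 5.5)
The plan is to combine Proposition \ref{simpleLiegp} with Lemma \ref{identity_component_isotropy_right}. First we prove the equality \eqref{equ:isometry_automorphism}.

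For the inclusion $\mathrm{Aut}(G,g)_0 \subseteq \mathrm{Isom}_e(G,g)_0$, note that every automorphism fixes $e$, so $\mathrm{Aut}(G,g) \subseteq \mathrm{Isom}_e(G,g)$. This is a closed subgroup, so its identity component lies in $\mathrm{Isom}_e(G,g)_0$.

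For the reverse inclusion, take $\phi \in \mathrm{Isom}_e(G,g)_0$. By Lemma \ref{identity_component_isotropy_right}, $\phi \in \mathrm{Isom}(G,g)_0$, and then Proposition \ref{simpleLiegp} gives $\phi = R_x L_y$ for some $x,y \in G$. Evaluating at $e$ gives $yx = e$, so $x = y^{-1}$ and $\phi(z) = y z y^{-1}$. Thus $\phi$ is an inner automorphism, so $\phi \in \mathrm{Aut}(G) \cap \mathrm{Isom}(G,g) = \mathrm{Aut}(G,g)$. Hence $\mathrm{Isom}_e(G,g)_0 \subseteq \mathrm{Aut}(G,g)$. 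Since $\mathrm{Isom}_e(G,g)_0$ is connected and contains $\mathrm{Id}$, it lies in $\mathrm{Aut}(G,g)_0$. One should make sure that the topology on $\mathrm{Aut}(G,g)$ is the subspace topology from $\mathrm{Isom}(G,g)$, the compact-open topology. Then a connected subset containing $\mathrm{Id}$ lies in the identity component, and no other topology subtlety arises.

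For normality, the proof of Lemma \ref{identity_component_isotropy_right} gives $\mathrm{Isom}(G,g)_0 = R(G)\cdot \mathrm{Isom}_e(G,g)_0$. It therefore suffices to check that conjugation by $\phi \in \mathrm{Isom}_e(G,g)_0$ preserves $R(G)$; conjugation by elements of $R(G)$ obviously does. Since $\phi$ is an automorphism, for any $z \in G$ we have
\[
\phi R_x \phi^{-1}(z) = \phi(\phi^{-1}(z)x) = z\,\phi(x),
\]
so $\phi R_x \phi^{-1} = R_{\phi(x)} \in R(G)$. Hence $R(G)$ is normal in $\mathrm{Isom}(G,g)_0$. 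Combined with $R(G)\cap \mathrm{Isom}_e(G,g)_0 = \{\mathrm{Id}\}$ from \eqref{eq:isom_decomposition}, this gives the semidirect product decomposition, as in Lemma \ref{normalsubgp}.

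The only step needing care is the evaluation $\phi(e)=e$ forcing $x=y^{-1}$; this is immediate. There is no real obstacle beyond confirming that Proposition \ref{simpleLiegp} applies, which requires $G$ compact, connected and simple.
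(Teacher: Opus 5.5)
Your proposal is correct and follows the route the paper indicates. You combine Proposition \ref{simpleLiegp} with Lemma \ref{identity_component_isotropy_right} to see that every element of $\mathrm{Isom}_e(G,g)_0$ has the form $R_{y^{-1}}L_y$, hence is an inner automorphism, and you get normality from $\phi R_x \phi^{-1} = R_{\phi(x)}$ as in Lemma \ref{normalsubgp}; note that closedness of $\mathrm{Aut}(G,g)$ is not needed for the inclusion $\mathrm{Aut}(G,g)_0 \subseteq \mathrm{Isom}_e(G,g)_0$, only that $\mathrm{Aut}(G,g)_0$ is connected and contains $\mathrm{Id}$.
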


By equation \eqref{equ:isometry_automorphism}, to study $\mathrm{Isom}_e(G,g)_0$, it suffices to study automorphisms of
$G$ which are also isometries with respect to  the metric $g$.
The following lemma is the right invariant analogue of a well known fact for left invariant metrics.
    \begin{lemma}\label{automorphism_isometry}
    Let $G$ be a Lie group equipped with a right invariant Riemannian metric $g$. Then $\varphi \in \mathrm{Aut}(G)$ is an isometry of $(G, g)$ if and only if its differential at the identity, $\varphi_{*e}: (T_eG, g|_e) \to (T_eG, g|_e)$, is a linear isometry.
    \end{lemma}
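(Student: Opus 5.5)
The plan is to prove the two implications separately. The only ingredients are the homomorphism property of $\varphi$ and the right invariance of $g$. The key step is to show that the differential of $\varphi$ at an arbitrary point $x \in G$ is conjugate, through right translations, to $\varphi_{*e}$.

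Fix $x \in G$. Because $\varphi$ is a homomorphism, $\varphi(yx) = \varphi(y)\varphi(x)$ for all $y \in G$, which says
\[
\varphi \circ R_x = R_{\varphi(x)} \circ \varphi .
\]
Differentiating this identity at $e$ gives
\[
\varphi_{*x} \circ (R_x)_{*e} = (R_{\varphi(x)})_{*e} \circ \varphi_{*e},
\]
that is,
\[
\varphi_{*x} = (R_{\varphi(x)})_{*e} \circ \varphi_{*e} \circ (R_x)_{*e}^{-1}.
\]
Right invariance of $g$ means that $(R_x)_{*e} \colon (T_eG, g|_e) \to (T_xG, g|_x)$ is a linear isometry for every $x$. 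The same holds for $(R_{\varphi(x)})_{*e}$, with target $T_{\varphi(x)}G$.

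For the ``if'' direction, suppose $\varphi_{*e}$ is a linear isometry of $(T_eG, g|_e)$. Then $\varphi_{*x}$ is a composition of three linear isometries, so it is a linear isometry from $(T_xG, g|_x)$ to $(T_{\varphi(x)}G, g|_{\varphi(x)})$. This holds at every $x$, so $\varphi^* g = g$. Since $\varphi$ is a diffeomorphism, being an automorphism of the Lie group, it is an isometry of $(G,g)$.

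For the ``only if'' direction, suppose $\varphi$ is an isometry. Then $\varphi_{*e}$ is a linear isometry from $(T_eG, g|_e)$ to $(T_{\varphi(e)}G, g|_{\varphi(e)})$. Since $\varphi(e) = e$, this is exactly the required statement.

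I do not expect a genuine obstacle here. The only point requiring care is to write the intertwining relation with right translations rather than left ones, since $g$ is right invariant. One could alternatively transport the well-known left invariant statement through Lemma~\ref{left_right_relation}: note that $\Theta \circ \varphi \circ \Theta = \varphi$ for an automorphism and that $\Theta_{*e} = -\mathrm{Id}$. The direct argument above is shorter, so I would use it.
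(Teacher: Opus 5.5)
Your proposal is correct and matches the paper's proof: both differentiate the identity $\varphi \circ R_x = R_{\varphi(x)} \circ \varphi$ at $e$ to get $\varphi_{*x} = (R_{\varphi(x)})_{*e} \circ \varphi_{*e} \circ (R_x)_{*e}^{-1}$. Both then conclude from right invariance that $\varphi_{*x}$ is a linear isometry at every $x$ exactly when $\varphi_{*e}$ is.
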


    \noindent
    {\bf Proof}:
    Since $\varphi$ is a group automorphism, we have
    $\varphi \circ R_x = R_{\varphi(x)} \circ \varphi $
    for all $x \in G$.
    Taking the differential of both sides at the identity $e$, we obtain
    $ \varphi_{*x}  = (R_{\varphi(x)})_{*e} \circ \varphi_{*e} \circ (R_{x})_{*e}^{-1} $.
    If $g$ is right invariant, then for every $x \in G$, $\varphi_{*x}$ is a linear isometry if and only if
    $\varphi_{*e}$ is a linear isometry. The lemma is thus proved.
    $\Box$

\begin{remark}
After choosing a basis of $T_eG$, we can represent $\varphi_{*e}$ by a matrix $[\varphi]$ and represent
 $g|_e$ by a matrix $A$.
Lemma \ref{automorphism_isometry} reduces the computation for $\mathrm{Aut}(G,g)$ to solving a matrix equation:
\begin{equation}\label{tangent_equation}
    [\varphi]^T A [\varphi] = A.
\end{equation}
For Lie groups of low dimensions, this equation might be solved directly.
For example,  this method was used in \cite{ha2012isometry, Ayad2024unimodular, Ayad2025nonunimodular} to compute
isometry groups of left invariant metrics on some $3$ and $4$ dimensional Lie groups. In this paper, we will compute
isometry groups of Lie groups with arbitrarily large dimension. In such cases,  it is quite difficult to solve
equation \eqref{tangent_equation}.
Instead, we will use the fact that isometries preserve the curvature tensor to study ${\rm Isom}(G, g)$. This method was used in
\cite{CR} to study isometry groups of 3-dimensional Lie groups.
\end{remark}

In this paper, we will focus on the special unitary group ${\rm SU}(n)$. It is well known that the automorphism group of this Lie group is given by
    \begin{equation} \label{eqn:AutSU(n)}
    \mathrm{Aut}(\mathrm{SU}(n)) \cong
                \begin{cases}\mathrm{PSU}(2) \cong \mathrm{SO}(3), & {\rm if \,\,\,} n = 2, \\
                            \mathrm{PSU}(n) \rtimes \mathbb{Z}_2, & {\rm if \,\,\,} n \geq 3, \end{cases}
    \end{equation}
    \noindent where $\mathrm{PSU}(n)$ is the quotient of $\mathrm{SU}(n)$ by its center.
    In fact, $\mathrm{PSU}(n)$ is isomorphic to the inner automorphism group of $\mathrm{SU}(n)$, and
     $\mathbb{Z}_2$ is isomorphic
     to the outer automorphism group of $\mathrm{SU}(n)$ which is generated by the complex conjugation. Equation \eqref{eqn:AutSU(n)} follows from the discussion of automorphisms of  ${\rm SU}(n)$
    in \cite[Page 102]{Loos} and a general result about automorphism groups of compact Lie groups in \cite[Theorem 6.73]{HM}.

    \subsection{Geometric Quantum  Complexity}

    In this subsection, we introduce some basic notions from quantum computation and the standard models used in geometric quantum complexity. We refer to the book \cite{NC} for basic terminologies and facts in quantum computation.

    First, the Hilbert space of a single qubit is $\mathbb{C}^2$. The Lie group ${\rm SU}(2)$ acts on this space via matrix
    multiplications. A standard basis of the Lie algebra $\mathfrak{su}(2)$ is given by $\{ - i \sigma_j \mid j=1, 2, 3\}$, where
    $i = \sqrt{-1}$ and  $\sigma_j$ are standard {\it Pauli matrices} defined by
    \begin{equation} \label{eqn:Pauli}
    \sigma_{1} = \begin{bmatrix}0 & 1 \\ 1 & 0 \end{bmatrix},\quad
    \sigma_{2} = \begin{bmatrix}0 & -i\\ i & 0\end{bmatrix},\quad
    \sigma_{3} = \begin{bmatrix}1 & 0\\ 0 & -1 \end{bmatrix}.
    \end{equation}
    For convenience, we also set
    $\sigma_{0}$ to be the $2 \times 2$ identity matrix.

    For any positive integer $N$, the Hilbert space for $N$ qubits is the $N$-fold tensor product  $(\mathbb{C}^2)^{\otimes N} \cong \mathbb{C}^{2^N}$. An algorithm in quantum computation can be represented by an element in the Lie group $\mathrm{SU}(2^N)$ which acts on $\mathbb{C}^{2^N}$ by matrix multiplication.
    For any $2 \times 2$ complex matrices $A_1, \ldots, A_N$, we define a linear operator
    $A_1 \otimes \cdots \otimes A_N$ which acts on $(\mathbb{C}^2)^{\otimes N}$ by
    \[ (A_1 \otimes \cdots \otimes A_N) \, \cdot \, (v_1 \otimes \cdots \otimes v_N)
        = (A_1 \cdot v_1) \otimes \cdots \otimes (A_N \cdot v_N) \]
    for all $v_1, \ldots, v_N \in \mathbb{C}^2$. Let
    \begin{equation} \label{eqn:I_N}
    \mathcal{I}_N := \{ (i_1, \dots, i_N) \mid i_1, \ldots, i_N \in \{0,1,2,3\} \}, \hspace{20pt}
    \mathcal{I}_N^0 := \mathcal{I}_N \setminus \{(0,  \ldots, 0)\}.
    \end{equation}
    For any $I =(i_1, \ldots, i_N) \in \mathcal{I}_N$, define
    \[ \sigma_I := \sigma_{i_1} \otimes \cdots \otimes \sigma_{i_N}. \]
    Matrices $\sigma_I$ are called  the \textit{generalized Pauli matrices}.
    Commutators $[\sigma_I,\sigma_J] := \sigma_I\sigma_J - \sigma_J\sigma_I$ of such matrices can be computed by the following:

    \begin{lemma}\label{Paulimainthm}
    Fix a positive integer $N$. For $I = (i_1, \ldots, i_N), J = (j_1, \ldots, j_N) \in \mathcal{I}_N$, let $s(I, J)$ be the number of $k$ such that
    $[\sigma_{i_k},\sigma_{j_k}] \neq 0$. Then
    \begin{equation} \label{eqn:bracketGP}
    [\sigma_I,\sigma_J] = \left( 1 - (-1)^{s(I, J)} \right) \sigma_I\sigma_J.
    \end{equation}
    Furthermore, if $[\sigma_I,\sigma_J] \neq 0$, there must exist $K \in \mathcal{I}_N^0$ and $\epsilon_{IJ} \in \{1,-1\}$ such that
    \begin{equation} \label{eqn:bracketGPnz}
    [\sigma_I, \sigma_J]  = 2 \epsilon_{IJ} \, i\sigma_K, \quad
    [\sigma_J, \sigma_K] = 2 \epsilon_{IJ} \, i\sigma_I, \quad
    [\sigma_K, \sigma_I] = 2 \epsilon_{IJ} \, i\sigma_J.
    \end{equation}
    \end{lemma}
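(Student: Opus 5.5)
The plan is to reduce everything to the single-qubit Pauli algebra via the mixed-product property of tensor products. For $I,J\in\mathcal{I}_N$ we have
\[
\sigma_I\sigma_J = (\sigma_{i_1}\sigma_{j_1})\otimes\cdots\otimes(\sigma_{i_N}\sigma_{j_N}).
\]
The key one-qubit facts, checked directly from \eqref{eqn:Pauli}, are these. If $[\sigma_a,\sigma_b]=0$ then $\sigma_b\sigma_a=\sigma_a\sigma_b$. Otherwise $a,b\in\{1,2,3\}$ with $a\neq b$, so $\sigma_b\sigma_a=-\sigma_a\sigma_b$. In either case $\sigma_a\sigma_b=\lambda\,\sigma_c$ with $\lambda\in\{1,\pm i\}$ and $c\in\{0,1,2,3\}$. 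Specifically, $\sigma_a\sigma_a=\sigma_0$, $\sigma_0\sigma_b=\sigma_b$, and $\sigma_a\sigma_b=i\epsilon_{abc}\sigma_c$ for distinct nonzero $a,b$.

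Applying this factorwise gives $\sigma_J\sigma_I=(-1)^{s(I,J)}\sigma_I\sigma_J$, since exactly $s(I,J)$ factors anticommute and the rest commute. Subtracting then gives \eqref{eqn:bracketGP} immediately.

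For the second part, suppose $[\sigma_I,\sigma_J]\neq 0$. Then $s=s(I,J)$ is odd and $[\sigma_I,\sigma_J]=2\sigma_I\sigma_J$. Factorwise we obtain $\sigma_I\sigma_J=\mu\,\sigma_K$ for a unique $K\in\mathcal{I}_N$. Here $\mu$ is a product of $s$ factors $\pm i$, coming from the anticommuting positions, and otherwise factors equal to $1$. Hence $\mu=\pm i^{s}$. Since $s$ is odd, $\mu=\epsilon\, i$ with $\epsilon\in\{\pm1\}$; set $\epsilon_{IJ}=\epsilon$. Also $K\neq(0,\ldots,0)$, because at an anticommuting position $k$ the index $k_k$ is the third nonzero index. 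This gives $[\sigma_I,\sigma_J]=2\epsilon_{IJ}i\sigma_K$.

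To get the other two relations, use that every $\sigma_I$ is Hermitian and squares to the identity. Multiply $\sigma_I\sigma_J=\epsilon i\sigma_K$ on the left by $\sigma_I$ to get $\sigma_J=\epsilon i\,\sigma_I\sigma_K$, so $\sigma_I\sigma_K=-\epsilon i\sigma_J$. Multiply instead on the right by $\sigma_J$ to get $\sigma_I=\epsilon i\,\sigma_K\sigma_J$, so $\sigma_K\sigma_J=-\epsilon i\sigma_I$.

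Next we need $s(J,K)$ and $s(K,I)$ to be odd. Since $\sigma_K=-\epsilon i\,\sigma_I\sigma_J$, we compute $\sigma_K\sigma_I=-\epsilon i\,\sigma_I\sigma_J\sigma_I=-\epsilon i(-1)^s\sigma_I\sigma_I\sigma_J=\epsilon i\,\sigma_J$. Likewise $\sigma_I\sigma_K=-\epsilon i\sigma_J=-\sigma_K\sigma_I$, so $\sigma_K$ and $\sigma_I$ anticommute. Therefore $[\sigma_K,\sigma_I]=2\sigma_K\sigma_I=2\epsilon i\sigma_J$. The same computation gives $\sigma_J\sigma_K=-\sigma_K\sigma_J=\epsilon i\sigma_I$, hence $[\sigma_J,\sigma_K]=2\epsilon i\sigma_I$.

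The only delicate point is the sign bookkeeping: the same $\epsilon_{IJ}$ must appear in all three cyclic relations. The manipulation above using $\sigma_I^2=\sigma_J^2=\mathrm{Id}$ handles this without tracking individual Levi-Civita signs. I expect no real obstacle beyond that careful check.
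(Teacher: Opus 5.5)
Your proof is correct. The first half matches the paper's argument: the mixed-product property, counting the $s(I,J)$ anticommuting tensor factors, and reading off $\sigma_I\sigma_J=\mu\,\sigma_K$ with $\mu=\pm i$ and $K\neq(0,\ldots,0)$.

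Where you differ is in obtaining the two cyclic relations. The paper works factorwise. It notes that each one-qubit identity $\sigma_{i_m}\sigma_{j_m}=a_m\sigma_{k_m}$ still holds, with the \emph{same} coefficient $a_m$, after cyclically permuting $(i_m,j_m,k_m)$. Tensoring then gives $\sigma_J\sigma_K=(\prod_m a_m)\sigma_I$ and $\sigma_K\sigma_I=(\prod_m a_m)\sigma_J$. Since the positions with $a_m=\pm i$ do not change, $s(J,K)=s(K,I)=s(I,J)$ is odd, so these pairs anticommute as well; the paper leaves this last step implicit.

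You argue globally instead, using only that $\sigma_I$ and $\sigma_J$ are anticommuting involutions with $\sigma_I\sigma_J=\epsilon i\,\sigma_K$. This avoids checking cyclic one-qubit identities case by case. It also makes the anticommutation of $(J,K)$ and $(K,I)$ explicit, and shows transparently why one sign $\epsilon_{IJ}$ serves all three relations. The paper's route is a direct tensor-factor verification that gives the equality of the counts $s(\cdot,\cdot)$ immediately. Both are complete.
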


    \noindent
    {\bf Proof}:
    First, it is straightforward to check that $ \sigma_{0} \sigma_{j} = \sigma_{j} \sigma_{0} = \sigma_{j}$ and
    \begin{equation} \label{eqn:ProdP}
     \sigma_j^2 = \sigma_0, \hspace{20pt}
        \sigma_{\tau(1)} \sigma_{\tau(2)} = - \sigma_{\tau(2)} \sigma_{\tau(1)} = i \sigma_{\tau(3)},
    \end{equation}
     where $j \in \{0, 1, 2, 3\}$ and $\tau$ is any cyclic permutation of $(1, 2, 3)$.
    So standard Pauli matrices either commute or anti-commute.
    Consequently, we have
    \begin{equation} \label{eqn:sigma-IJ}
    \sigma_I \sigma_J = (\sigma_{i_1} \sigma_{j_1}) \otimes \cdots \otimes (\sigma_{i_N} \sigma_{j_N})
        = (-1)^{s(I, J)} \sigma_J \sigma_I
    \end{equation}
    for any $I=(i_1, \ldots, i_N), J=(j_1, \ldots, j_N) \in \mathcal{I}_N$.
    This proves equation \eqref{eqn:bracketGP}.

    By equation \eqref{eqn:ProdP},  for each $1 \leq m \leq N$, there exists a unique
    $k_m \in \{0, 1, 2, 3\}$ and $a_m \in \{1, i,  -i\}$ such that
    \begin{equation} \label{eqn:ijk-m}
        \sigma_{i_m} \sigma_{j_m} = a_m \,  \sigma_{k_m}.
    \end{equation}
    In fact, $a_m = \pm i$ if and only if $[\sigma_{i_m}, \sigma_{j_m}] \neq 0$, which
    occurs only when $i_m$, $j_m$ and $0$ are mutually distinct. In this case, we also have $k_m \neq 0$.
    The number of $m$ such that $a_m = \pm i$ is precisely equal to $s(I, J)$.
    It is also straightforward to check that
    equation \eqref{eqn:ijk-m} still holds for all cyclic permutations of $(i_m, j_m, k_m)$.
    Let $K=(k_1, \ldots, k_N)$. Then we have
    \begin{equation} \label{eqn:IJK}
     \sigma_I \sigma_J = \, \left( \prod_{m=1}^N a_m \right) \, \sigma_K,
    \end{equation}
    and this equation also holds for all cyclic permutations of $(I, J, K)$.

    If $[\sigma_I,\sigma_J] \neq 0$, then by equation \eqref{eqn:bracketGP}, $s(I, J)$ must be odd and
    $K \in \mathcal{I}_N^0$. In this case,
    $\prod_{m=1}^N a_m = \epsilon_{IJ} \, i $ for some $\epsilon_{IJ} \in \{1, -1 \}$. Hence we have
    \begin{equation} \label{eqn:IJKodd}
     \sigma_I \sigma_J = - \sigma_J \sigma_I = \, \epsilon_{IJ} \, i \, \sigma_K,
    \end{equation}
    which also holds for all cyclic permutations of $(I, J, K)$ and
    $\epsilon_{IJ}=\epsilon_{JK}=\epsilon_{KI}$.
    This equation implies equation \eqref{eqn:bracketGPnz}. The lemma is thus proved.
    $\Box$

    \begin{remark} \label{rem:IJK}
    Equation \eqref{eqn:sigma-IJ} is just \cite[Equation (2.5)]{auzzi2021geometry}. It follows from equations
    \eqref{eqn:bracketGPnz} and \eqref{eqn:IJK} that both  $[\sigma_I, \sigma_J]$ and $\sigma_I \sigma_J$  are always proportional to some
    generalized Pauli matrices (see also \cite[Page 4]{auzzi2021geometry}).
    Equation \eqref{eqn:bracketGPnz} also implies that
    \begin{equation} \label{eqn:IIJ=4J}
    [\sigma_I, [\sigma_I, \sigma_J]] = 4 \sigma_J
    \end{equation}
    if $[\sigma_I, \sigma_J] \neq 0$ (see also \cite[arXiv version, Section 1.5]{Brown2023}).
    \end{remark}

    If $I=(i_1, \ldots, i_N) \in \mathcal{I}_N$ has exactly $k$ non-zero components,
    then  we define
    \begin{equation} \label{eqn:locality}
            w(c \sigma_I) := k
    \end{equation}
    for any $0 \neq c \in \mathbb{C}$.
    The integer $w(c \sigma_I)$ is called the {\it weight} (or {\it locality}) of $c \sigma_I$.
    If $w(c \sigma_I)=k$, then $c \sigma_I$ acts nontrivially on exactly
    $k$ components in the space $(\mathbb{C}^2)^{\otimes N}$.

    A basis for the Lie algebra $\mathfrak{su}(2^N)$ is given by
    \begin{equation}
    \{\tilde{\sigma}_I := -i\sigma_I \mid I \in \mathcal{I}_N^0\}.
    \end{equation}
    In quantum computation, unitary operators of the form $\exp(t \tilde{\sigma}_I)$ with $w(\sigma_I) \in \{1, 2\}$ and
    $0 \leq t \leq 1$ are typically easier to implement. These operators are considered as {\it elementary gates} in quantum computation. The {\it circuit complexity} of an operator $U \in {\rm SU}(2^N)$ is defined to be the minimum number of elementary
    gates whose product is equal to $U$ (cf. \cite{N2006}). The circuit complexity is usually hard to compute. In \cite{N2006}, Nielsen proposed a geometric approach to approximate
    the circuit complexity of $U$ by the distance between $U$ and the identity element $e \in {\rm SU}(2^N)$ after choosing an appropriate right invariant Riemannian metric $g$ on ${\rm SU}(2^N)$.
    The distance between $U$ and $e$ is then called the {\it geometric quantum complexity} of $U$.
    The Riemannian metric $g$ is usually chosen to be of the following form:

    \begin{definition}[Penalty metrics] \label{def:Penalty}
    Given ${\bf q}=(q_1, \dots, q_N)$ with all $q_i > 0$. Let $g_{\bf q}$ be the right invariant Riemannian metric on
    $\mathrm{SU}(2^N)$ whose restriction to $T_e {\rm SU}(2^N) \cong \mathfrak{su}(2^N)$ is given by
    \[
    g_{\bf q}(\tilde{\sigma}_I, \tilde\sigma_J)  =
                 q_{w(\tilde\sigma_I)} \delta_{IJ}
    \]
    for all $I, J \in \mathcal{I}_N^0$, where  $\delta_{IJ} = 1$ if $I=J$ and $\delta_{IJ} = 0$ if $I \neq J$.
    The metric $g_{\bf q}$ is called the \textit{penalty metric} with penalty factors $q_1, \cdots, q_N$.
    \end{definition}


     For convenience, we also define $q_I := q_k$ if $w(\sigma_I) = k$ for any $I \in \mathcal{I}_N^0$.
     Then we have

    \begin{lemma} \label{lem:AdInv}
    For any $I, J, K \in \mathcal{I}_N^0$, either
    $g_{\bf q}([\tilde{\sigma}_I, \tilde\sigma_J], \tilde{\sigma}_K)
        = g_{\bf q}(\tilde\sigma_J,  [\tilde{\sigma}_I, \tilde{\sigma}_K]) = 0$, or
     \begin{equation} \label{eqn:adInv}
      g_{\bf q}([\tilde{\sigma}_I, \tilde\sigma_J], \tilde{\sigma}_K)
        + g_{\bf q}(\tilde\sigma_J,  [\tilde{\sigma}_I, \tilde{\sigma}_K]) = \pm 2  (q_K - q_J).
      \end{equation}
     In particular  $g_{\bf q}$ is bi-invariant  if and only if $q_1= \cdots = q_N$.
    \end{lemma}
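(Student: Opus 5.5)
We work directly with the structure constants from Lemma \ref{Paulimainthm}. Since $\tilde\sigma_I=-i\sigma_I$, we have
\[
[\tilde\sigma_I,\tilde\sigma_J]=-[\sigma_I,\sigma_J],
\]
which is either $0$ or $-2\epsilon_{IJ}\,i\sigma_K=2\epsilon_{IJ}\tilde\sigma_K$, where $K$ is the index determined by $I$ and $J$ in equation \eqref{eqn:bracketGPnz}. All three vectors $\tilde\sigma_I,\tilde\sigma_J,\tilde\sigma_K$ are mutually $g_{\bf q}$-orthogonal basis elements. Hence each term $g_{\bf q}([\tilde\sigma_I,\tilde\sigma_J],\tilde\sigma_{K'})$ is nonzero only when $\tilde\sigma_{K'}$ is exactly the basis vector into which the bracket lands.

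Fix $I,J,K\in\mathcal I_N^0$ and split into two cases. Suppose first that $[\sigma_I,\sigma_J]=0$. Then the first term vanishes. For the second term, if $[\sigma_I,\sigma_K]\ne0$, then Lemma \ref{Paulimainthm} applied to the pair $(I,K)$ sends $[\sigma_I,\sigma_K]$ to a multiple of some $\sigma_{J'}$. By the cyclic relations, $[\sigma_I,\sigma_{J'}]\neq 0$, so $J'\neq J$. Therefore the second term also vanishes. If $[\sigma_I,\sigma_K]=0$, the second term is $0$ as well. The symmetric argument shows that $[\sigma_I,\sigma_K]=0$ forces the first term to vanish. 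Thus the only case with a nonzero contribution is when both brackets are nonzero. By uniqueness of $K$ in \eqref{eqn:bracketGPnz}, each term is nonzero exactly when $(I,J,K)$ is a triple as in that lemma, and otherwise both terms vanish.

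In the remaining case $(I,J,K)$ is such a triple, so $[\tilde\sigma_I,\tilde\sigma_J]=2\epsilon\tilde\sigma_K$. Using the cyclic relation $[\sigma_K,\sigma_I]=2\epsilon\,i\sigma_J$, we get $[\tilde\sigma_I,\tilde\sigma_K]=-2\epsilon\tilde\sigma_J$. The sum then equals
\[
2\epsilon q_K-2\epsilon q_J=\pm2(q_K-q_J).
\]
The main point requiring care is this sign bookkeeping together with the uniqueness claim. The cyclic identities $\epsilon_{IJ}=\epsilon_{JK}=\epsilon_{KI}$ established in the proof of Lemma \ref{Paulimainthm} are precisely what resolve the sign.

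For the final claim, recall that right invariance together with $\mathrm{ad}$-invariance of $g|_e$ is equivalent to bi-invariance, since $\mathrm{SU}(2^N)$ is connected. If all $q_i$ are equal, then every expression above vanishes, so $g_{\bf q}|_e$ is $\mathrm{ad}$-invariant on a basis and hence bi-invariant. Conversely, suppose $q_a\ne q_b$ for some $a\neq b$. Choose $J$ of weight $a$ and $K$ of weight $b$ together with $I$ such that $(I,J,K)$ is a triple as above. For instance, when $a<b$, take $J=(1,\dots,1,0,\dots,0)$ with $a$ ones and $I=(3,0,\dots,0,1,\dots,1)$ with $b-a$ trailing ones. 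Then $K=(2,1,\dots,1)$ has weight $b$. When $a=1$, take $J=(1,0,\dots)$ instead; the construction still works. Since the anticommuting positions number exactly one, $s(I,J)=1$ is odd and the bracket is nonzero. The expression then equals $\pm2(q_b-q_a)\neq0$, so $g_{\bf q}$ is not bi-invariant.
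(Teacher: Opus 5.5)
Your proof is correct and follows the same route as the paper: the identity comes from the structure constants of Lemma~\ref{Paulimainthm}, with your case analysis spelling out what the paper calls ``follows directly.'' Non-bi-invariance is then witnessed by an explicit triple $(I,J,K)$ with $w(\tilde\sigma_J)\neq w(\tilde\sigma_K)$; the paper uses adjacent weights $t,t+1$ with $q_t\neq q_{t+1}$, whereas you connect an arbitrary pair $a<b$ with $q_a\neq q_b$, which is an inessential variation (placing the ones of $I$ in positions $a+1,\dots,b$ makes $K=(2,1,\dots,1,0,\dots,0)$ literally of weight $b$).
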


    \noindent
    {\bf Proof}: Equation \eqref{eqn:adInv} follows directly from equation \eqref{eqn:bracketGPnz}.
    The metric $g_{\bf q}$ is bi-invariant if and only if
    $g_{\bf q}([\tilde{\sigma}_I, \tilde\sigma_J], \tilde{\sigma}_K)
        + g_{\bf q}(\tilde\sigma_J,  [\tilde{\sigma}_I, \tilde{\sigma}_K]) = 0$
    for all $I, J, K \in \mathcal{I}_N^0$. Hence
     equation~\eqref{eqn:adInv} implies $g_{\bf q}$ is bi-invariant
     if $q_1= \cdots = q_N$.

    On the other hand, if penalty factors are not all equal, there must exist $1 \leq t \leq N-1$ such that
    $q_t \neq q_{t+1}$.
    Let $J=(j_1, \ldots, j_N)$ with $j_r= 1$ for $r \leq t$ and $j_r= 0$ for $t < r \leq N$,
    $I=(i_1, \ldots, i_N)$ with $i_t= 2$, $i_{t+1}=1$, and $i_r = 0$ for $r \notin \{t, t+1\}$,
     and $K=(k_1, \ldots, k_N)$ with $k_r= 1$ for $r < t$, $k_t= 3$, $k_{t+1}=1$, $k_r=0$ for $t+1 < r \leq N$.
    Then
    $[\tilde\sigma_I, \tilde\sigma_J] = -2 \tilde\sigma_K$
    and $[\tilde\sigma_I, \tilde\sigma_K] = 2 \tilde\sigma_J$.
    Since $w(\tilde\sigma_J)=t$ and $w(\tilde\sigma_K)=t+1$, we have
    \[ g_{\bf q}([\tilde{\sigma}_I, \tilde\sigma_J], \tilde{\sigma}_K)
        + g_{\bf q}(\tilde\sigma_J,  [\tilde{\sigma}_I, \tilde{\sigma}_K]) = 2 (q_{t} - q_{t+1}) \neq 0. \]
    Therefore $g_{\bf q}$ is not bi-invariant. The lemma is thus proved.
    $\Box$

    Typical examples of penalty metrics
    are the following:

    \begin{example}[Cliff metric] \label{ex:Cliff}
    Given $q > 1$. The penalty metric $g_{\bf q}$ with
    \[
    q_1=q_2=1, \,\,\, \text{and} \,\,\, q_k=q \,\,\, \text{for} \,\,\, \text{all} \,\,\, k \geq 3
    \]
    is called the \textit{cliff metric} with penalty factor $q$. This metric is also denoted by $g_q$.
    \end{example}

    \begin{example}[Binomial metric] \label{ex:Binomial}
    Given $\alpha > 0$. The penalty metric $g_{\bf q}$ with
    \[
       q_k= \left(\binom{N}{k} 3^k\right)^\alpha
    \]
    for all $k=1, \ldots , N$   is called the \textit{binomial metric}.
    \end{example}

    \begin{example}[Exponential metric] \label{ex:exponential}
    Given $x > 1$. The penalty metric $g_{\bf q}$ with
    \[
     q_k= x^{2k}
    \]
    for all $k=1, \ldots , N$ is called the \textit{exponential metric}.
    \end{example}

    The cliff metric $g_q$ was first introduced by Nielsen in \cite{N2006}.  It was proven in \cite{nielsen2006quantum} that the geometric quantum complexity with respect to this metric is polynomially equivalent to the approximate circuit complexity if $q$ is sufficiently large. The reason for choosing cliff metric is highly intuitive: when $q$ is sufficiently large (typically $q > 4^N$), it imposes a severe penalty on directions with weights $\geq  3$. This forces the tangent direction of the shortest path from $e$ to $U \in {\rm SU}(2^N)$ to be  close to directions with weights equal to $1$ and $2$. The term "cliff metric" originates precisely from this abrupt, cliff-like jump in the  assignment of penalty factors.

    However, the choice of the penalty metric in quantum geometric complexity is neither unique nor universally standardized. For example, in \cite{brown2022}, Brown investigated several penalty metrics with respect to which the geometric quantum  complexity is also polynomially equivalent to the approximate circuit complexity. Such metrics include the binomial metric and
    exponential metric.

    To study geometric quantum complexity, we need to compute the curvature tensor of penalty metrics $g$:
    \[
    R(X,Y,Z,W) := g(\nabla_X\nabla_YZ - \nabla_Y\nabla_XZ - \nabla_{[X,Y]}Z , W),
    \]
    where $X,Y,Z,W$ are smooth vector fields on $\mathrm{SU}(2^N)$ and $\nabla$ is the Levi-Civita connection of $g$.
    Milnor has computed the
    curvature tensor of left invariant metrics in \cite{M1976}. Using Milnor's result, Dowling and Nielsen  computed the curvature tensor of penalty metrics in \cite{DN2006} (see also \cite{auzzi2021geometry}).
    To state this result, we first introduce some notation.
    Note that whenever $[\tilde\sigma_I,\tilde\sigma_J] \neq 0$, it is a scalar multiple of some $\tilde\sigma_S$ (see Lemma \ref{Paulimainthm}). Define
    \begin{equation}
    q_{[I,J]} :=
    \begin{cases}
    q_S, & {\rm if} \,\,\, [\tilde\sigma_I,\tilde\sigma_J] \neq 0,\\
    1, & {\rm if} \,\,\, [\tilde\sigma_I,\tilde\sigma_J] = 0,
    \end{cases}
    \end{equation}
    and
    \begin{equation}
    c_{I,J}:= \frac{1}{2}\left(1 + \frac{q_J - q_I}{q_{[I,J]}}\right), \hspace{20pt}
    c_{[I,J],K} :=
    \begin{cases}
    c_{S,K},& {\rm if} \,\,\, [\tilde\sigma_I,\tilde\sigma_J] \neq 0, \\
    1, & {\rm if} \,\,\, [\tilde\sigma_I,\tilde\sigma_J] = 0.
    \end{cases}
    \end{equation}
    Then we have

    \begin{proposition}[{\cite[Appendix A]{DN2006}, \cite[Section II C]{auzzi2021geometry}}]
    Let $g = g_{\bf q}$ be the penalty metric on $\mathrm{SU}(2^N)$ with penalty factors ${\bf q}=(q_1, \cdots, q_N)$.
    For any $I \in \mathcal{I}_N^0$, let $X_I$ be the right invariant vector field on $\mathrm{SU}(2^N)$ satisfying $X_I \mid_e = \tilde\sigma_I$. Then the curvature tensor of $g$ is given by
    \begin{equation}\label{curvature_tensor_formula}
    \begin{aligned}
    R_{IJKL}&:= R(X_I,X_J,X_K,X_L)\\
    &= c_{I,K} \, c_{J,L} \, g([\tilde\sigma_I, \tilde\sigma_K],[\tilde\sigma_J, \tilde\sigma_L]) \\
    &\phantom{=} - c_{J,K} \, c_{I,L} \, g([\tilde\sigma_J, \tilde\sigma_K],[\tilde\sigma_I, \tilde\sigma_L])\\
    &\phantom{=}-c_{[I,J],K} \, g([[\tilde\sigma_I,\tilde\sigma_J],\tilde\sigma_K],\tilde\sigma_L).
    \end{aligned}
    \end{equation}
    \end{proposition}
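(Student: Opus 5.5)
The statement is the Dowling--Nielsen curvature formula \eqref{curvature_tensor_formula} for the penalty metric $g=g_{\bf q}$ in terms of the right invariant frame $\{X_I\}$. I would derive it from the Koszul formula for invariant frames, transported to the right invariant setting via Lemma \ref{left_right_relation}. The only difference from Milnor's left invariant computation is a sign: right invariant vector fields satisfy $[X_I,X_J]=-X_{[\tilde\sigma_I,\tilde\sigma_J]}$ when $X_I|_e=\tilde\sigma_I$. I would fix this sign convention once at the start and then work entirely at the Lie algebra level. Since all $g(X_I,X_J)$ are constant, the Koszul formula reduces to
\[
2g(\nabla_{X_I}X_J,X_K)=g([X_I,X_J],X_K)-g([X_J,X_K],X_I)+g([X_K,X_I],X_J).
\]

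\textbf{Step 1: the connection.} Because $g$ is diagonal in the Pauli basis, I can evaluate the right-hand side of the Koszul formula using Lemma \ref{Paulimainthm}. If $[\tilde\sigma_I,\tilde\sigma_J]=0$, all three brackets vanish unless $K$ is the Pauli index of the bracket, so $\nabla_{X_I}X_J=0$. If instead $[\tilde\sigma_I,\tilde\sigma_J]=\pm2\tilde\sigma_S$, only $K=S$ contributes, and by the cyclic relations \eqref{eqn:bracketGPnz} the three terms combine to $q_S+q_J-q_I$ times the common sign. This should give
\[
\nabla_{X_I}X_J=\mp c_{I,J}\,X_{[\tilde\sigma_I,\tilde\sigma_J]},\qquad c_{I,J}=\tfrac12\Bigl(1+\tfrac{q_J-q_I}{q_{[I,J]}}\Bigr),
\]
with the overall sign fixed by the right invariant convention. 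The case distinction in the definition of $q_{[I,J]}$ exists only to make this formula valid uniformly, including the commuting case.

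\textbf{Step 2: the curvature.} Substituting into $R(X,Y,Z,W)$, I would treat the three terms separately. For $g(\nabla_{X_I}\nabla_{X_J}X_K,X_L)$, metric compatibility together with constancy of $g(\nabla_{X_J}X_K,X_L)$ along the flow gives $-g(\nabla_{X_J}X_K,\nabla_{X_I}X_L)$. This equals $-c_{J,K}c_{I,L}\,g([\tilde\sigma_J,\tilde\sigma_K],[\tilde\sigma_I,\tilde\sigma_L])$, since the two sign conventions cancel in a product of two brackets. The antisymmetrized partner produces the term $c_{I,K}c_{J,L}\,g([\tilde\sigma_I,\tilde\sigma_K],[\tilde\sigma_J,\tilde\sigma_L])$. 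For the third term, $-g(\nabla_{[X_I,X_J]}X_K,X_L)$, I would use that $[X_I,X_J]$ is a multiple of $X_S$, where $S$ is the index of the bracket. Applying Step 1 with $S$ in place of $I$ produces $c_{S,K}$ times $g([[\tilde\sigma_I,\tilde\sigma_J],\tilde\sigma_K],\tilde\sigma_L)$, which is exactly the definition of $c_{[I,J],K}$. When the bracket vanishes the term is zero anyway, so the convention $c_{[I,J],K}=1$ is harmless.

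\textbf{Main obstacle.} The hard part is the sign bookkeeping. The Lie bracket of right invariant fields is the negative of the matrix commutator, and the curvature convention $\nabla_X\nabla_Y-\nabla_Y\nabla_X-\nabla_{[X,Y]}$ must be checked to yield exactly $+,-,-$ in \eqref{curvature_tensor_formula}. In the quadratic terms the signs cancel in pairs. In the last term the single sign flip from $[X_I,X_J]=-X_{[\tilde\sigma_I,\tilde\sigma_J]}$ is compensated by the sign flip in $\nabla_{X_S}X_K$. I would verify this cancellation explicitly, and as a sanity check confirm that the bi-invariant case $q_1=\cdots=q_N$ (so every $c=\tfrac12$) reduces to the standard formula $R=-\tfrac14 g([X,Y],[Z,W])$, up to the paper's sign convention.
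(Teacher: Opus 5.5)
Your derivation is correct. The paper gives no proof of its own; it cites Dowling--Nielsen and Auzzi et al., who derive the formula from Milnor's formulas for invariant metrics (the Koszul formula in an invariant orthonormal-type frame, then substitution into $R$). Your direct Koszul computation in the right invariant Pauli frame is essentially that argument made self-contained.

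Carrying the signs through resolves the $\mp$ in your Step 1 to a minus. When $[\tilde\sigma_I,\tilde\sigma_J]=2\epsilon\tilde\sigma_S$, one gets $2g(\nabla_{X_I}X_J,X_S)=-2\epsilon(q_S+q_J-q_I)$, hence $\nabla_{X_I}X_J=-c_{I,J}X_{[\tilde\sigma_I,\tilde\sigma_J]}$. With this, the three terms of $R_{IJKL}$ come out with exactly the signs $+,-,-$ of \eqref{curvature_tensor_formula}, as you anticipated.

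One small repair in the commuting case: your phrase ``unless $K$ is the Pauli index of the bracket'' does not make sense when the bracket is zero. What you need is that for the $K$ with $\sigma_K\propto\sigma_I\sigma_J$, the other two brackets also vanish. This holds because $[\sigma_J,\sigma_J\sigma_I]=\sigma_J[\sigma_J,\sigma_I]=0$, and similarly for $[\sigma_K,\sigma_I]$.
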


\section{The identity component of the isometry group}

    In this section, we will compute the identity component of the isometry group of a penalty metric $g_{\bf q}$ on
    $\mathrm{SU}(2^N)$ given in Definition \ref{def:Penalty}.

    We first introduce some notation.
    For any Lie group $G$, we will denote the adjoint action of $G$ on itself by $\mathcal{A}{\rm d}$ and the adjoint action
    of $G$ on its Lie algebra by ${\rm Ad}$. For any $x \in G$, $\mathcal{A}{\rm d}_x$ is the inner automorphism on $G$ defined by $\mathcal{A}{\rm d}_x (y) := x y x^{-1}$ for all $y \in G$.
    Let ${\rm Inn}(G) := \{ \mathcal{A}{\rm d}_x \mid x \in G\}$.
    If $G$ is a subgroup of $\mathrm{SU}(2^N)$, we define
    \begin{equation} \label{eqn:RAd}
      \overline{\mathcal{A}{\rm d}} (G) :=
            \{\mathcal{A}{\rm d}_x  \in {\rm Inn}(\mathrm{SU}(2^N)) \mid x \in G  \}.
    \end{equation}
    Here we use $\overline{\mathcal{A}{\rm d}} (G)$ instead of $\mathcal{A}{\rm d}(G)$
    to indicate that  elements of this group are adjoint actions on the ambient group $\mathrm{SU}(2^N)$, not
    adjoint actions on the subgroup $G$.
     For any positive integer $k \leq N$, define
     \begin{equation} \label{eqn:Wk}
     W_k := {\rm Span}_{\mathbb{R}} \{ \tilde{\sigma}_I \mid I \in \mathcal{I}_N^0, \,\,\, w(\tilde{\sigma}_I)=k\}
        \subset \mathfrak{su}(2^N),
     \end{equation}
     where $w(\tilde{\sigma}_I)$ is the weight of $\tilde{\sigma}_I$. We also define
     \[
    \mathrm{SU}(2)^{\otimes N}
        := \{U_1\otimes \dots \otimes U_N \mid U_i \in \mathrm{SU}(2), 1 \leq i \leq N\} \subset \mathrm{SU}(2^N).
    \]

    \begin{lemma} \label{lem:LieAlgSU2N}
     The Lie algebra of $\mathrm{SU}(2)^{\otimes N}$ is $W_1$.
    \end{lemma}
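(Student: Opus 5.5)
We need to show that the Lie algebra of $\mathrm{SU}(2)^{\otimes N}$, viewed as a Lie subgroup of $\mathrm{SU}(2^N)$, is exactly $W_1$. The plan is to realize $\mathrm{SU}(2)^{\otimes N}$ as the image of the Lie group homomorphism
\[
\Phi: \mathrm{SU}(2)^N \to \mathrm{SU}(2^N), \qquad (U_1, \ldots, U_N) \mapsto U_1 \otimes \cdots \otimes U_N,
\]
and to compute the image of its differential at the identity. The map $\Phi$ is a homomorphism because $(U_1\otimes\cdots\otimes U_N)(V_1\otimes\cdots\otimes V_N) = (U_1V_1)\otimes\cdots\otimes(U_NV_N)$. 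It is smooth because its matrix entries are polynomials in the entries of the $U_i$. Its image lies in $\mathrm{SU}(2^N)$, since a tensor product of unitaries is unitary and $\det(U_1\otimes\cdots\otimes U_N)=\prod_i \det(U_i)^{2^{N-1}}=1$.

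Because $\mathrm{SU}(2)^N$ is compact, the image $\mathrm{SU}(2)^{\otimes N}$ is a compact, hence closed, subgroup of $\mathrm{SU}(2^N)$. Its Lie algebra is $\Phi_{*e}(\mathfrak{su}(2)^N)$, since the image of a Lie group homomorphism has Lie algebra equal to the image of the differential.

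Next we compute $\Phi_{*e}$. Differentiating $t\mapsto \Phi(\sigma_0,\ldots,\exp(tX_k),\ldots,\sigma_0)$ at $t=0$, where $\exp(tX_k)$ sits in the $k$-th slot and $X_k\in\mathfrak{su}(2)$, gives $\sigma_0\otimes\cdots\otimes X_k\otimes\cdots\otimes\sigma_0$. By linearity,
\[
\Phi_{*e}(X_1,\ldots,X_N)=\sum_{k=1}^N \sigma_0\otimes\cdots\otimes X_k\otimes\cdots\otimes\sigma_0 .
\]
Taking $X_k=-i\sigma_j$ for $j\in\{1,2,3\}$ gives exactly $\tilde\sigma_I$, where $I$ has $j$ in position $k$ and $0$ elsewhere. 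These elements, over all $k$ and $j$, are precisely the basis vectors of $W_1$. Hence $\Phi_{*e}(\mathfrak{su}(2)^N)=W_1$.

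Finally, one should check that this agrees with the Lie algebra of $\mathrm{SU}(2)^{\otimes N}$ as a closed subgroup, namely $\{X : \exp(tX)\in \mathrm{SU}(2)^{\otimes N} \ \forall t\}$. The inclusion $W_1\subseteq$ this set is direct. The summands above commute, since they act on different tensor factors, so
\[
\exp\Bigl(t\sum_k \sigma_0\otimes\cdots\otimes X_k\otimes\cdots\otimes\sigma_0\Bigr)=\exp(tX_1)\otimes\cdots\otimes\exp(tX_N)\in \mathrm{SU}(2)^{\otimes N}.
\]
The reverse inclusion is the only slightly delicate point. I would settle it by the standard fact that for a homomorphism of Lie groups $\Phi$ with closed image, $\mathrm{Lie}(\Phi(H))=\Phi_{*}(\mathrm{Lie}(H))$. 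This holds because $\Phi$ factors through the injective immersion $H/\ker\Phi\to \Phi(H)$, which is an embedding onto a closed subgroup. Alternatively, one can argue by dimension: $\ker\Phi$ is finite, consisting of tuples $(\pm\sigma_0,\ldots)$ with product of signs $1$, so $\dim \mathrm{SU}(2)^{\otimes N}=3N=\dim W_1$.
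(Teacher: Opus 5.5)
Your proof is correct and follows essentially the same route as the paper. The paper computes the tangent vector of a curve $U_1(t)\otimes\cdots\otimes U_N(t)$ by the product rule, which is your computation of the differential at the identity, and it gets $W_1$ inside the Lie algebra from $\exp(tX)=\exp(tv_1)\otimes\cdots\otimes\exp(tv_N)$ for commuting summands. Your extra justification of the inclusion of the Lie algebra into $W_1$, via the image of the differential or the dimension count $3N$ from the finite kernel, makes explicit a step the paper leaves implicit: that every curve in $\mathrm{SU}(2)^{\otimes N}$ through the identity can be written in product form.
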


    \noindent
    {\bf Proof}:
    Let $H = \mathrm{SU}(2)^{\otimes N}$.
    Take any arbitrary smooth curve
    $\gamma(t) = U_{1}(t) \otimes \cdots \otimes U_N(t)$ in $H$ with $U_j(t) \in \mathrm{SU}(2)$ and $U_j(0) = \sigma_0$ for all $1 \leq j \leq N$. By the product rule for derivatives of tensors, we have
    \[
    \frac{\mathrm{d}}{\mathrm{d}t}\bigg|_{t = 0}\gamma(t)
        = \sum_{j=1}^N \sigma_0 \otimes \cdots \otimes \sigma_0 \otimes U_j'(0) \otimes \sigma_0 \otimes \cdots\otimes \sigma_0 \in W_1.
    \]
    Conversely, for any  $X \in W_1$, we can write
    $X = \sum_{j=1}^N X_j$, where $X_j=\sigma_0 \otimes \cdots \otimes v_j \otimes \cdots \otimes \sigma_0 $ with $v_j \in \mathfrak{su}(2)$. Since $X_1, \ldots , X_N$ commute with each other, we have
    \[
    \exp(tX) = \prod_{j=1}^N \exp(t X_j) = \exp(t v_1) \otimes \cdots \otimes \exp(t v_N) \in H.
     \]
    This completes the proof of the lemma.
    $\Box$

    \begin{remark}
    Although the definition of $\mathrm{SU}(2)^{\otimes N}$ is quite natural from the forms of its elements, the appearance of the tensor notation here might cause some confusions. In fact,
    let $\mathrm{SU}(2)^{N}$ be the product of $N$ copies of $\mathrm{SU}(2)$. Then
    Lemma~\ref{lem:LieAlgSU2N} implies that $\mathrm{SU}(2)^{\otimes N}$ and $\mathrm{SU}(2)^{N}$
    have isomorphic Lie algebras. We will see in the proof of Lemma \ref{lem:SU2SO3N} that
    $\mathrm{SU}(2)^{\otimes N}$ is isomorphic to a quotient of $\mathrm{SU}(2)^{N}$ by a finite group.
    In particular, dimension of $\mathrm{SU}(2)^{\otimes N}$ is $3N$, not $3^N$.
    \end{remark}

    The main result of this section is the following: 
    \begin{theorem}\label{isotropycomponent}
    Suppose $g = g_{\bf q}$ is a penalty metric on $\mathrm{SU}(2^N)$ satisfying the condition
    \begin{equation}\label{cond:V1_invariant}
    h_{*e}(W_1) = W_1 \quad {\rm for \,\,\, all \,\,\,} h \in \mathrm{Aut}(\mathrm{SU}(2^N), g).
    \end{equation}
    Then the identity component of the isometry group is
    \begin{equation}
    \mathrm{Isom}(\mathrm{SU}(2^N), g)_0
    = R(\mathrm{SU}(2^N)) \rtimes
                    \overline{\mathcal{A}{\rm d}}(\mathrm{SU}(2)^{\otimes N}),
    \end{equation}
    where $R(\mathrm{SU}(2^N))$ is the group of all right translations on $\mathrm{SU}(2^N)$.
    In particular, the identity component of the isotropy subgroup at $e$ is given by
    \begin{equation} \label{eqn:Isom-e-0}
    \mathrm{Isom}_e(\mathrm{SU}(2^N), g)_0 = \overline{\mathcal{A}{\rm d}}(\mathrm{SU}(2)^{\otimes N}).
    \end{equation}
    \end{theorem}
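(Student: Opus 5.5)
By Corollary \ref{cor:simple-component}, applied to the compact connected simple group $\mathrm{SU}(2^N)$, we have $\mathrm{Isom}(\mathrm{SU}(2^N),g)_0 = R(\mathrm{SU}(2^N)) \rtimes \mathrm{Isom}_e(\mathrm{SU}(2^N),g)_0$ and $\mathrm{Isom}_e(\mathrm{SU}(2^N),g)_0 = \mathrm{Aut}(\mathrm{SU}(2^N),g)_0$. So both statements reduce to proving equation \eqref{eqn:Isom-e-0}, namely
\[
\mathrm{Aut}(\mathrm{SU}(2^N),g)_0 = \overline{\mathcal{A}{\rm d}}(\mathrm{SU}(2)^{\otimes N}).
\]
By \eqref{eqn:AutSU(n)}, the identity component of $\mathrm{Aut}(\mathrm{SU}(2^N))$ is $\mathrm{Inn}(\mathrm{SU}(2^N))$. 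Hence $\mathrm{Aut}(\mathrm{SU}(2^N),g)_0$ is a connected subgroup of inner automorphisms $\mathcal{A}{\rm d}_x$. By Lemma \ref{automorphism_isometry}, these are exactly the inner automorphisms for which ${\rm Ad}_x$ is a $g_e$-isometry of $\mathfrak{su}(2^N)$. The plan is to prove the two inclusions, comparing Lie algebras for the harder one.

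For the inclusion $\supseteq$, take $x = U_1\otimes\cdots\otimes U_N$. Then ${\rm Ad}_x(\sigma_{i_1}\otimes\cdots\otimes\sigma_{i_N}) = (U_1\sigma_{i_1}U_1^{-1})\otimes\cdots\otimes(U_N\sigma_{i_N}U_N^{-1})$. Each factor with $i_k=0$ stays $\sigma_0$. Each factor with $i_k\neq 0$ is rotated by an $\mathrm{SO}(3)$ matrix within ${\rm span}\{\sigma_1,\sigma_2,\sigma_3\}$. Consequently ${\rm Ad}_x$ preserves every $W_k$. Its restriction to $W_k$ is a tensor product of orthogonal matrices with respect to the basis $\{\tilde\sigma_I\}$, so it is orthogonal for the Frobenius-type inner product in which the $\tilde\sigma_I$ are orthonormal. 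Since $g$ is a constant multiple $q_k$ of that inner product on each $W_k$, and the $W_k$ are $g$-orthogonal, ${\rm Ad}_x$ is a $g$-isometry. Because $\mathrm{SU}(2)^{\otimes N}$ is connected, so is $\overline{\mathcal{A}{\rm d}}(\mathrm{SU}(2)^{\otimes N})$, and it therefore lies in the identity component.

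For the inclusion $\subseteq$, I compare Lie algebras. The Lie algebra of $\mathrm{Aut}(\mathrm{SU}(2^N),g)_0$ is identified with $\{X\in\mathfrak{su}(2^N) \mid {\rm ad}_X \text{ is } g\text{-skew}\}$. Condition \eqref{cond:V1_invariant} gives ${\rm Ad}_{\exp tX}(W_1)=W_1$ for all $t$, so ${\rm ad}_X(W_1)\subseteq W_1$. The main step is to show that this forces $X\in W_1$. Write $X=\sum_I x_I\tilde\sigma_I$. For a component $I$ with $w(I)\ge 2$, choose a weight-one $\tilde\sigma_J$, supported at a position $k$ where $i_k\neq 0$, that anticommutes there. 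Then $[\tilde\sigma_I,\tilde\sigma_J]$ is proportional to some $\tilde\sigma_K$ with $w(K)=w(I)\geq 2$. Different $I$ produce different $K$ for a fixed $J$, since $K$ is determined by $I$ and $J$ via the multiplication rule in Lemma \ref{Paulimainthm}, which is invertible. So the component of ${\rm ad}_X(\tilde\sigma_J)$ along $\tilde\sigma_K$ is $\pm 2x_I$. Since ${\rm ad}_X(\tilde\sigma_J)$ must lie in $W_1$, this forces $x_I=0$. Hence $X\in W_1$, which by Lemma \ref{lem:LieAlgSU2N} is the Lie algebra of $\mathrm{SU}(2)^{\otimes N}$. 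Note that the skew-symmetry is not even needed for this step; the $W_1$-invariance alone suffices. The two connected groups therefore have Lie algebras with $\mathfrak{aut}\subseteq W_1$, the reverse inclusion follows from the first part, and equality of the identity components follows.

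The delicate points are the following. First, the identification of the Lie algebra of $\mathrm{Aut}(\mathrm{SU}(2^N),g)_0\subseteq \mathrm{Inn}\cong\mathrm{PSU}(2^N)$ with a subalgebra of $\mathfrak{su}(2^N)$ via ${\rm ad}$ is legitimate because the center is discrete. Second, the injectivity of $I\mapsto K$ for fixed $J$ must be checked. Third, for $N\ge 2$ one must confirm that every weight-$\ge 2$ index admits an anticommuting weight-one partner; this is immediate, since one can pick any $j\neq 0,i_k$ at a position $k$ with $i_k\neq 0$.
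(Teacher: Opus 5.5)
Your proposal is correct and follows essentially the same route as the paper: reduce via Corollary~\ref{cor:simple-component}, obtain $\supseteq$ from the isometry property of $\mathrm{Ad}_x$ for $x\in\mathrm{SU}(2)^{\otimes N}$ (Proposition~\ref{prop:local_unitary_isometry}), and obtain $\subseteq$ by showing that $\mathrm{ad}_X(W_1)\subseteq W_1$ forces $X\in W_1$ (Lemma~\ref{preserve1local}) and then applying Lemma~\ref{lem:LieAlgSU2N}. The differences are cosmetic: you prove the key step by reading off the $\tilde\sigma_K$-coefficient, using injectivity of $I\mapsto K$ for fixed weight-one $J$, whereas the paper combines Lemma~\ref{lem:ad-non-0} with the weight-preservation of $\mathrm{ad}_{W_1}$ from Lemma~\ref{1localproperty}; and you land in $\mathrm{Inn}(\mathrm{SU}(2^N))$ via $\mathrm{Aut}(\mathrm{SU}(2^N))_0=\mathrm{Inn}(\mathrm{SU}(2^N))$, whereas the paper uses Proposition~\ref{simpleLiegp}.
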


    Furthermore, we will show that Condition \eqref{cond:V1_invariant} is satisfied by the cliff, binomial, and exponential metrics, which are the most commonly used penalty metrics in geometric quantum  complexity.

    We first make some necessary preparations for the proof of Theorem \ref{isotropycomponent}.

    \begin{lemma}\label{1localproperty}
     For any $U \in \mathrm{SU}(2)^{\otimes N}$, $X \in W_1$, and $1 \leq k \leq N$, we have
     \[ \mathrm{Ad}_U (W_k) = W_k,  \hspace{20pt}  \mathrm{ad}_{X} (W_k) \subset W_k, \]
      where $\mathrm{ad}_X : \mathfrak{su}(2^N) \to \mathfrak{su}(2^N)$ is the adjoint action
      of the Lie algebra $\mathfrak{su}(2^N)$ on itself.
      In particular, $W_1$ is a Lie subalgebra of $\mathfrak{su}(2^N)$.
    \end{lemma}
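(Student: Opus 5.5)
We will prove the $\mathrm{Ad}$-statement factor by factor, then obtain the $\mathrm{ad}$-statement by differentiating. Write $U = U_1 \otimes \cdots \otimes U_N$ with $U_j \in \mathrm{SU}(2)$. For a generalized Pauli matrix we have
\[
\mathrm{Ad}_U(\tilde\sigma_I) = -i\,(U_1\sigma_{i_1}U_1^{-1}) \otimes \cdots \otimes (U_N \sigma_{i_N} U_N^{-1}).
\]
In each tensor factor, $U_j \sigma_0 U_j^{-1} = \sigma_0$. When $i_j \neq 0$, the matrix $U_j\sigma_{i_j}U_j^{-1}$ is traceless and Hermitian. It is therefore a real linear combination of $\sigma_1, \sigma_2, \sigma_3$, because conjugation by $\mathrm{SU}(2)$ preserves $\mathfrak{su}(2) = \mathrm{span}_{\mathbb{R}}\{-i\sigma_1,-i\sigma_2,-i\sigma_3\}$.

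We then expand the tensor product multilinearly. This writes $\mathrm{Ad}_U(\tilde\sigma_I)$ as a real combination of $\tilde\sigma_J$ with $j_m = 0$ exactly when $i_m = 0$, so every such $J$ has the same weight as $I$. Hence $\mathrm{Ad}_U(W_k) \subseteq W_k$. Since $\mathrm{Ad}_U$ is invertible with inverse $\mathrm{Ad}_{U^{-1}}$, and $U^{-1} \in \mathrm{SU}(2)^{\otimes N}$, we get equality $\mathrm{Ad}_U(W_k) = W_k$. A dimension argument would give equality as well.

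For the Lie algebra statement, take $X \in W_1$. By Lemma \ref{lem:LieAlgSU2N}, $\exp(tX) \in \mathrm{SU}(2)^{\otimes N}$ for all $t$. So $\mathrm{Ad}_{\exp(tX)}(Y) \in W_k$ for every $Y \in W_k$. Differentiating at $t=0$ gives $\mathrm{ad}_X(Y) = \frac{d}{dt}\big|_{t=0}\mathrm{Ad}_{\exp(tX)}Y \in W_k$, since $W_k$ is a closed linear subspace.

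Alternatively, one can argue directly with Lemma \ref{Paulimainthm}. For $X = \tilde\sigma_J$ of weight one with nonzero entry in slot $m$, the bracket $[\sigma_J,\sigma_I]$ is nonzero only if $i_m \notin \{0, j_m\}$. In that case the resulting $K$ agrees with $I$ outside slot $m$, and $k_m \neq 0$, so $K$ has the same weight as $I$. Taking $k=1$ then shows that $W_1$ is closed under brackets, i.e. it is a Lie subalgebra.

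No step is a real obstacle. The only point needing care is the bookkeeping that conjugation in a factor with $i_m \neq 0$ never produces a $\sigma_0$ component, and this holds because $\sigma_0$ is orthogonal to $\sigma_1,\sigma_2,\sigma_3$ with respect to the trace form.
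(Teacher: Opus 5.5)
Your proof is correct and takes the same approach as the paper. You conjugate factor by factor to get $\mathrm{Ad}_U(W_k)\subseteq W_k$, use invertibility for equality, and differentiate $\mathrm{Ad}_{\exp(tX)}$ via Lemma \ref{lem:LieAlgSU2N} for the $\mathrm{ad}$-statement; your extra direct bracket check via Lemma \ref{Paulimainthm} is also valid but not needed, since the subalgebra claim is just the case $k=1$.
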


    \noindent
    {\bf Proof}:
    Assume $U = U_1 \otimes \cdots \otimes U_N$ with $U_j \in \mathrm{SU}(2)$.
    For any $I = (i_1, \ldots, i_N) \in \mathcal{I}_N^0$,
    $\tilde\sigma_{I} = -i\sigma_{i_1} \otimes \cdots \otimes \sigma_{i_N}$ and
    \[
    \mathrm{Ad}_U(\tilde\sigma_{I}) = -i(U_1\sigma_{i_1}U_1^{-1}) \otimes \cdots \otimes (U_N \sigma_{i_N}U_N^{-1}).
    \]
    For any $1 \leq j \leq N$,
    $iU_j \sigma_{i_j}U_{j}^{-1} = {\rm Ad}_{U_j} (i \sigma_{i_j}) \in \mathfrak{su}(2)$ if $i_j \neq 0$, and $U_{j}\sigma_{i_j}U_j^{-1} = \sigma_0$ if $i_j = 0$. It follows that if $w(\tilde{\sigma}_I) = k$, then
    $\mathrm{Ad}_U(\tilde\sigma_I) \in W_k$.
    Therefore, $\mathrm{Ad}_U (W_k) \subset W_k$ for all $k$. Since $\mathrm{Ad}_U$ is invertible,
    we have $\mathrm{Ad}_U (W_k) = W_k$ for all $k$.

    For any $X \in W_1$, by Lemma \ref{lem:LieAlgSU2N}, $\exp(tX) \in \mathrm{SU}(2)^{\otimes N}$
    for all $t \in \mathbb{R}$.
    Since $\mathrm{ad}_X = \left. \frac{d}{dt} \right|_{t=0} \mathrm{Ad}_{\exp(tX)}$,
    we have $\mathrm{ad}_X (W_k) \subset W_k$ for all $k$.
    So the lemma is proved.
     $\Box$

    \begin{proposition}\label{prop:local_unitary_isometry}
    Let $g = g_{\bf q}$ be a penalty metric on $\mathrm{SU}(2^N)$. Then for any $U  \in \mathrm{SU}(2)^{\otimes N}$, its adjoint action $\mathrm{Ad}_U$ on $\mathfrak{su}(2^N)$ is a linear isometry with respect to $g$. Consequently, we have
    \begin{equation}
    \overline{\mathcal{A}{\rm d}}(\mathrm{SU}(2)^{\otimes N}) \subseteq \mathrm{Isom}_e(\mathrm{SU}(2^N), g).
    \end{equation}
    \end{proposition}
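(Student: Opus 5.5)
The plan is to show that $\mathrm{Ad}_U$ preserves each weight space $W_k$ and acts orthogonally on it with respect to $g$, where $g$ restricted to $W_k$ is just $q_k$ times a fixed bi-invariant inner product. The conclusion about $\overline{\mathcal{A}{\rm d}}$ then follows from Lemma \ref{automorphism_isometry}.

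\emph{Step 1: reduce to the spaces $W_k$.} Since $\mathfrak{su}(2^N)=\bigoplus_{k=1}^N W_k$, and the $W_k$ are mutually $g$-orthogonal (distinct $I$ give orthogonal $\tilde\sigma_I$), we can write
\[
g|_{W_k}=q_k\,\langle\cdot,\cdot\rangle|_{W_k},
\]
where $\langle X,Y\rangle := -2^{-N}\mathrm{tr}(XY)$. Here we use that $\mathrm{tr}(\sigma_I\sigma_J)=2^N\delta_{IJ}$, which follows from equation \eqref{eqn:IJK} because $\mathrm{tr}\,\sigma_K=0$ for $K\neq 0$. Hence
\[
g(X,Y)=\sum_k q_k\langle \pi_k X,\pi_k Y\rangle,
\]
where $\pi_k$ denotes the orthogonal projection onto $W_k$.

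\emph{Step 2: $\mathrm{Ad}_U$ commutes with the projections.} By Lemma \ref{1localproperty}, $\mathrm{Ad}_U(W_k)=W_k$ for every $k$, so $\pi_k\circ\mathrm{Ad}_U=\mathrm{Ad}_U\circ\pi_k$.

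\emph{Step 3: $\mathrm{Ad}_U$ is orthogonal for the trace form.} We have $\mathrm{tr}(UXU^{-1}UYU^{-1})=\mathrm{tr}(XY)$, so $\langle \mathrm{Ad}_UX,\mathrm{Ad}_UY\rangle=\langle X,Y\rangle$. Combining Steps 1--3,
\[
g(\mathrm{Ad}_UX,\mathrm{Ad}_UY)=\sum_k q_k\langle \mathrm{Ad}_U\pi_kX,\mathrm{Ad}_U\pi_kY\rangle=g(X,Y).
\]

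\emph{Step 4: conclude.} The differential of $\mathcal{A}{\rm d}_U$ at $e$ is $\mathrm{Ad}_U$, which is a linear isometry by Step 3, and $\mathcal{A}{\rm d}_U$ is an automorphism of $\mathrm{SU}(2^N)$. Lemma \ref{automorphism_isometry} therefore shows that $\mathcal{A}{\rm d}_U$ is an isometry. It fixes $e$, so it lies in $\mathrm{Isom}_e(\mathrm{SU}(2^N),g)$.

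There is no real obstacle here. The only point needing care is the orthogonality of the generalized Pauli basis under the trace form, which is what justifies writing $g$ as a weighted sum of copies of one $\mathrm{Ad}$-invariant form.
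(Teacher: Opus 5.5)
Your proof is correct and follows essentially the same route as the paper. You write $g$ as $\sum_k q_k$ times an $\mathrm{Ad}$-invariant form on the mutually orthogonal spaces $W_k$, use Lemma \ref{1localproperty} to see that $\mathrm{Ad}_U$ preserves each $W_k$, and conclude with Lemma \ref{automorphism_isometry}. The one difference is that the paper gets the $\mathrm{Ad}$-invariance of the unit-penalty metric $g_1$ from Lemma \ref{lem:AdInv}, whereas you identify $g_1$ directly with the normalized trace form $-2^{-N}\mathrm{tr}(XY)$; this is a valid and slightly more self-contained shortcut.
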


    \noindent
    {\bf Proof}:
    Let $g_1$ be the penalty metric on ${\rm SU}(2^N)$ with all penalty factors equal to 1.
    By Lemma~\ref{lem:AdInv}, $g_1$ is a bi-invariant metric. In particular,
    $\mathrm{Ad}_U^* g_1 = g_1$.

    For any penalty metric $g = g_{\bf q}$ with ${\bf q}=(q_1, \ldots, q_N)$,
     $g|_{W_k} = q_k \, g_1 |_{W_k}$ for all $k$. By Lemma \ref{1localproperty}, $\mathrm{Ad}_U (W_k) = W_k$.
    It follows that ${\rm Ad}_U |_{W_k}$ is a linear isometry with respect to $g$
    for all $k$. Since $W_k$ is orthogonal to $W_l$ with respect to $g$ if $k \neq l$,
    this implies that ${\rm Ad}_U$ is a linear isometry on $\mathfrak{su}(2^N)$ with respect to $g$.
    By Lemma \ref{automorphism_isometry}, $\mathcal{A}{\rm d}_U$ is an isometry on $\mathrm{SU}(2^N)$.
    The proposition is thus proved.
    $\Box$

    \begin{lemma} \label{lem:ad-non-0}
    For any $I \in \mathcal{I}_N^0$ and $X= \sum_{J \in \mathcal{I}_N^0} a_J \sigma_J$
    with $a_J \in \mathbb{C}$,
    if there exists $K \in \mathcal{I}_N^0$ such that $a_K [\sigma_I, \sigma_K] \neq 0$,
    then $[\sigma_I, X] \neq 0$.
    \end{lemma}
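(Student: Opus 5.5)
We want to show $[\sigma_I, X] = \sum_J a_J [\sigma_I,\sigma_J] \neq 0$. The plan is to use the fact, from Lemma \ref{Paulimainthm} and Remark \ref{rem:IJK}, that each nonzero bracket $[\sigma_I,\sigma_J]$ is a nonzero scalar multiple of a single generalized Pauli matrix. Since the generalized Pauli matrices $\{\sigma_S \mid S \in \mathcal{I}_N\}$ are linearly independent over $\mathbb{C}$ (they form a basis of all $2^N\times 2^N$ complex matrices, being orthogonal for the trace form $\mathrm{tr}(\sigma_S^*\sigma_T) = 2^N\delta_{ST}$), it suffices to check that distinct $J$ with $[\sigma_I,\sigma_J]\neq 0$ give distinct Pauli matrices $\sigma_S$. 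Then no cancellation can occur among the terms of the sum.

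\textbf{Injectivity.} By equation \eqref{eqn:bracketGP}, whenever $[\sigma_I,\sigma_J]\neq 0$ we have $[\sigma_I,\sigma_J] = 2\sigma_I\sigma_J$. By equation \eqref{eqn:IJK}, $\sigma_I\sigma_J = c_J\,\sigma_{S(J)}$ with $c_J \neq 0$ a scalar. The assignment $J\mapsto S(J)$ is injective. Indeed, using $\sigma_I^2 = \sigma_0^{\otimes N}$ we get $\sigma_J = \sigma_I(\sigma_I\sigma_J) = c_J\,\sigma_I\sigma_{S(J)}$, and $\sigma_I\sigma_{S(J)}$ is itself a scalar multiple of a uniquely determined Pauli matrix. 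Hence $S(J)=S(J')$ forces $\sigma_J$ and $\sigma_{J'}$ to be proportional, and linear independence then gives $J=J'$. Alternatively, one can argue componentwise: at each slot $m$ the index $k_m$ is determined by $i_m, j_m$, and conversely $j_m$ is determined by $i_m, k_m$.

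\textbf{Conclusion.} We obtain
\[ [\sigma_I, X] = \sum_{J:\,[\sigma_I,\sigma_J]\neq 0} 2\,a_J c_J\,\sigma_{S(J)}, \]
with the $\sigma_{S(J)}$ pairwise distinct and hence linearly independent. The coefficient of $\sigma_{S(K)}$ is $2a_Kc_K$, which is nonzero by hypothesis. Therefore $[\sigma_I,X]\neq 0$.

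The only point needing care is the injectivity of $J\mapsto S(J)$ together with the linear independence of the $\sigma_S$. Both are routine, so I expect no real obstacle.
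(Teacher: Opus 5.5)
Your proof is correct. The paper argues differently. It splits $X = X_1 + X_2$ according to whether $\sigma_J$ commutes with $\sigma_I$, and then applies $\mathrm{ad}_{\sigma_I}$ a second time. By Remark~\ref{rem:IJK}, $[\sigma_I,[\sigma_I,\sigma_J]] = 4\sigma_J$ for every non-commuting $J$, so $[\sigma_I,[\sigma_I,X_2]] = 4X_2 \neq 0$, hence $[\sigma_I,X] = [\sigma_I,X_2] \neq 0$.

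The underlying mechanism is the same as your injectivity step: both use $\sigma_I^2 = \mathrm{Id}$ to undo the bracket, and your identity $\sigma_J = c_J\,\sigma_I\sigma_{S(J)}$ is that double commutator in disguise. The paper's version avoids tracking the index map $J \mapsto S(J)$ and reading off coefficients. It needs linear independence of the Pauli matrices only to see that $X_2 \neq 0$.

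Your version yields slightly finer information. It shows that $\mathrm{ad}_{\sigma_I}$ sends the non-commuting basis vectors to pairwise distinct basis vectors, up to nonzero scalars. That is precisely the linear-independence fact the paper later invokes in the proof of Lemma~\ref{lem:W2W2-W3}.
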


    \noindent
    {\bf Proof}:
    Let $\mathcal{J}_1 := \{ J \in \mathcal{I}_N^0 \mid [\sigma_I, \sigma_J] =0 \}$
      and $\mathcal{J}_2 := \{ J \in \mathcal{I}_N^0 \mid [\sigma_I, \sigma_J]  \neq 0 \}$.
     Let $X_1 := \sum_{J \in \mathcal{J}_1} a_J \sigma_J$ and $X_2 := \sum_{J \in \mathcal{J}_2} a_J \sigma_J$.
     Since $K \in \mathcal{J}_2$, $X_2 \neq 0$.
     By equation \eqref{eqn:IIJ=4J}, $[\sigma_I, [\sigma_I, X_2]] = 4 X_2 \neq 0$.
     Hence $[\sigma_I, X] = [\sigma_I, X_2] \neq 0$.
     The lemma is thus proved.
     $\Box$

    \begin{lemma}\label{preserve1local}
    For $X \in \mathfrak{su}(2^N)$, if $\mathrm{ad}_X(W_1) \subseteq W_1$, then
    $X \in W_1$.
    \end{lemma}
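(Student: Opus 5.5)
Write $X = \sum_{J \in \mathcal{I}_N^0} a_J \tilde\sigma_J$ with real coefficients $a_J$, and split $X = X_1 + X'$ with $X_1 \in W_1$ and $X' \in \bigoplus_{k \geq 2} W_k$. By Lemma \ref{1localproperty}, $\mathrm{ad}_{X_1}(W_1) \subseteq W_1$. So the hypothesis gives $\mathrm{ad}_{X'}(W_1) \subseteq W_1$, and it suffices to show $X' = 0$. The plan is to argue by contradiction: assume some $a_K \neq 0$ with $w(\tilde\sigma_K) = k \geq 2$, and produce a weight-one $\tilde\sigma_I$ with $[\tilde\sigma_I, X']$ having a nonzero component outside $W_1$.

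Pick such a $K = (k_1, \ldots, k_N)$ and choose an index $m$ with $k_m \neq 0$. Let $I$ be the weight-one multi-index with $i_m$ a nonzero value different from $k_m$ and all other entries $0$. Then $[\sigma_{i_m}, \sigma_{k_m}] \neq 0$, so $s(I,K) = 1$, and Lemma \ref{Paulimainthm} gives $[\tilde\sigma_I, \tilde\sigma_K] = \pm 2\tilde\sigma_{K'}$. Here $K'$ agrees with $K$ except in position $m$, where the entry is the third nonzero index. In particular $w(\tilde\sigma_{K'}) = k \geq 2$.

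The key point is that, in the basis $\{\tilde\sigma_J\}$, $\mathrm{ad}_{\tilde\sigma_I}$ sends each basis element either to $0$ or to $\pm 2$ times another basis element of the \emph{same} weight. This holds because $I$ has weight one and only changes position $m$ among nonzero values. It is also injective on $\mathcal{J}_2 = \{J : [\sigma_I,\sigma_J]\neq 0\}$, since $[\sigma_I,[\sigma_I,\sigma_J]] = 4\sigma_J$ by equation \eqref{eqn:IIJ=4J}. Hence $[\tilde\sigma_I, X'] = \sum_{J \in \mathcal{J}_2} \pm 2 a_J \tilde\sigma_{J'}$ with the $J'$ distinct and all of weight $\geq 2$. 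The coefficient of $\tilde\sigma_{K'}$ is $\pm 2 a_K \neq 0$, so this bracket lies in $\bigoplus_{k\ge2}W_k$ and is nonzero. Equivalently, one can invoke Lemma \ref{lem:ad-non-0} for nonvanishing and Lemma \ref{1localproperty} to see the bracket stays in $\bigoplus_{k\geq 2} W_k$. Either way it is not in $W_1$, contradicting $\mathrm{ad}_{X'}(W_1)\subseteq W_1$.

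Therefore $X' = 0$ and $X = X_1 \in W_1$. The only delicate step is the bookkeeping that $\mathrm{ad}_{\tilde\sigma_I}$ preserves each $W_k$ and does not cancel coefficients. Both facts follow from Lemma \ref{1localproperty} and the involution property \eqref{eqn:IIJ=4J}, so no serious obstacle is expected.
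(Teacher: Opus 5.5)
Your proof is correct and takes essentially the same route as the paper. Both pick a weight-$\geq 2$ term $\tilde\sigma_K$ and a weight-one $\tilde\sigma_I$ that anticommutes with it at one site. Both then use Lemma \ref{lem:ad-non-0} and Lemma \ref{1localproperty} to show $[X,\tilde\sigma_I]$ has a nonzero component outside $W_1$; the only cosmetic difference is that you subtract off the $W_1$-part of $X$ first and handle all higher weights together, whereas the paper works with the single component $X_{k_0}$.
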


    \noindent
    {\bf Proof}:
    Write $X = \sum_{I \in \mathcal{I}_N^0} a_I\tilde\sigma_I = \sum_{k=1}^N X_k$ with $a_I \in \mathbb{R}$
    and $X_k \in W_k$.
   Suppose there exists $I_0$ with $k_0:=w(\tilde\sigma_{I_0}) > 1$ such that $a_{I_0} \neq 0$.  We can write $\tilde\sigma_{I_0} = -i\sigma_{i_1} \otimes \cdots \otimes \sigma_{i_N}$, where at least one component, say $\sigma_{i_u}$, is distinct from $\sigma_0$. We can then choose $\sigma_{j_u}$ with
   $j_u \in \{1,2,3\}$ such that $[\sigma_{i_u},\sigma_{j_u}] \neq 0$.
    This implies that if we choose
    \[ \tilde\sigma_{J_0} = -i\sigma_0 \otimes \cdots \otimes\sigma_0 \otimes \sigma_{j_u} \otimes\sigma_0\otimes \cdots \otimes \sigma_0 \in W_1,\]
    then
    $[\tilde\sigma_{I_0},\tilde\sigma_{J_0}] \neq 0. $
    By Lemma \ref{lem:ad-non-0}, $[X_{k_0}, \tilde\sigma_{J_0}] \neq 0$.
    By Lemma \ref{1localproperty}, $[X_k, \tilde\sigma_{J_0}] \in W_k$ for all $k$.
     Hence $[X, \tilde\sigma_{J_0}] \notin W_1$, which contradicts the assumption that
     $\mathrm{ad}_X(W_1) \subseteq W_1$.
     Therefore we must have $X \in W_1$. This completes the proof of the lemma.
     $\Box$

    We are now ready to prove Theorem \ref{isotropycomponent}.

    {\bf Proof of Theorem \ref{isotropycomponent}}:
    Since $\mathrm{SU}(2^N)$ is a compact connected simple Lie group,
    by Corollary \ref{cor:simple-component}, we only need to prove equation \eqref{eqn:Isom-e-0}.
    By Proposition \ref{simpleLiegp}, we have
    \begin{equation}
    \mathrm{Isom}_e(\mathrm{SU}(2^N), g)_0 \subseteq
                \mathrm{Inn}(\mathrm{SU}(2^N)) \cap \mathrm{Isom}(\mathrm{SU}(2^N), g).
    \end{equation}
    \noindent Therefore, to compute $\mathrm{Isom}_e(\mathrm{SU}(2^N), g)_0$, it suffices to consider
    inner automorphisms on $\mathrm{SU}(2^N)$ which preserve the metric $g$.
    Let $G= \{U \in \mathrm{SU}(2^N) \mid \mathcal{A}\mathrm{d}_U^* g = g\}$. Clearly $G$ is a closed subgroup of $\mathrm{SU}(2^N)$ and $\mathrm{Isom}_e(\mathrm{SU}(2^N), g)_0 = \overline{\mathcal{A}\mathrm{d}}(G_0)$,
     where $G_0$ is the identity component of $G$.
    	
    By condition \eqref{cond:V1_invariant}, $\mathrm{ad}_X(W_1) \subseteq W_1$ for all $X \in \mathfrak{g}$,
    where $\mathfrak{g}$ is the Lie algebra of $G$.
    So by Lemma \ref{preserve1local}, $\mathfrak{g} \subseteq W_1$.
    Consequently,  by Lemma \ref{lem:LieAlgSU2N}, $G_0 \subseteq \mathrm{SU}(2)^{\otimes N}$.
    Conversely, by Proposition~\ref{prop:local_unitary_isometry}, $\mathrm{SU}(2)^{\otimes N} \subseteq G_0$.
    Hence $G_0 = \mathrm{SU}(2)^{\otimes N}$. The theorem is thus proved.
     $\Box$

    Theorem \ref{isotropycomponent}  requires Condition \eqref{cond:V1_invariant} on the penalty metric. We want to show that this condition holds for common penalty metrics. We first establish the following proposition:

    \begin{proposition}\label{condition_isotropy}
    Let $g = g_{\bf q}$ be a penalty metric on $\mathrm{SU}(2^N)$ with penalty factors ${\bf q}=(q_1, \cdots,  q_N)$. If $q_1 \neq q_i$ for all $2 \leq i \leq N$, then $g$ satisfies Condition \eqref{cond:V1_invariant}. In particular, the binomial and exponential metrics (see Examples \ref{ex:Binomial} and \ref{ex:exponential}) satisfy Condition \eqref{cond:V1_invariant}.
    \end{proposition}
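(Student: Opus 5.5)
The plan is to characterize $W_1$ intrinsically as an eigenspace of a linear operator that every $g$-preserving automorphism must commute with. Let $g_1$ be the penalty metric with all penalty factors equal to $1$. By Lemma \ref{lem:AdInv} it is bi-invariant. Define the $g_1$-self-adjoint operator $A:\mathfrak{su}(2^N)\to\mathfrak{su}(2^N)$ by $g(X,Y)=g_1(AX,Y)$. Since the $\tilde\sigma_I$ are orthogonal for both metrics and $g(\tilde\sigma_I,\tilde\sigma_I)=q_{w(\tilde\sigma_I)}$, we have $A|_{W_k}=q_k\,\mathrm{Id}$. The hypothesis $q_1\neq q_i$ for $i\ge 2$ means precisely that the $q_1$-eigenspace of $A$ is exactly $W_1$.

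The key step is to show that for every $h\in\mathrm{Aut}(\mathrm{SU}(2^N))$, the differential $\phi:=h_{*e}$ is a linear isometry of $g_1$. By equation \eqref{eqn:AutSU(n)} (with $n=2^N\ge 4$), every automorphism is either $\mathcal{A}{\rm d}_U$ or $\mathcal{A}{\rm d}_U$ composed with complex conjugation $c$.
\begin{itemize}
\item $\mathrm{Ad}_U$ preserves $g_1$ because $g_1$ is bi-invariant.
\item For $c$, note that $\overline{\sigma_j}=\sigma_j$ for $j=0,1,3$ and $\overline{\sigma_2}=-\sigma_2$. Hence $c_{*e}(\tilde\sigma_I)=\overline{-i\sigma_I}=\pm\tilde\sigma_I$, so $c_{*e}$ maps the $g_1$-orthonormal basis $\{\tilde\sigma_I\}$ to itself up to signs and is a $g_1$-isometry.
\end{itemize}
Alternatively, $g_1$ is a negative multiple of the Killing form, which every automorphism preserves. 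This is the one step needing real input; everything else is formal.

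Now let $h\in\mathrm{Aut}(\mathrm{SU}(2^N),g)$. By Lemma \ref{automorphism_isometry}, $\phi$ preserves $g$, and by the key step it also preserves $g_1$. Then for all $X,Y$,
\[
g_1(A\phi X,\phi Y)=g(\phi X,\phi Y)=g(X,Y)=g_1(AX,Y)=g_1(\phi AX,\phi Y).
\]
Since $\phi$ is invertible, this gives $A\phi=\phi A$. Hence $\phi$ preserves each eigenspace of $A$, in particular $W_1$, and invertibility gives $\phi(W_1)=W_1$. This proves Condition \eqref{cond:V1_invariant}.

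It remains to check the hypothesis for the two examples.
\begin{itemize}
\item Exponential metric: $q_k=x^{2k}$ with $x>1$ is strictly increasing in $k$, so $q_k\neq q_1$ for $k\ge 2$.
\item Binomial metric: since $t\mapsto t^\alpha$ is injective, it suffices to show $\binom{N}{k}3^k\neq 3N$ for $k\ge 2$. For $2\le k\le N-1$ we have $\binom{N}{k}\ge N$, so $\binom{N}{k}3^k\ge 9N>3N$. For $k=N\ge 2$ we have $3^N>3N$.
\end{itemize}
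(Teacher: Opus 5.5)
Your proposal is correct and follows essentially the same route as the paper. You show that every automorphism of $\mathrm{SU}(2^N)$ preserves the bi-invariant metric $g_1$, deduce that $h_{*e}$ commutes with the operator relating $g$ to $g_1$ (the paper's $\Psi$, your $A$), identify $W_1$ as its $q_1$-eigenspace, and check the binomial and exponential metrics the same way; the only cosmetic difference is that you verify complex conjugation preserves $g_1$ directly via $\tilde\sigma_I \mapsto \pm\tilde\sigma_I$, whereas the paper identifies $g_1$ with a multiple of the trace form $\mathrm{tr}(X^\dagger Y)$ using uniqueness of bi-invariant metrics.
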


    \noindent
    {\bf Proof}:
    Let $g_1$ be the  penalty metric with all penalty factors equal to $1$.
    By the uniqueness of bi-invariant metrics on compact simple Lie groups (see
    \cite[Proposition 2.48]{AB}), up to a scaling factor, $g_1$ is the same as the bi-invariant metric
    defined by the inner product $\langle X, \, Y \rangle = \mathrm{tr}(X^\dagger Y)$ for all
    $X,Y \in \mathfrak{su}(2^N)$, where $X^\dagger$ denotes the complex conjugate of the transpose of $X$.
    Since inner automorphisms and the complex conjugation preserve $\langle \cdot, \, \cdot \rangle$, by
    equation \eqref{eqn:AutSU(n)}, we have
    $ \mathrm{Aut}(\mathrm{SU}(2^N)) \subseteq \mathrm{Isom}_e(\mathrm{SU}(2^N), g_1).$

    Let $\Psi: \mathfrak{su}(2^N) \to \mathfrak{su}(2^N)$ be the linear isomorphism defined by
    \begin{equation} \label{eqn:CalG}
    \Psi(X) := q_k X
    \end{equation}
    for all $X \in W_k$ and  $k=1, \ldots, N$.
    Then
    $g(X,Y) = g_1(X,\Psi(Y))$ for all $X,Y \in \mathfrak{su}(2^N)$.
    Therefore, for any $h \in \mathrm{Aut}(\mathrm{SU}(2^N), g)$, we have
    \[
    \begin{aligned}
    g_1(h_{*e}X,\Psi(h_{*e}Y))  &= g(h_{*e}X,h_{*e}Y)
    = g(X,Y) = g_1(X,\Psi(Y))
    = g_1(h_{*e}X,h_{*e}(\Psi(Y))),
    \end{aligned}
    \]
    for all $X,Y \in \mathfrak{su}(2^N)$. Since $g_1$ is non-degenerate, this implies
    \[
    h_{*e} \circ \Psi = \Psi \circ h_{*e}.
    \]
    \noindent Thus $h_{*e}$ preserves the eigenspaces of $\Psi$.

    If $q_1 \neq q_i$ for all $2 \leq i \leq N$,
    the eigenspace of $\Psi$ with eigenvalue $q_1$ is precisely $W_1$. Consequently,
    $h_{*e}(W_1) = W_1$, i.e. condition \eqref{cond:V1_invariant} is satisfied.

    Now it only remains to verify that $q_1 \neq q_k$ for all $2 \leq k \leq N$ for the exponential
    and  binomial metrics.

    For the exponential metric, the penalty factors are given by $q_k = x^{2k}$, where $x > 1$ is a constant. Since $x^{2k}$ is strictly increasing as $k$ increases, it follows immediately that $q_1 \neq q_k$ for all $2 \leq k \leq N$.

    For the binomial metric, the penalty factors are given by $q_k = \left(\binom{N}{k}3^k\right)^{\alpha}$ for $k = 1, \dots, N$, where $\alpha > 0$ is a constant. Since the function $f(x) = x^\alpha$ is strictly increasing for
    $x > 0$, it suffices to check the case $\alpha=1$.
    In this case, $q_N = 3^N > 3N$ if $N >1$. For $2 \leq k < N$,
    $q_k = \binom{N}{k}3^k > 3N$  since  $\binom{N}{k} \geq N$ and $3^k > 3$.
    Therefore, $ q_k > 3N = q_1$ for all $2 \leq k \leq N$.
    The proposition is thus proved.
     $\Box$

    Note that the cliff metric does not satisfy the condition in Proposition \ref{condition_isotropy}. To prove
    that the cliff metric also satisfies condition \eqref{cond:V1_invariant}, we need the following lemma: 

    \begin{lemma} \label{lem:W2W2-W3}
    Let $\pi_k: \mathfrak{su}(2^N) \to W_k$ be the natural projection associated with the decomposition $\mathfrak{su}(2^N) = \bigoplus_{k=1}^N W_k$. Assume $N \geq 3$.
    For any $X \in \mathfrak{su}(2^N)$ with $\pi_2(X) \neq 0$, there exists $Y \in W_2$ such that $\pi_3([X,Y]) \neq 0$.
    \end{lemma}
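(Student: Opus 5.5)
Pick a weight-2 term of $X$ with nonzero coefficient, choose a weight-2 partner $Y=\tilde\sigma_J$ that "grows" it to weight 3, and then check that no other part of $X$ can cancel the resulting weight-3 component.

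Write $X=\sum_I a_I\tilde\sigma_I$ with $a_I\in\mathbb{R}$. Since $\pi_2(X)\neq 0$, there is $I_0$ with $w(\tilde\sigma_{I_0})=2$ and $a_{I_0}\neq 0$. Say $I_0$ is supported on positions $u<v$ with $i_u,i_v\in\{1,2,3\}$. Because $N\ge 3$, choose a third position $r\notin\{u,v\}$. Let $J$ be supported on $\{u,r\}$, with $j_u\in\{1,2,3\}$, $j_u\neq i_u$, and $j_r$ arbitrary nonzero, say $j_r=1$. Put $Y=\tilde\sigma_J\in W_2$. Then $s(I_0,J)=1$, so by Lemma~\ref{Paulimainthm} the bracket $[\tilde\sigma_{I_0},\tilde\sigma_J]$ is a nonzero multiple of $\tilde\sigma_K$. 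Here $K$ has components $k_u\neq 0$ (product of two anticommuting Paulis), $k_v=i_v$ and $k_r=j_r$, so $w(\tilde\sigma_K)=3$.

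The main step is to show that no other $I$ with $a_I\neq 0$ contributes a multiple of $\tilde\sigma_K$ to $[X,Y]$. By Lemma~\ref{Paulimainthm} and Remark~\ref{rem:IJK}, $[\tilde\sigma_I,\tilde\sigma_J]$ is either $0$ or a multiple of $\tilde\sigma_{K'}$, where $\sigma_{K'}\propto\sigma_I\sigma_J$. Since $\sigma_J^2=\sigma_0^{\otimes N}$, the index $K'$ determines $I$ via $\sigma_I\propto\sigma_{K'}\sigma_J$. Hence $K'=K$ forces $I=I_0$.

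It follows that the coefficient of $\tilde\sigma_K$ in $[X,Y]$ is $\pm 2a_{I_0}\neq 0$, and therefore $\pi_3([X,Y])\neq 0$. This is where the generalized Pauli matrices help: they form a basis, and left multiplication by $\sigma_J$ permutes them up to scalars, so the bracket with $\tilde\sigma_J$ sends distinct basis elements to distinct basis directions or to zero. I expect the only delicate point to be verifying that $w(\tilde\sigma_K)=3$, which needs $k_u\neq0$ and uses the third qubit $r$; that is exactly where the hypothesis $N\ge 3$ enters.
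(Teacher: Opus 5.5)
Your proof is correct and follows the paper's argument essentially step for step. You make the same choice of $J$, supported on $\{u,r\}$ with $j_u\notin\{0,i_u\}$, so that $[\tilde\sigma_{I_0},\tilde\sigma_J]$ has weight 3, and then you show that no other term of $X$ can cancel this weight-3 component. The only difference is cosmetic: the paper gets the non-cancellation from the linear independence of the brackets $[\tilde\sigma_J,\tilde\sigma_I]$, using $[\sigma_J,[\sigma_J,\sigma_I]]=4\sigma_I$, while you get it from $\sigma_J^2=\sigma_0^{\otimes N}$ making $I\mapsto K'$ injective, which amounts to the same fact.
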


    \noindent
    {\bf Proof}:
    Write $X = \sum_{I \in \mathcal{I}_N^0} a_I \tilde\sigma_I$ with $a_I \in \mathbb{R}$. Since
    $\pi_2(X) \neq 0$, there exists $I_0 = (i_1, \ldots, i_N) \in \mathcal{I}_N^0$ such that $a_{I_0} \neq 0$ and $w(\tilde\sigma_{I_0}) = 2$. There exist $1 \leq u < v \leq N$ such that $i_k \neq 0$ if and only if
    $k \in \{u, v\}$.
    We can choose $J=(j_1, \ldots, j_N) \in \mathcal{I}_N^0$ such that $j_u \notin \{0, i_u\}$, $j_t \neq 0$
    for some $t \notin \{u, v\}$, and $j_l=0$ for all $l \notin \{u, t\}$.
    Then by Lemma \ref{Paulimainthm}, $w(\tilde\sigma_J)=2$ and
    \begin{equation} \label{eqn:JI0-W3}
    0 \neq [\tilde\sigma_J, \tilde\sigma_{I_0}] \in W_3.
    \end{equation}

    Let $\mathcal{J}_1 := \{I \in \mathcal{I}_N^0 \mid [\tilde\sigma_J, \tilde\sigma_{I}] = 0 \}$
    and $\mathcal{J}_2 := \{I \in \mathcal{I}_N^0 \mid [\tilde\sigma_J, \tilde\sigma_{I}] \neq 0 \}$.
    By equation~\eqref{eqn:IIJ=4J}, vectors $\{ [\tilde\sigma_J, \tilde\sigma_{I}] \mid I \in \mathcal{J}_2 \}$
    are linearly independent.
    Since $[\tilde\sigma_J, X] = \sum_{I \in \mathcal{J}_2} a_I [\tilde\sigma_J, \tilde\sigma_{I}]$,
    equation \eqref{eqn:JI0-W3} implies that $\pi_3([\tilde\sigma_J, X]) \neq 0$.
    Consequently, taking $Y = \tilde\sigma_{J} \in W_2$, we have $\pi_3([X,Y]) \neq 0$.
    The lemma is thus proved.
    $\Box$

    Note that the cliff metric is bi-invariant if $N=2$.

    \begin{proposition}\label{cliff_metric_preserve_locality}
    Assume $N \geq 3$. Let $g_q$ ($q > 1$) be the cliff metric on $\mathrm{SU}(2^N)$ defined in Example \ref{ex:Cliff}. Then $g_q$ satisfies Condition \eqref{cond:V1_invariant}.
    \end{proposition}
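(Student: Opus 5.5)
The plan is to reuse the eigenspace argument from the proof of Proposition \ref{condition_isotropy}, and then to recover $W_1$ inside the resulting eigenspace by a description that involves only the Lie bracket. Let $h \in \mathrm{Aut}(\mathrm{SU}(2^N), g_q)$ and write $\phi := h_{*e}$. Then $\phi$ is a Lie algebra automorphism of $\mathfrak{su}(2^N)$ and a linear isometry of $g_q$.

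First, exactly as in the proof of Proposition \ref{condition_isotropy}, $h$ preserves the bi-invariant metric $g_1$, since every automorphism of $\mathrm{SU}(2^N)$ does. Define $\Psi$ by equation \eqref{eqn:CalG}, so that $g_q(X,Y) = g_1(X, \Psi(Y))$. The same computation as there gives $\phi \circ \Psi = \Psi \circ \phi$. For the cliff metric, $\Psi$ has only two eigenvalues, $1$ and $q$, because $q > 1$. The corresponding eigenspaces are
\[ V := W_1 \oplus W_2, \qquad V' := \bigoplus_{k \geq 3} W_k . \]
Hence $\phi(V) = V$ and $\phi(V') = V'$.

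The key step is the intrinsic characterization
\[ W_1 = \{ X \in V \mid \mathrm{ad}_X(V) \subseteq V \}. \]
Once this holds, $\phi$ preserves the right-hand side: $\phi$ preserves $V$ and satisfies $\mathrm{ad}_{\phi(X)} = \phi \circ \mathrm{ad}_X \circ \phi^{-1}$. So $\phi(W_1) \subseteq W_1$, and equality follows by comparing dimensions.

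To prove the characterization, I will check two inclusions.
\begin{itemize}
\item For $X \in W_1$, Lemma \ref{1localproperty} gives $\mathrm{ad}_X(W_k) \subseteq W_k$ for every $k$. In particular $\mathrm{ad}_X(V) \subseteq V$.
\item Conversely, let $X \in V \setminus W_1$. Then $\pi_2(X) \neq 0$. Since $N \geq 3$, Lemma \ref{lem:W2W2-W3} provides $Y \in W_2 \subseteq V$ with $\pi_3([X,Y]) \neq 0$. Thus $[X,Y] \notin V$, so $\mathrm{ad}_X(V) \not\subseteq V$.
\end{itemize}

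The main obstacle is the second inclusion, namely showing that weight-two elements necessarily generate weight-three components. This is precisely what Lemma \ref{lem:W2W2-W3} supplies, and it is where the hypothesis $N \geq 3$ is used. The rest of the argument is formal.
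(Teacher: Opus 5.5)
Your proof is correct and follows the paper's argument. The paper uses the same $\Psi$-eigenspace splitting $V_1 = W_1 \oplus W_2$, $V_2 = \bigoplus_{k \geq 3} W_k$ and combines Lemma \ref{1localproperty} with Lemma \ref{lem:W2W2-W3} in a proof by contradiction, which amounts to your characterization $W_1 = \{X \in V \mid \mathrm{ad}_X(V) \subseteq V\}$; you simply state that characterization explicitly.
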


    \noindent
    {\bf Proof}:
        As in the proof of Proposition \ref{condition_isotropy}, for any $h \in \mathrm{Aut}(\mathrm{SU}(2^N), g_q)$,
    its differential $h_{*e}$  preserves the eigenspaces of the linear isomorphism $\Psi$
    defined by equation \eqref{eqn:CalG}.
    Let
    \[ V_1 := W_1 \oplus W_2, \hspace{20pt} V_2 := \bigoplus_{k=3}^N W_k.\]
    Then $V_1$ and $V_2$ are the eigenspaces of $\Psi$ with eigenvalues $1$ and $q$ respectively. Hence we have
    \[
    h_{*e}(V_1) = V_1, \quad \text{and} \quad h_{*e}(V_2) = V_2.
    \]
    Since $h_{*e}$ is an isomorphism, to show $h_{*e}(W_1) = W_1$, it suffices to prove
    $h_{*e}(W_1) \subset W_1$.

    Suppose, for the sake of contradiction, that $h_{*e}(W_1) \not \subseteq W_1$. Then there exists  $X \in W_1$ such that $h_{*e}(X) \in V_1$ but $\pi_2(h_{*e}(X) ) \neq 0$,  where $\pi_k$ is the map defined in Lemma \ref{lem:W2W2-W3}. By Lemma \ref{lem:W2W2-W3}, there exists  $Y \in W_2 \subset V_1$ such that $\pi_3([h_{*e}(X), Y]) \neq 0$.

    Since $h_{*e}(V_1) = V_1$, there exists $Z \in V_1$ such that $h_{*e}(Z) = Y$.
     Since $X \in W_1$, Lemma \ref{1localproperty} guarantees that $[X,Z] \in V_1$. Since $h_{*e}$ preserves $V_1$, it follows that $h_{*e}([X,Z]) \in V_1$.
    Because $h$ is a Lie group automorphism, its differential $h_{*e}$ is a Lie algebra isomorphism. Hence we have
    \[
    [h_{*e}(X),Y] = [h_{*e}(X),h_{*e}(Z)] = h_{*e}([X,Z]) \in V_1,
    \]
    which contradicts the fact that $\pi_3([h_{*e}(X), Y]) \neq 0$.
    This completes the proof of the proposition.
     $\Box$

\section{Normalizer of $\mathrm{Isom}_e(\mathrm{SU}(2^N), g)_0$ in
            $\mathrm{O}(\mathfrak{su}(2^N), g)$}

     For a penalty metric $g$ on $\mathrm{SU}(2^N)$ satisfies Condition~\eqref{cond:V1_invariant}, we have computed the identity component of
    $\mathrm{Isom}_e(\mathrm{SU}(2^N), g)$ in Theorem \ref{isotropycomponent}.
    To compute $\mathrm{Isom}_e(\mathrm{SU}(2^N), g)$, we first embed this group into  $\mathrm{O}(\mathfrak{su}(2^N), g)$, which
    is the group of all linear transformations on  $\mathfrak{su}(2^N) = T_e \mathrm{SU}(2^N)$ preserving the metric
      induced by $g$. First, there is a natural group homomorphism
      \[ \mathcal{L}: \mathrm{Isom}_e(\mathrm{SU}(2^N), g) \longrightarrow \mathrm{O}(\mathfrak{su}(2^N), g) \]
    defined by $\mathcal{L}(\varphi) = \varphi_{*e}$, which is the differential of $\varphi$ at the identity element $e$.
    By a basic property of isometries (see \cite[Lemma 11.2 in Chapter I]{Helgason}),  for any $\varphi, \psi \in \mathrm{Isom}_e(\mathrm{SU}(2^N), g)$, $\varphi = \psi$ if and only if
    $\varphi_{*e} = \psi_{*e}$. Hence $\mathcal{L}$ is injective and
    $\mathrm{Isom}_e(\mathrm{SU}(2^N), g)$ is isomorphic to the subgroup
    $\mathcal{L}(\mathrm{Isom}_e(\mathrm{SU}(2^N), g)) \subset \mathrm{O}(\mathfrak{su}(2^N), g)$.
    By Theorem \ref{isotropycomponent},
    \[
    \mathcal{L}(\mathrm{Isom}_e(\mathrm{SU}(2^N), g)_0)
    = \mathcal{L}(\overline{\mathcal{A}\mathrm{d}}(\mathrm{SU}(2)^{\otimes N}))
    = \overline{\mathrm{Ad}}(\mathrm{SU}(2)^{\otimes N}),
    \]
    where
    \[ \overline{\mathrm{Ad}}(\mathrm{SU}(2)^{\otimes N})
        := \{{\rm Ad}_x \in {\rm Inn}(\mathfrak{su}(2^N)) \mid x \in \mathrm{SU}(2)^{\otimes N}\}. \]
    Let 
    \begin{equation} \label{eqn:CalGH}
     \mathcal{G} := \mathrm{O}(\mathfrak{su}(2^N), g), \hspace{20pt}
    \mathcal{H} := \overline{\mathrm{Ad}}(\mathrm{SU}(2)^{\otimes N}).
    \end{equation}
    Note that $\mathcal{H}$ is the identity component
    of $\mathcal{L}(\mathrm{Isom}_e(\mathrm{SU}(2^N), g))$, which
    is a subgroup of $\mathcal{G}$. We now construct some possible candidates
    for other elements in $\mathcal{L}(\mathrm{Isom}_e(\mathrm{SU}(2^N), g))$.

   Let $S_N$ be the group of all permutations of the set $\{1, \ldots, N\}$. For any $\tau \in S_N$,
   let $\psi_\tau: \mathfrak{su}(2^N) \to \mathfrak{su}(2^N)$ be the linear map defined by
   \begin{equation} \label{eqn:psitau}
    \psi_\tau ( i \, \sigma_{j_1} \otimes \cdots \otimes \sigma_{j_N})
            := i \, \sigma_{j_{\tau(1)}} \otimes \cdots \otimes \sigma_{j_{\tau(N)}}
    \end{equation}
   for all $(j_1, \ldots, j_N) \in \mathcal{I}_N^0$.
   Let
   \begin{equation} \label{eqn:Sbar}
    \overline{S}_N := \{ \psi_\tau \mid \tau \in S_N\}.
    \end{equation}
    Then $\overline{S}_N$ is a group
   isomorphic to $S_N$.
   Let
   \begin{equation} \label{eqn:CalP}
    \mathcal{P} := \bigcup_{k=1}^N \{(i_1, \ldots, i_k) \mid 1 \leq i_1 < i_2 < \cdots < i_k \leq N\}.
    \end{equation}
   For any $I=(i_1, \ldots, i_k) \in \mathcal{P}$, define
    \begin{equation}\label{def_irr}
    V_I := \mathrm{Span}_{\mathbb{R}}\{-i\sigma_{j_1 }\otimes \cdots \otimes \sigma_{j_N}
          \mid j_t \neq 0 \,\,\, \text{\rm if and only if} \,\,\, t \in \{i_1, \cdots, i_k\} \}.
    \end{equation}
    Then $\mathfrak{su}(2^N) = \bigoplus_{I \in \mathcal{P}} V_I$.
    For any map $\epsilon: \mathcal{P} \to \{1, -1\}$, let
    $\phi_\epsilon: \mathfrak{su}(2^N) \to \mathfrak{su}(2^N)$ be the linear map defined by
    \begin{equation} \label{eqn:phiepsilon}
     \phi_\epsilon \left( \sum_{I \in \mathcal{P}} X_I \right) :=  \sum_{I \in \mathcal{P}} \epsilon(I) X_I
     \end{equation}
    for all $X_I \in V_I$. Let
    \begin{equation} \label{eqn:Phi}
     \Phi := \{ \phi_\epsilon \mid \epsilon: \mathcal{P} \to \{1, -1\} \}.
     \end{equation}
    Then $\Phi$ is a group isomorphic to $\mathbb{Z}_2^{2^N - 1}$.

    For any $\tau \in S_N$, $\epsilon: \mathcal{P} \to \{1, -1\}$, $J \in \mathcal{I}_N^0$,
    both $\psi_\tau(\tilde\sigma_J)$ and $\phi_\epsilon(\tilde\sigma_J)$ have the form $\pm \tilde\sigma_K$
    for some $K \in \mathcal{I}_N^0$ with $w(\tilde\sigma_K) = w(\tilde\sigma_J)$.
    Hence $\psi_\tau, \phi_\epsilon \in \mathcal{G}$. So we have
    \begin{equation}  \label{eqn:SPhiInG}
    \overline{S}_N \subset \mathcal{G}, \hspace{20pt}
    \Phi \subset \mathcal{G}.
    \end{equation}

    The main purpose of this section is to prove the following theorem: 

    \begin{theorem}\label{normalizer}
    Let $g$ be a penalty metric on $\mathrm{SU}(2^N)$ satisfying Condition \eqref{cond:V1_invariant}. Then
    \begin{equation}
      \mathcal{L}(\mathrm{Isom}_e(\mathrm{SU}(2^N), g))  \subseteq
        \left( \,\mathcal{H} \times \Phi \, \right) \rtimes \overline{S}_N
      \cong \left( \mathrm{SO}(3)^N \times \mathbb{Z}_2^{2^N - 1} \right) \rtimes S_N,
    \end{equation}
    where $\mathcal{H}$, $\overline{S}_N$ and $\Phi$ are defined by equations \eqref{eqn:CalGH}, \eqref{eqn:Sbar} and \eqref{eqn:Phi} respectively.
    \end{theorem}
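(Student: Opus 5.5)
Since $\mathcal{H}$ is the identity component of the compact group $\mathcal{L}(\mathrm{Isom}_e(\mathrm{SU}(2^N), g))$, it is normal in that group. So every element of $\mathcal{L}(\mathrm{Isom}_e)$ lies in the normalizer $N_{\mathcal{G}}(\mathcal{H})$. The plan is therefore to prove
\[
N_{\mathcal{G}}(\mathcal{H}) \subseteq (\mathcal{H}\times\Phi)\rtimes \overline{S}_N .
\]
The route is through the representation theory of $\mathcal{H}\cong \mathrm{SO}(3)^N$ on $\mathfrak{su}(2^N)$.

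\emph{Step 1: the isotypic decomposition.} By Lemma \ref{1localproperty} and the proof of Lemma \ref{lem:SU2SO3N}, the $j$-th factor $\mathrm{SO}(3)$ acts on the $j$-th tensor slot. On $\mathrm{span}\{\sigma_1,\sigma_2,\sigma_3\}$ it acts by the standard representation, and on $\sigma_0$ it acts trivially. Hence each $V_I$ from \eqref{def_irr} is the irreducible $\mathrm{SO}(3)^N$-module $\mathbb{R}^3_{i_1}\otimes\cdots\otimes\mathbb{R}^3_{i_k}$. These modules are pairwise non-isomorphic for distinct $I\in\mathcal{P}$, since their kernels (equivalently, which factors act nontrivially) differ. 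So $\mathfrak{su}(2^N)=\bigoplus_{I\in\mathcal P}V_I$ is multiplicity free. Each $V_I$ is absolutely irreducible (its complexification is a tensor product of the irreducible $\mathbb{C}^3$'s), so its real commutant is $\mathbb{R}$.

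\emph{Step 2: an element of the normalizer induces an automorphism of $\mathcal{H}$.} Let $T\in N_{\mathcal G}(\mathcal H)$. Conjugation by $T$ is a Lie group automorphism $\alpha$ of $\mathcal{H}\cong\mathrm{SO}(3)^N$. Since $\mathrm{SO}(3)$ is simple with no outer automorphisms, $\alpha$ permutes the simple factors by some $\tau\in S_N$ and is inner on each factor. Composing $T$ with $\psi_\tau^{-1}$ (and adjusting the convention of $\tau$ as needed) and with a suitable element $h\in\mathcal H$, we may assume $\alpha=\mathrm{id}$. Here we check directly that $\psi_\tau$ normalizes $\mathcal{H}$ and induces the corresponding factor permutation: $\psi_\tau\,\mathrm{Ad}_{U_1\otimes\cdots\otimes U_N}\,\psi_\tau^{-1}$ is again $\mathrm{Ad}$ of a permuted tensor product.

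\emph{Step 3: elements commuting with $\mathcal{H}$.} Now $T' = h^{-1}\psi_\tau^{-1}T$ commutes with $\mathcal{H}$. By Step 1 and Schur's lemma, $T'$ preserves each $V_I$ and acts on it by a real scalar $c_I$. Since $T'$ is $g$-orthogonal and $g$ restricts to a positive multiple of $g_1$ on each $V_I$, we get $c_I=\pm1$. Thus $T'=\phi_\epsilon\in\Phi$, and $T\in \psi_\tau\mathcal{H}\Phi$.

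\emph{Step 4: group structure.} Elements of $\Phi$ act by scalars on the $V_I$, so they commute with $\mathcal{H}$. Also $\mathcal{H}\cap\Phi$ is trivial: an element of $\mathcal H$ acting by $\pm1$ on every $V_I$ acts by $\pm1$ on each $W_1$-slot, and this forces it to be the identity in $\mathrm{SO}(3)^N$, because $-I\notin\mathrm{SO}(3)$. Moreover $\overline{S}_N$ normalizes both $\mathcal H$ and $\Phi$ (it permutes the $V_I$), and meets $\mathcal H\times\Phi$ trivially, since a nontrivial $\psi_\tau$ moves some $V_I$. This yields the semidirect product and the stated isomorphism, with $\mathcal H\cong\mathrm{SO}(3)^N$ by Lemma \ref{lem:SU2SO3N}.

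The main obstacle is Step 2: making rigorous that an automorphism of $\mathrm{SO}(3)^N$ permutes the factors and that the permutation is realized by $\psi_\tau$. To settle this I would work at the Lie algebra level. The simple ideals of $W_1\cong\mathfrak{so}(3)^N$ are exactly the $N$ slot copies of $\mathfrak{su}(2)$, and $\mathrm{Ad}_T$ must permute them. The automorphism group of $\mathfrak{so}(3)$ is $\mathrm{SO}(3)$, so the remaining twist is realized by $\mathcal H$ itself.
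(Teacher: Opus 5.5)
Your proposal is correct and follows essentially the same route as the paper: $\mathcal{L}(\mathrm{Isom}_e(\mathrm{SU}(2^N),g))\subseteq {\rm N}_\mathcal{G}(\mathcal{H})$, the outer automorphisms of $\mathcal{H}\cong\mathrm{SO}(3)^N$ are the factor permutations realized by $\overline{S}_N$, the centralizer is $\Phi$ by Schur's lemma applied to the multiplicity-free decomposition $\bigoplus_{I\in\mathcal{P}}V_I$, and the relevant intersections are trivial. The differences are only cosmetic. You argue element by element instead of through the short exact sequence, you detect the factor permutation via the simple ideals of $\mathfrak{so}(3)^N$ rather than via minimal normal subgroups of $\mathrm{SO}(3)^N$, and you check directly that $\psi_\tau$ normalizes $\mathcal{H}$ rather than first identifying $\overline{S}_N$ with the group generated by the $\mathrm{Ad}_{U_{kl}}$.
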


    To prove this theorem,  we need the following lemma:

    \begin{lemma}\label{normalizer_basic_lemma}
    Let $G$ be a  Lie group and $H \subseteq G$ be a closed connected subgroup. If $W \subseteq G$ is a subgroup with identity component $W_0 = H$, then $W \subseteq {\rm N}_G(H)$, where
    \[ {\rm N}_G(H):=\{ x \in G \mid x H x^{-1} \subset H\}\]
     is the normalizer of $H$ in $G$.
    \end{lemma}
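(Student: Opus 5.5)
The plan is to use the standard fact that the identity component of any topological group is a normal subgroup, together with the hypothesis $W_0 = H$. Fix $x \in W$. The map $c_x: W \to W$, $c_x(y) = x y x^{-1}$, is a homeomorphism of $W$ onto itself, since it is a continuous group automorphism with continuous inverse $c_{x^{-1}}$. It also fixes the identity $e$. A homeomorphism fixing $e$ sends the connected component of $e$ onto itself, so $c_x(W_0) = W_0$. Hence $x H x^{-1} = c_x(H) = H$, and therefore $x \in {\rm N}_G(H)$. Since $x$ was arbitrary, $W \subseteq {\rm N}_G(H)$.

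The only subtle point is the topology on $W$, because $W$ is an arbitrary subgroup and need not be closed. I will take $W$ with the subspace topology from $G$ and interpret $W_0$ as the connected component of $e$ in that topology; this is the reading under which the hypothesis $W_0 = H$ makes sense. Continuity of conjugation on $W$ follows immediately from continuity of multiplication and inversion in $G$.

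Alternatively, I can argue directly. For $x \in W$, the set $xHx^{-1}$ is the image of the connected set $H$ under the continuous map $c_x$, so it is connected. It lies in $W$ and contains $e$, hence it is contained in the identity component $W_0 = H$. This already gives $xHx^{-1} \subset H$, which is exactly the defining condition of ${\rm N}_G(H)$ as stated. I expect no real obstacle here; closedness of $H$ is not even needed for the argument.
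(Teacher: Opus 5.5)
Your proof is correct, and your second argument is exactly the paper's: $xHx^{-1}$ is a connected subset of $W$ containing $e$, so it lies in $W_0 = H$. Your first argument, that conjugation by $x$ is a homeomorphism of $W$ fixing $e$ and therefore preserves $W_0$, is the same idea and gives the slightly stronger equality $xHx^{-1} = H$.
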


    \noindent
    {\bf Proof}:
    For any $x \in W$, we want to show that $x H x^{-1} \subseteq H$. Since the map
    $H \rightarrow W$ defined by $h \mapsto x h x^{-1}$ is continuous and $H$ is connected, $xHx^{-1}$ is a connected subset of $W$ which contains the identity element $e$. Hence $xHx^{-1} \subseteq W_0 =  H$.
     $\Box$

    Applying this lemma to the special case where  
    $G$ and $H$ are  respectively $\mathcal{G}$ and  $\mathcal{H}$ defined by equation \eqref{eqn:CalGH},
    and $W=\mathcal{L}(\mathrm{Isom}_e(\mathrm{SU}(2^N), g))$,
    we have
    \begin{equation} \label{eqn:IsomNormal}
    \mathcal{L}(\mathrm{Isom}_e(\mathrm{SU}(2^N), g)) \subseteq {\rm N}_\mathcal{G} (\mathcal{H}).
     \end{equation}
    In the rest part of this section, we will compute the normalizer ${\rm N}_\mathcal{G} (\mathcal{H})$.

    Let $\theta: {\rm N}_\mathcal{G} (\mathcal{H}) \longrightarrow \mathrm{Aut}(\mathcal{H})$ be the group homomorphism defined by $\theta(x)(h) := x h x^{-1}$ for $x \in  {\rm N}_\mathcal{G} (\mathcal{H})$ and $h \in \mathcal{H}$.
    The kernel of $\theta$ is given by the centralizer of $\mathcal{H}$ in $\mathcal{G}$, i.e.
    \[   {\rm C}_\mathcal{G} (\mathcal{H}) :=
            \{ x \in  \mathcal{G} \mid x h x^{-1} = h \,\,\, {\rm for \,\,\, all \,\,\,} h \in \mathcal{H}\}.\]
    Let $\pi: \mathrm{Aut}(\mathcal{H}) \longrightarrow \mathrm{Out}(\mathcal{H})
        :=  \mathrm{Aut}(\mathcal{H})/\mathrm{Inn}(\mathcal{H})$ be the  natural projection,
        where $\mathrm{Out}(\mathcal{H})$ is the outer automorphism  group of $\mathcal{H}$.
    The kernel of $\pi \circ \theta:  {\rm N}_\mathcal{G} (\mathcal{H}) \longrightarrow \mathrm{Out}(\mathcal{H})$ is given by ${\rm C}_\mathcal{G} (\mathcal{H}) \cdot \mathcal{H}$, which is a normal subgroup of
    ${\rm N}_\mathcal{G} (\mathcal{H})$.
    Hence we have a short exact sequence of groups:

    \begin{equation}\label{shortexactsequence}
    1 \to {\rm C}_\mathcal{G} (\mathcal{H}) \cdot \mathcal{H} \to {\rm N}_\mathcal{G} (\mathcal{H}) \to
        {\rm N}_\mathcal{G} (\mathcal{H})/({\rm C}_\mathcal{G} (\mathcal{H}) \cdot \mathcal{H}) \to 1.
    \end{equation}

    \noindent Note that  ${\rm N}_\mathcal{G} (\mathcal{H})/({\rm C}_\mathcal{G} (\mathcal{H}) \cdot \mathcal{H})$ is isomorphic to
     $\pi \circ \theta ({\rm N}_\mathcal{G} (\mathcal{H}))$, which is a subgroup of $\mathrm{Out}(\mathcal{H})$.
    The computation of ${\rm N}_\mathcal{G} (\mathcal{H})$ will be divided into the following three steps:
    \begin{enumerate}
    \item[(i)] In Section 4.1, we will calculate $\mathrm{Out}(\mathcal{H})$.

    \item[(ii)] In Section 4.2, we calculate the centralizer ${\rm C}_\mathcal{G} (\mathcal{H})$ using representation theory.

    \item[(iii)] In Section 4.3, we prove 
    ${\rm N}_\mathcal{G} (\mathcal{H})/({\rm C}_\mathcal{G} (\mathcal{H}) \cdot \mathcal{H})$ is isomorphic to
      $\mathrm{Out}(\mathcal{H})$ and verify that the short exact sequence \eqref{shortexactsequence} splits. 
    \end{enumerate}

The proof of Theorem \ref{normalizer} will also be given in Section 4.3.

\subsection{Computing $\mathrm{Out}(\mathcal{H})$}

    We first give a more precise group structure for
    $\mathcal{H} = \overline{\mathrm{Ad}}(\mathrm{SU}(2)^{\otimes N})$.
        \begin{lemma} \label{lem:SU2SO3N}
        $\mathcal{H}  \cong \mathrm{SO}(3)^N$.
        \end{lemma}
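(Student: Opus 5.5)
We will exhibit $\mathcal{H}$ as the image of a surjective homomorphism from $\mathrm{SU}(2)^N$ and compute its kernel. Define
\[
\rho: \mathrm{SU}(2)^N \longrightarrow \mathrm{GL}(\mathfrak{su}(2^N)), \qquad \rho(U_1, \ldots, U_N) := \mathrm{Ad}_{U_1 \otimes \cdots \otimes U_N}.
\]
Since $(U_1 \otimes \cdots \otimes U_N)(V_1 \otimes \cdots \otimes V_N) = (U_1V_1) \otimes \cdots \otimes (U_NV_N)$, the map $\rho$ is a continuous group homomorphism. By definition its image is exactly $\mathcal{H} = \overline{\mathrm{Ad}}(\mathrm{SU}(2)^{\otimes N})$. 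Hence $\mathcal{H} \cong \mathrm{SU}(2)^N / \ker \rho$, and it remains to show $\ker\rho = \{\pm \sigma_0\}^N$. Then $\mathcal{H} \cong \prod_{j=1}^N \mathrm{SU}(2)/\{\pm\sigma_0\} \cong \mathrm{SO}(3)^N$, using the standard identification $\mathrm{SU}(2)/\{\pm \sigma_0\} = \mathrm{PSU}(2) \cong \mathrm{SO}(3)$ from equation \eqref{eqn:AutSU(n)}. Along the way this also shows that $\mathrm{SU}(2)^{\otimes N}$ is a finite quotient of $\mathrm{SU}(2)^N$, as promised in the earlier remark.

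The inclusion $\{\pm\sigma_0\}^N \subseteq \ker\rho$ is immediate. If each $U_j = \pm \sigma_0$, then $U_1 \otimes \cdots \otimes U_N = \pm \mathrm{Id}$ is central, so its adjoint action is trivial.

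For the reverse inclusion, let $(U_1, \ldots, U_N) \in \ker \rho$ and fix $j$. The key step is to test $\mathrm{Ad}$ on the weight-one elements supported at position $j$, namely $-i\,\sigma_0 \otimes \cdots \otimes \sigma_k \otimes \cdots \otimes \sigma_0$ with $\sigma_k$ in slot $j$ and $k \in \{1,2,3\}$. As in the proof of Lemma \ref{1localproperty}, this element is sent to
\[
-i\,\sigma_0 \otimes \cdots \otimes (U_j \sigma_k U_j^{-1}) \otimes \cdots \otimes \sigma_0 ,
\]
because $U_l \sigma_0 U_l^{-1} = \sigma_0$ for $l \neq j$. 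Triviality of $\rho(U_1,\ldots,U_N)$ therefore forces $U_j \sigma_k U_j^{-1} = \sigma_k$ for $k = 1, 2, 3$. Together with $\sigma_0$, the Pauli matrices span all $2\times 2$ complex matrices, so $U_j$ is central in $M_2(\mathbb{C})$, i.e. a scalar. The determinant condition then gives $U_j = \pm \sigma_0$.

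The only point needing care is this kernel computation. In particular, one must observe that the tensor factorization $U_1 \otimes \cdots \otimes U_N$ of an element of $\mathrm{SU}(2)^{\otimes N}$ is not unique, which is exactly why we work through $\rho$ on $\mathrm{SU}(2)^N$ rather than directly on $\mathrm{SU}(2)^{\otimes N}$.

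Finally, we upgrade the abstract isomorphism to one of Lie groups. The induced map $\mathrm{SU}(2)^N/\ker\rho \to \mathcal{H}$ is a continuous bijective homomorphism. Its source is compact and its target is Hausdorff, so it is a homeomorphism, and hence an isomorphism of Lie groups.
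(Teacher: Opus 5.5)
Your proof is correct and follows the paper's route: both realize $\mathcal{H}$ as the image of $\mathrm{SU}(2)^N$ under $(U_1,\ldots,U_N)\mapsto \mathrm{Ad}_{U_1\otimes\cdots\otimes U_N}$, with kernel $\{\pm\sigma_0\}^N\cong\mathbb{Z}_2^N$. The paper finds this kernel by first computing the kernel of the tensor map and then using the asserted, unproved fact that $\mathrm{SU}(2)^{\otimes N}\cap Z(\mathrm{SU}(2^N))=\{\pm\mathrm{id}\}$; your direct test on weight-one elements justifies that step, and your compact-to-Hausdorff remark gives the Lie-group isomorphism that the paper leaves implicit.
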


        \noindent
        {\bf Proof}:
         First, we have a Lie group homomorphism
         $\rho: \mathrm{SU}(2)^{N}  \longrightarrow \mathrm{SU}(2)^{\otimes N}$ defined by
        \[
        \begin{aligned}
            \rho(U_1,\cdots,U_N) & = U_1 \otimes \cdots \otimes U_N.
        \end{aligned}
        \]
        The kernel of this map is given by
        $\ker(\rho) = \left\{ (a_1 \sigma_0, a_2 \sigma_0, \cdots, a_N \sigma_0) \left| a_i = \pm 1, \prod_{i = 1}^N a_i = 1 \right\} \right.$.

        Note that the kernel of the homomorphism
        $\mathrm{Ad}: \mathrm{SU}(2^N) \longrightarrow {\rm Inn}(\mathfrak{su}(2^N))$
        is $Z(\mathrm{SU}(2^N))$, which is the center of $\mathrm{SU}(2^N)$.
        Since $\mathrm{SU}(2)^{\otimes N} \cap Z(\mathrm{SU}(2^N)) = \{\pm \mathrm{id}\}$,
        we have
        \[ \ker(\mathrm{Ad} \circ \rho) = \{(a_1\sigma_0, a_2\sigma_0, \cdots, a_N\sigma_0) \mid a_i = \pm 1 \}
           \cong \mathbb{Z}_2^N. \]
        Consequently, $\overline{\mathrm{Ad}}(\mathrm{SU}(2)^{\otimes N})= {\rm Im}(\mathrm{Ad} \circ \rho)
           \cong (\mathrm{SU}(2)/\mathbb{Z}_2)^{N} \cong \mathrm{SO}(3)^N$.
         $\Box$

        A nontrivial group $G$ is called a {\it simple group} if it has no normal subgroups other than $\{e\}$ and $G$
        (see \cite[Page 16]{robinson1996}).
        Note that a Lie group is simple if its Lie algebra is simple (see \cite[Page 131]{Helgason}). So a simple Lie group may not be a simple group. For example, ${\rm SU}(2)$ is a simple Lie group but it is not a simple group
        since it has a non-trivial center. The following result can be found in \cite[3.3.16]{robinson1996}:

        \begin{lemma} \label{lem:NSProd}
        Let $G_1, \ldots, G_n$ be non-abelian simple groups. Then any normal subgroup of $G_1 \times \cdots \times G_n$
        must be of the form
        $\{e\} \times \cdots \times G_{i_1} \times \cdots \times G_{i_k} \times \cdots \times \{e\}$ for some
        $1 \leq i_1 < \cdots < i_k \leq n$.
        \end{lemma}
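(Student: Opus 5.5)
The plan is the standard argument by induction on $n$, using only the definition of simplicity and the fact that each $G_i$ is non-abelian. Let $G = G_1 \times \cdots \times G_n$, and identify each $G_i$ with the subgroup $\{e\}\times\cdots\times G_i\times\cdots\times\{e\}$. Let $M$ be a normal subgroup of $G$ and put $S := \{ i \mid M \cap G_i \neq \{e\}\}$. The goal is to show that
\[ M = \prod_{i \in S} G_i \]
(with the convention that $M=\{e\}$ when $S=\emptyset$).

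The first step is to show that $G_i \subseteq M$ for every $i \in S$. The intersection $M \cap G_i$ is normal in $G_i$, because conjugating by elements of $G_i$ preserves both $M$ and $G_i$. Since $G_i$ is simple and $M\cap G_i$ is nontrivial, we get $M \cap G_i = G_i$. This gives the inclusion $\prod_{i\in S} G_i \subseteq M$.

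The second step, which is the main point, is the reverse inclusion. Take $m = (m_1,\ldots,m_n) \in M$ and suppose $m_j \neq e$ for some $j \notin S$. Because the center of a non-abelian simple group is a proper normal subgroup, it is trivial. Hence there is $x \in G_j$ with $x m_j x^{-1} \neq m_j$. Viewing $x$ as an element of $G$, the commutator $x m x^{-1} m^{-1}$ lies in $M$, since $M$ is normal. Its components agree with the identity in every coordinate except the $j$-th, where the entry is $x m_j x^{-1} m_j^{-1} \neq e$. So this commutator is a nontrivial element of $M \cap G_j$, contradicting $j\notin S$. Therefore every $m\in M$ has $m_j = e$ for all $j\notin S$, that is, $M \subseteq \prod_{i\in S} G_i$.

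Combining the two steps gives the required form, with $\{i_1<\cdots<i_k\} = S$. The only delicate point is the use of non-abelianness, to guarantee trivial centers; everything else is formal. In the application to $\mathcal{H}\cong \mathrm{SO}(3)^N$, each $\mathrm{SO}(3)$ is indeed a non-abelian simple group.
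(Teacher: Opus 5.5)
Your proof is correct and complete. The paper gives no proof of this lemma; it quotes the result from Robinson's book (item 3.3.16).

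Your argument is the standard one for this fact. Simplicity forces $M\cap G_i\in\{\{e\},G_i\}$. Then the commutator $xmx^{-1}m^{-1}$, with $x\in G_j$, turns any nontrivial $j$-th coordinate of an element of $M$ into a nontrivial element of $M\cap G_j$.

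What your version adds over the citation is that it is self-contained. It uses only the definitions and holds for arbitrary, in particular infinite, groups such as $\mathrm{SO}(3)$, which is the case the paper needs. It also shows exactly where non-abelianness enters, namely the triviality of $Z(G_j)$. Without that hypothesis the lemma fails: the diagonal subgroup of $\mathbb{Z}_2\times\mathbb{Z}_2$ is normal but is not a product of factors.

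One small presentational point: you announce an induction on $n$, but the argument never uses one. It is a direct proof, so you can drop that sentence.
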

        Using this lemma, we can prove the following:

        \begin{lemma}\label{simple_group_product}
        Let $G$ be a non-abelian simple group and $G^n$ the product of $n$ copies of $G$.
        Then $\mathrm{Aut}(G^n) \cong (\mathrm{Aut}(G))^n \rtimes S_n$.
        \end{lemma}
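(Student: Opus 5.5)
We construct explicit homomorphisms in both directions. First define $\Lambda: (\mathrm{Aut}(G))^n \rtimes S_n \to \mathrm{Aut}(G^n)$. An element $(\alpha_1,\ldots,\alpha_n)$ acts coordinatewise, $(x_1,\ldots,x_n)\mapsto(\alpha_1(x_1),\ldots,\alpha_n(x_n))$. A permutation $\tau$ acts by $(x_1,\ldots,x_n)\mapsto(x_{\tau^{-1}(1)},\ldots,x_{\tau^{-1}(n)})$. The semidirect product structure is the one in which $S_n$ permutes the factors of $(\mathrm{Aut}(G))^n$. A routine check shows that $\tau\circ(\alpha_i)_i\circ\tau^{-1}=(\alpha_{\tau^{-1}(i)})_i$, so $\Lambda$ is a homomorphism.

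Injectivity of $\Lambda$ is also direct. Suppose $(\alpha_i)\tau$ acts trivially. The subgroups $G_i:=\{e\}\times\cdots\times G\times\cdots\times\{e\}$ are nontrivial, and this automorphism sends $G_i$ to $G_{\tau(i)}$. Hence $\tau=\mathrm{id}$, and then each $\alpha_i=\mathrm{id}$.

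The main step is surjectivity. Let $\varphi\in\mathrm{Aut}(G^n)$. Each $G_i$ is a minimal normal subgroup of $G^n$: it is normal, and any nontrivial normal subgroup of $G^n$ contained in $G_i$ is normal in $G_i\cong G$, hence equals $G_i$. Since $\varphi$ preserves normality and inclusion, $\varphi(G_i)$ is again a minimal normal subgroup. By Lemma~\ref{lem:NSProd}, every normal subgroup has the form $G_{i_1}\times\cdots\times G_{i_k}$, so the minimal ones are exactly $G_1,\ldots,G_n$. Therefore $\varphi(G_i)=G_{\tau(i)}$ for a unique index $\tau(i)$. Because $\varphi$ is bijective and the $G_i$ are distinct, $\tau$ is a permutation.

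Set $\psi:=\Lambda(\tau)^{-1}\circ\varphi$, so that $\psi(G_i)=G_i$ for every $i$. Then $\psi|_{G_i}$ defines $\alpha_i\in\mathrm{Aut}(G)$. Since $G^n$ is generated by the $G_i$ (every element is the product of its coordinate elements), $\psi$ agrees with $(\alpha_1,\ldots,\alpha_n)$ on all of $G^n$. Hence $\varphi=\Lambda(\tau)\Lambda((\alpha_i))$ lies in the image of $\Lambda$.

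The only delicate point is the identification of the minimal normal subgroups, which is where Lemma~\ref{lem:NSProd}, and with it the non-abelian hypothesis, is used. Note that non-abelianness is essential: for abelian $G$, diagonal-type subgroups can be normal and the conclusion fails. We will later apply this with $G=\mathrm{SO}(3)$, which is a non-abelian simple group.
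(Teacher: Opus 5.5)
Your proposal is correct and follows essentially the same route as the paper: use Lemma~\ref{lem:NSProd} to identify $G_1,\ldots,G_n$ as the only minimal normal subgroups, observe that every automorphism permutes them, compose with a permutation so that each $G_i$ is preserved, and read off the coordinate automorphisms. You also spell out the homomorphism $\Lambda$, check that it respects the semidirect product law, and verify its injectivity, all of which the paper leaves implicit.
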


        \noindent
        {\bf Proof}:
        Let $G_i := \{e\} \times \cdots \times G \times \cdots \times \{e\}$ where $G$ occurs at the i-th position.
        By Lemma \ref{lem:NSProd}, $G_1, \ldots, G_n$ are the only minimal normal subgroups of $G^n$, i.e they do not contain other non-trivial normal subgroups.
        For any $\varphi \in \mathrm{Aut}(G^n)$, since $\varphi$ maps minimal normal subgroups to minimal normal subgroups, it permutes the set $\{G_1, \ldots, G_n\}$.
        Note that the group $S_n$ acts as automorphisms on $G^n$ by permuting factors in the product.
        After composing with the action by an element in $S_n$, we can assume that $\varphi(G_i) = G_i$
        for all $i=1, \ldots, n$. Let $\varphi_i$ be the restriction of $\varphi$ to $G_i$. Then
        $\varphi_i$ can be considered as an element in ${\rm Aut}(G)$ since $G_i \cong G$.
        It follows that $\varphi = \varphi_1 \times \cdots \times \varphi_n \in (\mathrm{Aut}(G))^n$.
        This completes the proof of the lemma.
        $\Box$

         The automorphism group of $\mathrm{SO}(3)$ might be well known. We include a proof here for completeness.

        \begin{lemma}\label{SO_3_property}
        $\mathrm{Aut}(\mathrm{SO}(3)) =\mathrm{Inn}(\mathrm{SO}(3)) \cong \mathrm{SO}(3)$.
        \end{lemma}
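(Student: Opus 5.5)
We prove $\mathrm{Aut}(\mathrm{SO}(3)) = \mathrm{Inn}(\mathrm{SO}(3))$ by passing to the Lie algebra. Let $\varphi \in \mathrm{Aut}(\mathrm{SO}(3))$; we treat $\varphi$ as an automorphism of the Lie group. This is automatic if $\varphi$ is assumed continuous. For an abstract group automorphism, continuity follows because $\mathrm{SO}(3)$ is compact and semisimple; we will cite this standard fact if the paper intends abstract automorphisms, but the paper works with Lie group automorphisms throughout. Its differential $d\varphi := \varphi_{*e}$ is a Lie algebra automorphism of $\mathfrak{so}(3)$. Since $\mathrm{SO}(3)$ is connected, it is generated by $\exp(\mathfrak{so}(3))$, so $\varphi$ is determined by $d\varphi$ via $\varphi(\exp X) = \exp(d\varphi(X))$. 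It therefore suffices to show that every automorphism of $\mathfrak{so}(3)$ equals $\mathrm{Ad}_g$ for some $g \in \mathrm{SO}(3)$. Then $\varphi$ and $\mathcal{A}\mathrm{d}_g$ have the same differential and hence coincide.

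For the Lie algebra step, identify $\mathfrak{so}(3)$ with $(\mathbb{R}^3,\times)$ via the standard map $v \mapsto \hat v$, where $\hat v w = v\times w$. Under this identification $\mathrm{Ad}_g$ corresponds to $g$ acting on $\mathbb{R}^3$, because $\widehat{gv} = g\hat v g^{-1}$ for $g\in\mathrm{SO}(3)$. Let $A \in GL(3,\mathbb{R})$ be a Lie algebra automorphism. The Killing form $B(X,Y)=\mathrm{tr}(\mathrm{ad}_X\mathrm{ad}_Y)$ is preserved by every automorphism, and on $\mathfrak{so}(3)$ it equals $-2$ times the Euclidean inner product. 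Hence $A \in \mathrm{O}(3)$. For orthogonal $A$ we have the identity $A(v\times w) = \det(A)\,(Av\times Aw)$. Combining this with $A(v\times w)=Av\times Aw$ and choosing $v,w$ with $v\times w\ne 0$ forces $\det A = 1$. So $A\in\mathrm{SO}(3)$, and $A$ corresponds to $\mathrm{Ad}_A$, which completes this step.

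It remains to show $\mathrm{Inn}(\mathrm{SO}(3))\cong\mathrm{SO}(3)$. The map $g\mapsto\mathcal{A}\mathrm{d}_g$ is a surjective homomorphism onto $\mathrm{Inn}(\mathrm{SO}(3))$, and its kernel is the center of $\mathrm{SO}(3)$. The center is trivial: a central element commutes with every rotation, hence preserves every axis, so it is $\pm I$, and only $I$ has determinant $1$. This gives the isomorphism.

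The main obstacle is the determinant argument, i.e.\ ruling out $A\in\mathrm{O}(3)\setminus\mathrm{SO}(3)$. The cleanest route is the cross product identity above, which we would justify via $\langle Av\times Aw, Au\rangle = \det(A)\langle v\times w,u\rangle$. If the automorphisms in the statement are meant abstractly rather than continuously, the continuity question is a second subtle point, and there we would simply cite the standard automatic continuity result.
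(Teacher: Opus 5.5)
Your proof is correct. It uses the same reduction as the paper: since $\mathrm{SO}(3)$ is connected, the map $\varphi\mapsto\varphi_{*e}$ from $\mathrm{Aut}(\mathrm{SO}(3))$ to $\mathrm{Aut}(\mathfrak{so}(3))$ is injective. Where you differ is in how $\mathrm{Aut}(\mathfrak{so}(3))$ is computed.

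The paper does not compute it directly. It imports $\mathrm{Aut}(\mathrm{SU}(2))\cong\mathrm{SO}(3)$ from equation \eqref{eqn:AutSU(n)}, passes to $\mathrm{Aut}(\mathfrak{su}(2))$ using simple connectivity of $\mathrm{SU}(2)$, and then uses $\mathfrak{su}(2)\cong\mathfrak{so}(3)$. You derive it from scratch. Invariance of the Killing form puts every automorphism in $\mathrm{O}(3)$, and the identity $A(v\times w)=\det(A)\,(Av\times Aw)$ rules out $\det A=-1$. So every automorphism is $\mathrm{Ad}_A$ with $A\in\mathrm{SO}(3)$.

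The paper's version is shorter, given the cited classification of $\mathrm{Aut}(\mathrm{SU}(n))$. Yours is self-contained. More importantly, it proves directly that $\mathrm{Ad}\colon\mathrm{SO}(3)\to\mathrm{Aut}(\mathfrak{so}(3))$ is onto, which is exactly what is needed to conclude $\mathrm{Aut}=\mathrm{Inn}$. In the paper this surjectivity is left implicit behind the chain of abstract isomorphisms $\mathrm{Aut}(\mathfrak{so}(3))\cong\mathrm{SO}(3)\cong\mathrm{Inn}(\mathrm{SO}(3))$. An injection into an abstractly isomorphic infinite group need not be onto, so the paper tacitly needs a further step: inner automorphisms of $\mathrm{SU}(2)$ descend to $\mathrm{Ad}$ on $\mathrm{SO}(3)$, or a dimension-and-connectedness argument.

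You also verify triviality of the center explicitly, which the paper takes for granted here. Your remark on abstract versus continuous automorphisms addresses a point the paper passes over silently, even though Lemma \ref{simple_group_product} is a statement about abstract groups. Citing automatic continuity (van der Waerden) settles that point.
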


        \noindent{\bf Proof}:
        Since $\mathrm{SU}(2)$ is simply connected, $\mathrm{Aut}(\mathfrak{su}(2))
        \cong \mathrm{Aut}(\mathrm{SU}(2)) \cong \mathrm{SO}(3)$
        by equation \eqref{eqn:AutSU(n)}.
        Since $\mathfrak{so}(3) \cong \mathfrak{su}(2)$, we have
        $\mathrm{Aut}(\mathfrak{so}(3)) \cong \mathrm{SO}(3) \cong \mathrm{Inn}(\mathrm{SO}(3)).$
        Since the map
        $\mathrm{Aut}(\mathrm{SO}(3)) \rightarrow \mathrm{Aut}(\mathfrak{so}(3))$ is injective,
        we have $\mathrm{Aut}(\mathrm{SO}(3)) = \mathrm{Inn}(\mathrm{SO}(3))  \cong \mathrm{SO}(3)$. $\Box$

        Since $\mathrm{SO}(3)$ is a non-abelian simple group (see, for instance, Chapter 2, Section 3 of \cite{stillwell2008naive}), Lemma \ref{simple_group_product} and Lemma \ref{SO_3_property} immediately imply the following:

        \begin{lemma} \label{lem:AutOutSO3N}
        $\mathrm{Aut}(\mathrm{SO}(3)^N) \cong \mathrm{SO}(3)^N \rtimes S_N$ and
         $\mathrm{Out}(\mathrm{SO}(3)^N) \cong S_N$.
        \end{lemma}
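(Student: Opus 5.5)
The plan is to combine the two preceding lemmas directly. First, $\mathrm{SO}(3)$ is a non-abelian simple group, so Lemma \ref{simple_group_product} applies with $G=\mathrm{SO}(3)$ and $n=N$. It gives
\[ \mathrm{Aut}(\mathrm{SO}(3)^N)\cong (\mathrm{Aut}(\mathrm{SO}(3)))^N\rtimes S_N . \]
Substituting $\mathrm{Aut}(\mathrm{SO}(3))\cong \mathrm{SO}(3)$ from Lemma \ref{SO_3_property} yields the first claimed isomorphism. To be careful about the semidirect product structure, I would note three facts explicitly. The factor permutations form a subgroup isomorphic to $S_N$. The coordinatewise automorphisms form a normal subgroup $\mathrm{Aut}(\mathrm{SO}(3))^N$, because conjugating a coordinatewise automorphism by a permutation only permutes its coordinates. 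Finally, these two subgroups intersect trivially, since a nontrivial permutation moves some factor $G_i$ while a coordinatewise automorphism fixes every $G_i$.

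For the outer automorphism group, I would identify $\mathrm{Inn}(\mathrm{SO}(3)^N)$ inside this decomposition. An inner automorphism by $(x_1,\dots,x_N)$ acts coordinatewise as $\mathcal{A}{\rm d}_{x_1}\times\cdots\times\mathcal{A}{\rm d}_{x_N}$. By Lemma \ref{SO_3_property}, every automorphism of $\mathrm{SO}(3)$ is inner, so
\[ \mathrm{Inn}(\mathrm{SO}(3)^N)=\mathrm{Inn}(\mathrm{SO}(3))^N=\mathrm{Aut}(\mathrm{SO}(3))^N , \]
which is exactly the normal factor. Quotienting the semidirect product by its normal factor then gives $\mathrm{Out}(\mathrm{SO}(3)^N)\cong S_N$.

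The only point needing care is the identification of the normal factor with the inner automorphisms. The proof of Lemma \ref{simple_group_product} establishes the decomposition as a set-theoretic product, so I would explicitly verify that the subgroup $(\mathrm{Aut}\,G)^N$ is normal and that the quotient map sends each element to its induced permutation of the set $\{G_1,\dots,G_N\}$. Once that is checked, the rest of the argument is routine.
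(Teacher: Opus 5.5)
Your proposal is correct and follows the paper's route: the paper observes that $\mathrm{SO}(3)$ is a non-abelian simple group and notes that Lemmas~\ref{simple_group_product} and~\ref{SO_3_property} then immediately give the result. Your explicit checks fill in details the paper leaves implicit: that $(\mathrm{Aut}\,\mathrm{SO}(3))^N$ is the normal kernel of the induced action on $\{G_1,\dots,G_N\}$, that it meets the permutation subgroup trivially, and that it coincides with $\mathrm{Inn}(\mathrm{SO}(3)^N)$.
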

    By Lemma \ref{lem:SU2SO3N}, this also gives $\mathrm{Aut}(\mathcal{H})$ and  $\mathrm{Out}(\mathcal{H})$.

\subsection{Computing centralizer ${\rm C}_\mathcal{G} (\mathcal{H})$}

        This subsection is devoted to the proof of the following
        \begin{lemma}\label{centralizer}
         ${\rm C}_\mathcal{G} (\mathcal{H}) = \Phi,$ where $\Phi$ is defined by equation \eqref{eqn:Phi}.
        \end{lemma}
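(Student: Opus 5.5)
The plan is to decompose $\mathfrak{su}(2^N)$ as a real representation of $\mathcal{H} \cong \mathrm{SO}(3)^N$ and then apply Schur's lemma. First, for $I=(i_1,\ldots,i_k)\in\mathcal{P}$, the subspace $V_I$ defined in \eqref{def_irr} is invariant under $\mathcal{H}$. Indeed, by the computation in the proof of Lemma \ref{1localproperty}, $\mathrm{Ad}_{U_1\otimes\cdots\otimes U_N}$ acts on $-i\sigma_{j_1}\otimes\cdots\otimes\sigma_{j_N}$ factorwise. It fixes the $\sigma_0$ factors and rotates $\mathrm{span}_{\mathbb{R}}\{\sigma_1,\sigma_2,\sigma_3\}$ in the nonzero slots by the standard $\mathrm{SO}(3)$ action. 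Hence $V_I \cong \mathbb{R}^3_{(i_1)}\otimes\cdots\otimes\mathbb{R}^3_{(i_k)}$, where the $t$-th $\mathrm{SO}(3)$ factor acts by the standard representation on the slot indexed by $t$ if $t \in I$, and acts trivially otherwise. Because $\mathfrak{su}(2^N)=\bigoplus_{I\in\mathcal P}V_I$, this gives the full decomposition.

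Next I will show that each $V_I$ is absolutely irreducible and that the $V_I$ are pairwise non-isomorphic. After complexifying, $V_I\otimes\mathbb{C}$ is an outer tensor product of irreducible representations $\mathbb{C}^3$ (spin 1) of distinct factors $\mathrm{SO}(3)$, together with trivial representations of the remaining factors. An outer tensor product of irreducible representations of a product of compact groups is irreducible, so $V_I$ is absolutely irreducible. To separate distinct $I\neq J$, note that some factor $t$ acts trivially on one of $V_I$, $V_J$ and nontrivially on the other. Therefore their restrictions to that factor, and so their characters, differ.

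Schur's lemma then does the rest. Any $x\in \mathrm{C}_\mathcal{G}(\mathcal{H})$ is an $\mathcal{H}$-equivariant linear map of $\mathfrak{su}(2^N)$. Since the isotypic components are exactly the $V_I$, the map $x$ preserves each $V_I$, and its restriction to $V_I$ is an equivariant endomorphism of an absolutely irreducible real representation. That restriction is therefore a real scalar $\lambda_I$. The metric $g$ restricted to $V_I$ is a positive multiple of $g_1$, because $V_I\subset W_k$ with $k=|I|$, and $x$ preserves $g$. This forces $\lambda_I=\pm1$, so $x=\phi_\epsilon$ with $\epsilon(I)=\lambda_I$. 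Conversely, each $\phi_\epsilon$ acts by scalars on the $\mathcal{H}$-invariant subspaces $V_I$, so it commutes with $\mathcal{H}$, and it lies in $\mathcal{G}$ by \eqref{eqn:SPhiInG}. This gives $\Phi\subseteq \mathrm{C}_\mathcal{G}(\mathcal{H})$, and the two inclusions prove the lemma.

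I expect the main obstacle to be stating the real-versus-complex Schur argument cleanly. Over $\mathbb{R}$ the commutant of an irreducible representation could a priori be $\mathbb{C}$ or $\mathbb{H}$, so absolute irreducibility has to be established explicitly. I would handle it by showing the complexification is irreducible through the outer tensor product, or more concretely by noting that the torus action, rotating each slot about the $\sigma_3$ axis, has a one-dimensional zero-weight space.
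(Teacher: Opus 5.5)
Your proposal is correct and follows essentially the same route as the paper. You decompose $\mathfrak{su}(2^N)=\bigoplus_{I\in\mathcal{P}}V_I$ into pairwise non-isomorphic, absolutely irreducible $\mathcal{H}$-modules built from trivial and standard $\mathrm{SO}(3)$ factors, then apply Schur's lemma to get $x|_{V_I}=\lambda_I\,\mathrm{id}$, use $g$-invariance to force $\lambda_I=\pm1$, and check the converse inclusion $\Phi\subseteq \mathrm{C}_\mathcal{G}(\mathcal{H})$, exactly as the paper does.
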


        To prove Lemma \ref{centralizer}, we first recall some basic results from representation theory.
        Let $G$ be a group and $V$ be a finite-dimensional real vector space. The complexification of a real representation $\rho: G \to \mathrm{GL}(V)$ is a complex representation
        $\rho_{\mathbb{C}}: G \to \mathrm{GL}(V \otimes_{\mathbb{R}}\mathbb{C})$
         defined by $\rho_{\mathbb{C}}(h)(v + iw) = \rho(h)(v) + i\rho(h)(w)$
         for all $h \in G$ and $v, w \in V$. If $\rho_{\mathbb{C}}$ is an irreducible representation, then $\rho$ is called an {\it absolutely irreducible representation} (also called an irreducible $\mathbb{R}$-representation of real type in \cite[Chapter II Section 6]{BtD}).
        Let \[ \mathrm{End}_G(V) :=\{\varphi \in \mathrm{End}(V) \mid  \varphi \circ  \rho(h)
                                        = \rho(h) \circ \varphi, \forall h \in G\}. \]
        If $\rho: G \to \mathrm{GL}(V)$ is an absolutely irreducible real representation, the following result can be proved using Schur's lemma (see \cite[Chapter II Theorem 6.7]{BtD}):
        \[
        \mathrm{End}_G(V) = \mathbb{R} \, \, \mathrm{id}_V.
        \]

        Given representations $\rho_1: G_1 \to \mathrm{GL}(V_1)$ and $\rho_2: G_2 \to \mathrm{GL}(V_2)$ of two groups $G_1$ and $G_2$, their tensor product $\rho_1 \otimes \rho_2: G_1 \times G_2 \to \mathrm{GL}(V_1 \otimes V_2)$ is defined by
        \[
         (\rho_1 \otimes \rho_2)(h_1,h_2)(v_1 \otimes v_2) := \rho_1(h_1)v_1 \otimes \rho_2(h_2)v_2
        \]
        for all $h_i \in G_i$, $v_i \in V_i$. The following fact is also well known
        (see \cite[Proposition 4.14 on page 82 and Exercise 3 on page 264]{BtD}):
        \begin{lemma} \label{tensor}
        If $\rho_1$ and $\rho_2$ are absolutely irreducible real representations of $G_1$ and $G_2$ respectively,
        then $\rho_1 \otimes \rho_2$ is also absolutely irreducible as a representation of $G_1 \times G_2$ .
       \end{lemma}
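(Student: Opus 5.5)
The plan is to reduce the statement to a claim about complex representations and then prove that claim with Burnside's theorem. Using Burnside avoids any appeal to complete reducibility, so the argument works for arbitrary groups $G_1, G_2$ and finite-dimensional $V_1, V_2$.

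\textbf{Step 1 (complexify).} I would first identify the complexification of $\rho_1\otimes\rho_2$. There is a natural $\mathbb{C}$-linear isomorphism
\[
(V_1\otimes_{\mathbb{R}} V_2)\otimes_{\mathbb{R}}\mathbb{C}\;\cong\;(V_1\otimes_{\mathbb{R}}\mathbb{C})\otimes_{\mathbb{C}}(V_2\otimes_{\mathbb{R}}\mathbb{C}),
\]
given by $v_1\otimes v_2\otimes z\mapsto (v_1\otimes z)\otimes(v_2\otimes 1)$. It is equivariant for $G_1\times G_2$, so $(\rho_1\otimes\rho_2)_{\mathbb{C}}\cong(\rho_1)_{\mathbb{C}}\otimes(\rho_2)_{\mathbb{C}}$. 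By hypothesis both factors $(\rho_i)_{\mathbb{C}}$ are irreducible complex representations on $U_i:=V_i\otimes_{\mathbb{R}}\mathbb{C}$. It therefore suffices to show that the external tensor product of irreducible finite-dimensional complex representations is irreducible for $G_1\times G_2$.

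\textbf{Step 2 (Burnside).} Since $\mathbb{C}$ is algebraically closed and $U_i$ is an irreducible finite-dimensional module, Burnside's theorem says that the linear span $A_i:=\mathrm{span}_{\mathbb{C}}\{(\rho_i)_{\mathbb{C}}(h)\mid h\in G_i\}$ is all of $\mathrm{End}_{\mathbb{C}}(U_i)$. The span of the operators $(\rho_1)_{\mathbb{C}}(h_1)\otimes(\rho_2)_{\mathbb{C}}(h_2)$ then contains every product $a_1\otimes a_2$ with $a_i \in A_i$, by bilinearity. Hence it equals
\[
\mathrm{End}(U_1)\otimes\mathrm{End}(U_2)=\mathrm{End}(U_1\otimes U_2).
\]
A subspace of $U_1\otimes U_2$ that is invariant under $G_1\times G_2$ is invariant under this span, and hence under all endomorphisms. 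So the only invariant subspaces are $0$ and $U_1\otimes U_2$, which is irreducibility.

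\textbf{Main obstacle.} The only nontrivial input is Burnside's theorem. I would cite it, for instance via the Jacobson density theorem or the reference \cite{BtD}. The isomorphism in Step 1 needs only routine checking on pure tensors.

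As an alternative for compact groups, which is the case needed in this paper, complete reducibility holds. One can then argue via Schur's lemma instead. An endomorphism $T$ of $U_1\otimes U_2$ commuting with $G_1\times\{e\}$ lies in $\mathrm{End}_{G_1}(U_1)\otimes\mathrm{End}(U_2)=\mathrm{id}\otimes\mathrm{End}(U_2)$. If $T$ also commutes with $\{e\}\times G_2$, Schur's lemma applied to $U_2$ makes $T$ scalar. A commutant equal to the scalars forces irreducibility, because a proper invariant subspace would have an invariant complement and the corresponding projection would be a non-scalar element of the commutant.
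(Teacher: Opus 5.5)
Your proof is correct, but it takes a different route from the one the paper relies on. The paper gives no argument of its own. It cites Br\"ocker--tom Dieck, where the complex statement is proved for compact groups using characters: the character of $U_1\otimes U_2$ is $(h_1,h_2)\mapsto\chi_{U_1}(h_1)\chi_{U_2}(h_2)$, and Haar measure on $G_1\times G_2$ is the product measure, so this character has $L^2$-norm $1$, which characterizes irreducibility. The real case is handled by an exercise there.

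Your Step 1 makes the real-to-complex reduction explicit, and your Step 2 replaces the character computation with Burnside's density theorem. This gains generality and makes the proof self-contained. No compactness, continuity, Haar measure or complete reducibility is needed, so the lemma holds for arbitrary groups and finite-dimensional representations. The character route is shorter once the orthogonality relations are available. It also gives the converse (every irreducible representation of a product of compact groups is an external tensor product), which the paper does not need.

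Your Schur-lemma alternative is also fine. Identifying the commutant first as $\mathrm{id}\otimes\mathrm{End}(U_2)$ and then as the scalars is purely algebraic. Complete reducibility, and hence compactness and continuity (as for $\mathrm{SO}(3)^N$ acting on $V_I$), enters only in the final step from ``commutant $=\mathbb{C}$'' to irreducibility.
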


        Let $\rho_0$  be the trivial representation of $\mathrm{SO}(3)$ on
        ${\rm Span}_{\mathbb{R}} \{  \sigma_0 \}$, and $\rho_1$ the representation of $\mathrm{SO}(3)$ on 
        ${\rm Span}_{\mathbb{R}} \{ \sigma_1,  \sigma_2,  \sigma_3 \}$ 
        induced by the adjoint action on 
        $\mathfrak{so}(3) \cong \mathfrak{su}(2) = {\rm Span}_{\mathbb{R}} \{ i \sigma_1, i \sigma_2, i \sigma_3 \}$.
         Note that $\rho_1$ is isomorphic to the standard representation of $\mathrm{SO}(3)$
       on $\mathbb{R}^3$ induced by matrix product 
        (see, for example, \cite[Page 75]{woit2017quantum}).
       In this section, we will consider a class of absolutely irreducible representations of $\mathrm{SO}(3)^N$
       which can be defined as tensor products of $\rho_0$ and $\rho_1$.

        \begin{lemma} \label{lem:V-I}
        For any $I=(i_1, \ldots, i_k) \in \mathcal{P}$, let $V_I$ be the real vector space defined by equation \eqref{def_irr}.  Let $\rho^I$  be the representation of $\mathrm{SO}(3)^N$ on $V_I$ defined by
        \[
        \rho^I:= \rho_1 \otimes \cdots \otimes \rho_N,
        \]
        \noindent where $\rho_j = \rho_1$ if $j \in \{i_1, \cdots, i_k\}$ and $\rho_j=\rho_0$ otherwise.
        Then $\rho^I$ is absolutely irreducible for all $I \in \mathcal{P}$. Moreover, $\rho^I$ is not isomorphic to $\rho^J$ for all $I, J \in \mathcal{P}$ with $I \neq J$.
        \end{lemma}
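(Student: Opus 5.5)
We first identify $V_I$ with a tensor product, then combine the earlier irreducibility facts with a simple character computation. For $I=(i_1,\ldots,i_k)\in\mathcal{P}$, the basis vectors $-i\sigma_{j_1}\otimes\cdots\otimes\sigma_{j_N}$ of $V_I$ have $j_t\in\{1,2,3\}$ exactly for $t\in\{i_1,\ldots,i_k\}$ and $j_t=0$ otherwise. Therefore the linear map
\[
V_{t_1}\otimes\cdots\otimes V_{t_N}\to V_I,\qquad v_1\otimes\cdots\otimes v_N\mapsto -i\,v_1\otimes\cdots\otimes v_N,
\]
is an isomorphism, where $V_{t}={\rm Span}_{\mathbb{R}}\{\sigma_1,\sigma_2,\sigma_3\}$ for $t\in I$ and $V_t={\rm Span}_{\mathbb{R}}\{\sigma_0\}$ otherwise. (The factor $-i$ is only an overall scalar.) Via Lemma \ref{lem:SU2SO3N}, an element of $\mathcal{H}\cong\mathrm{SO}(3)^N$ is $\mathrm{Ad}_{U_1\otimes\cdots\otimes U_N}$, and it acts factorwise by $U_t\sigma_{j}U_t^{-1}$, as in the proof of Lemma \ref{1localproperty}. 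This identifies the action of $\mathrm{SO}(3)^N$ on $V_I$ with $\rho^I=\rho_1\otimes\cdots\otimes\rho_N$, with factors chosen as in the statement. In particular, $\rho^I$ is well defined and $V_I$ is $\mathcal{H}$-invariant.

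For absolute irreducibility, note that $\rho_0$ is one dimensional, so it is trivially absolutely irreducible. The representation $\rho_1$ is the standard representation of $\mathrm{SO}(3)$ on $\mathbb{R}^3$. Its complexification is the standard representation on $\mathbb{C}^3$, which is irreducible: it is the spin-one representation of $\mathfrak{su}(2)$, equivalently an $\mathrm{SO}(3)$-invariant complex line in $\mathbb{C}^3$ would be an isotropic or real line fixed by all rotations, and no such line exists. One could also argue that $\mathrm{End}_{\mathrm{SO}(3)}(\mathbb{R}^3)=\mathbb{R}$, because a commuting matrix commutes with all rotations. Applying Lemma \ref{tensor} inductively on the number of factors, with $G_1=\mathrm{SO}(3)^{m}$ and $G_2=\mathrm{SO}(3)$, shows that $\rho^I$ is absolutely irreducible as a representation of $\mathrm{SO}(3)^N$.

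For non-isomorphism with $I\neq J$, choose an index $t$ lying in exactly one of $I,J$, say $t\in I\setminus J$. Restrict both representations to the $t$-th factor $\mathrm{SO}(3)\subset\mathrm{SO}(3)^N$. Under $\rho^J$ this factor acts trivially, since the $t$-th tensor factor is $\rho_0$ and the others see the identity. Under $\rho^I$ it acts as $\rho_1\otimes\mathrm{id}$, i.e. a direct sum of copies of the standard representation, which is nontrivial. An isomorphism of representations would intertwine these restrictions, so $\rho^I$ is not isomorphic to $\rho^J$. A quicker alternative is to compare characters at a rotation in the $t$-th slot: they are $\dim$-scaled values $1+2\cos\theta$ versus $1$.

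The main obstacle is the identification of $V_I$ with the tensor product representation, in particular checking that the $\mathrm{Ad}$-action really factors through the tensor factors. This is essentially already contained in Lemma \ref{1localproperty}, so only careful bookkeeping is needed. The remaining steps are standard.
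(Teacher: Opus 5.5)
Your proposal is correct and follows essentially the same route as the paper: absolute irreducibility of $\rho^I$ comes from that of $\rho_0$ and $\rho_1$ via Lemma \ref{tensor}, and non-isomorphism comes from an element of $\mathrm{SO}(3)^N$ that is nontrivial only in a single slot $t$, which acts trivially on one of $V_I, V_J$ but not on the other. Choosing $t$ in the symmetric difference of the index sets is slightly cleaner than the paper's wording, and your explicit identification of the $\mathcal{H}$-action on $V_I$ with the tensor product representation, which the paper only asserts, is a useful addition.
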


        \noindent
        {\bf Proof}:
        It is well known that $\rho_0$ and $\rho_1$ are absolutely irreducible (see, for example, \cite[Page 273]{BtD}).
         By Lemma \ref{tensor},  $\rho^I$ is absolutely irreducible for all $I \in \mathcal{P}$.
         Note that for all $U \in \mathrm{SU}(2)^{\otimes N}$, the action of
         $\rho^I(\mathrm{Ad}_U)$  is just the restriction of $\mathrm{Ad}_U$ on $V_I$
         when we consider $\mathrm{Ad}_U$ as an element in  $\mathrm{SO}(3)^N$  by Lemma \ref{lem:SU2SO3N}. 

        Assume $I=(i_1, \ldots, i_k), J=(j_1, \ldots, j_l) \in \mathcal{P}$ and $I \neq J$.
        At least one of the sets of indices, say $\{i_1, \ldots, i_k\}$, is not equal to $\{1, \ldots, N\}$.
        We can choose
        $t \in \{j_1, \ldots, j_l\} \setminus \{i_1, \ldots, i_k\}$ and
        $h=(h_1, \ldots, h_N) \in \mathrm{SO}(3)^N$ with  $h_s=e$ for $s \neq t$ and $h_t \neq e$, where $e$
        is the identity element of $\mathrm{SO}(3)$.
        Then $\rho^I (h)= {\rm id}_{V_I}$ and $\rho^J (h) \neq {\rm id}_{V_J}$.
        Hence $\rho^I$ is not isomorphic to $\rho^J$.        
        $\Box$

        Now we are ready to prove Lemma \ref{centralizer}.

        \noindent
        {\bf Proof of Lemma \ref{centralizer}}:

        Recall $\mathcal{G} = \mathrm{O}(\mathfrak{su}(2^N), g)$ and 
        $\mathcal{G} \supset \mathcal{H} = \overline{\mathrm{Ad}}(\mathrm{SU}(2)^{\otimes N}) \cong \mathrm{SO}(3)^N $.
        We can consider $\mathcal{G}$ as a subset of $\mathrm{End}(\mathfrak{su}(2^N))$. Then $\mathfrak{su}(2^N)$ is an $\mathcal{H}$-module which admits the following irreducible decomposition:
        \begin{equation} \label{eqn:sumVI}
        \mathfrak{su}(2^N) = \bigoplus_{I \in \mathcal{P}} V_I,
        \end{equation}
        where $\mathcal{P}$ and $V_I$ are defined by equations \eqref{eqn:CalP} and \eqref{def_irr}. The action of
        $\mathcal{H}$ on $V_I$ is given by $\rho^I$ defined in Lemma \ref{lem:V-I}.
        Moreover
        \[ {\rm C}_\mathcal{G} (\mathcal{H}) = \mathrm{End}_{\mathcal{H}} (\mathfrak{su}(2^N)) \cap \mathcal{G}.\]

        Since each $V_I$ is an absolutely irreducible  $\mathcal{H}$-module and $V_I$ is not isomorphic to $V_J$ if $I \neq J$,
        by Schur's lemma for absolutely irreducible representations, we have
        \[
        \begin{aligned}
        \mathrm{End}_{\mathcal{H}} (\mathfrak{su}(2^N)) 
            &=\bigoplus_{I \in \mathcal{P}} \mathrm{End}_{\mathcal{H}} (V_I)
            = \bigoplus_{I \in \mathcal{P}} \mathbb{R} \, \mathrm{id}|_{V_I}.
        \end{aligned}
        \]
        Let $\pi_I$ be the projection from $\mathfrak{su}(2^N)$ onto $V_I$
        with respect to the decomposition \eqref{eqn:sumVI}.
       Then for any $\varphi \in \mathrm{End}_{\mathcal{H}} (\mathfrak{su}(2^N))$, there exist
       $\lambda_I \in \mathbb{R}$ for $I \in \mathcal{P}$ such that
        \begin{equation} \label{eqn:Endo-H}
        \varphi(X) = \sum_{I \in \mathcal{P}} \lambda_I \, \pi_I(X)
        \end{equation}
        for all $X \in \mathfrak{su}(2^N)$.
        If $\varphi \in {\rm C}_\mathcal{G} (\mathcal{H})$, then $\varphi$ must preserve the metric $g$.
        For every $I \in \mathcal{P}$, choose
        $0 \neq X_I \in V_I$,  we have
        $
        g(X_I,X_I) = g(\varphi(X_I), \varphi(X_I)) = \lambda_I^2 \, g(X_I,X_I).
        $
       Hence $\lambda_I = \pm 1$ for all $I$ and $\varphi \in \Phi$, where $\Phi$ is defined by equation \eqref{eqn:Phi}.
        
        Conversely, if $\lambda_I = \pm 1$ for all $I \in \mathcal{P}$, it is straightforward to check the map $\varphi$ defined by equation \eqref{eqn:Endo-H} belongs to ${\rm C}_\mathcal{G} (\mathcal{H})$. Hence we have ${\rm C}_\mathcal{G} (\mathcal{H})=\Phi$.
         $\Box$

\subsection{Computing Normalizer ${\rm N}_\mathcal{G} (\mathcal{H})$ }

       In Lemma \ref{lem:AutOutSO3N}, we have shown $\mathrm{Out}(\mathcal{H}) \cong S_N$. We first see how to realize this group using the adjoint action of certain elements in $\mathrm{SU}(2^N)$.
      
        For any $1 \leq k \neq l \leq N$, define $U_{kl} \in \mathrm{SU}(2^N)$ by
        \begin{equation}\label{SWAP}
        U_{kl} := \exp\left( \frac{\pi i}{4} \sum_{\alpha=1}^3 \sigma_\alpha^{(k)} \sigma_\alpha^{(l)} \right),
        \end{equation}
        \noindent where $\sigma_\alpha^{(i)}$ is given by
        \begin{equation}\label{single_pauli_definition}
        \sigma_\alpha^{(i)} := \sigma_0 \otimes \cdots \otimes \sigma_0 \otimes \underbrace{\sigma_\alpha}_{i\text{-th}} \otimes \sigma_0 \otimes \cdots \otimes \sigma_0, \quad \alpha \in \{0, 1, 2, 3\}.
        \end{equation}

    \begin{lemma} \label{lem:UijSN}
     For any $I=(i_1, \ldots, i_N) \in \mathcal{I}_N^0$, 
    \begin{equation}\label{eq:SWAP}
    \mathrm{Ad}_{U_{kl}}(\tilde\sigma_I) = \tilde\sigma_{I'},
    \end{equation}
    where $I'$ is obtained from $I$ by switching $i_k$ and $i_l$.
    \end{lemma}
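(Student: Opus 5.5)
The plan is to identify $U_{kl}$ explicitly with the SWAP operator on the $k$-th and $l$-th tensor factors, up to a scalar phase. Once this is done, conjugation by $U_{kl}$ simply permutes those two factors, which is exactly equation \eqref{eq:SWAP}.

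\textbf{Step 1: reduce to two qubits.} Every $\sigma_\alpha^{(k)}\sigma_\alpha^{(l)}$ acts as the identity on the factors other than $k$ and $l$. After reordering the factors (a fixed unitary relabeling of $(\mathbb{C}^2)^{\otimes N}$ that commutes with everything in sight), it therefore suffices to treat $N=2$ with $k=1$, $l=2$. We may also assume $k<l$, since $U_{kl}=U_{lk}$ by definition.

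\textbf{Step 2: identify the exponent.} Let $S := \sum_{\alpha=1}^3 \sigma_\alpha\otimes\sigma_\alpha$. The standard identity
\[ \mathrm{SWAP}=\tfrac12\Bigl(\sigma_0\otimes\sigma_0+\sum_{\alpha=1}^3\sigma_\alpha\otimes\sigma_\alpha\Bigr) \]
can be checked on the basis $|00\rangle,|01\rangle,|10\rangle,|11\rangle$. Hence $S = 2\,\mathrm{SWAP}-\sigma_0\otimes\sigma_0$. Since $\mathrm{SWAP}^2=\mathrm{id}$, we get $S^2 = 4\,\mathrm{id}-4\,\mathrm{SWAP}+\mathrm{id} = 3\,\mathrm{id}-2S$. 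So $S$ is diagonalizable with eigenvalues $1$ (on the symmetric subspace, where $\mathrm{SWAP}=1$) and $-3$ (on the antisymmetric line).

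\textbf{Step 3: compute the exponential and the adjoint action.} By Step 2,
\[ \exp\bigl(\tfrac{\pi i}{4}S\bigr)= e^{\pi i/4}P_+ + e^{-3\pi i/4}P_- = e^{\pi i/4}(P_+-P_-) = e^{\pi i/4}\,\mathrm{SWAP}, \]
where $P_\pm$ are the spectral projections. Thus $U_{kl}=e^{\pi i/4}\,\mathrm{SWAP}_{kl}$, and the scalar cancels in conjugation, so
\[ \mathrm{Ad}_{U_{kl}}(X)=\mathrm{SWAP}_{kl}\,X\,\mathrm{SWAP}_{kl}^{-1}. \]
For a product operator $A_1\otimes\cdots\otimes A_N$, evaluating on product vectors $v_1\otimes\cdots\otimes v_N$ (which span the space) shows that conjugation by $\mathrm{SWAP}_{kl}$ interchanges $A_k$ and $A_l$. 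Applying this to $\tilde\sigma_I=-i\sigma_{i_1}\otimes\cdots\otimes\sigma_{i_N}$ gives $\tilde\sigma_{I'}$.

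The only real computation is the spectral identity in Step 2, which is where errors are most likely. I would verify it directly on the four basis vectors: $S$ acts on $|00\rangle$ and $|11\rangle$ as $1$, and on $|01\rangle\pm|10\rangle$ as $1$ and $-3$ respectively. This also confirms the eigenvalue claims used in Step 3.
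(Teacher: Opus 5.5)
Your proof is correct, and it reaches the same closed form as the paper. The paper shows $U_{kl}=\frac{\sqrt{2}(1+i)}{4}(\mathrm{Id}+\Gamma_{kl})$ with $\Gamma_{kl}=\sum_{\alpha=1}^3\sigma_\alpha^{(k)}\sigma_\alpha^{(l)}$, and your identity $\mathrm{Id}+\Gamma_{kl}=2\,\mathrm{SWAP}_{kl}$ turns this into your $e^{\pi i/4}\mathrm{SWAP}_{kl}$. The routes to and from that formula differ, however.

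The paper gets the closed form by splitting the exponential into the three commuting factors $\exp(\frac{\pi i}{4}\sigma_\alpha^{(k)}\sigma_\alpha^{(l)})=\frac{1}{\sqrt2}(\mathrm{Id}+i\sigma_\alpha^{(k)}\sigma_\alpha^{(l)})$ and multiplying out. It never uses the SWAP interpretation, only remarking on the resemblance afterwards. Instead it writes $\tilde\sigma_I=-i\prod_j\sigma_{i_j}^{(j)}$ and reduces, by multiplicativity of $\mathrm{Ad}_{U_{kl}}$ and by symmetry in $k\leftrightarrow l$ and in $\alpha$, to the single check $\mathrm{Ad}_{U_{kl}}(\sigma_1^{(k)})=\sigma_1^{(l)}$. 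It verifies that check via $\Gamma_{kl}\sigma_1^{(k)}+\sigma_1^{(k)}\Gamma_{kl}=2\sigma_1^{(l)}$ and $\Gamma_{kl}\sigma_1^{(k)}\Gamma_{kl}=-\sigma_1^{(k)}+2\sigma_1^{(l)}$.

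Your key input is instead the decomposition $\mathrm{SWAP}=\frac12\sum_{\alpha=0}^3\sigma_\alpha\otimes\sigma_\alpha$, with the exponential computed by spectral calculus. What this buys is a conceptual conjugation step: one check on product vectors handles every $\tilde\sigma_I$ at once and explains why a permutation appears, with no symmetry reductions. The paper's route, by contrast, stays entirely inside Pauli algebra and needs no facts about the SWAP gate.

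One small correction: in your Step 1, a relabeling of tensor factors conjugates the operators rather than commuting with them. The reduction is unnecessary anyway, since $\Gamma_{kl}=2\,\mathrm{SWAP}_{kl}-\mathrm{Id}$ can be checked directly on product basis vectors of $(\mathbb{C}^2)^{\otimes N}$, the factors other than $k$ and $l$ being untouched.
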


    \noindent
    {\bf Proof}:
     Note that for any $I=(i_1, \ldots, i_N) \in \mathcal{I}_N^0$, 
     $\tilde\sigma_I = -i \prod_{j=1}^N \sigma_{i_j}^{(j)}$. So
    \[ 
    \mathrm{Ad}_{U_{kl}}(\tilde\sigma_I) = -i \prod_{j=1}^N \mathrm{Ad}_{U_{kl}}(\sigma_{i_j}^{(j)}),
    \]
    where 
    $\mathrm{Ad}_{U_{kl}} (\sigma_{r}^{(j)}) :=  U_{kl} \, \sigma_{r}^{(j)} \, U_{kl}^{-1}$.   
    Hence to prove this lemma, we only need to show
    \begin{equation}\label{eq:SWAP1}
    \mathrm{Ad}_{U_{kl}}(\sigma_{r}^{(k)}) = \sigma_r^{(l)}, \hspace{20pt}
    \mathrm{Ad}_{U_{kl}}(\sigma_{r}^{(l)}) = \sigma_{r}^{(k)}, \hspace{20pt}
    \mathrm{Ad}_{U_{kl}}(\sigma_r^{(t)}) = \sigma_r^{(t)}
    \end{equation}
    for all $r \in \{0, 1,2,3\}$ and  $t \in \{1, \ldots,  N\} \setminus \{k, l\}$. 
    This is trivial if $r=0$. 
    By definition, $U_{kl}$ commutes with $\sigma_r^{(t)}$ if $t \neq k, l$.
    So the third  equality in \eqref{eq:SWAP1} is also trivial. 
    Since $U_{kl}$ is symmetric with respect to $k$ and $l$, we only need to show the first equality
    in \eqref{eq:SWAP1}. Moreover,  the right hand side of equation \eqref{SWAP} is
    symmetric with respect to  $\alpha \in \{1, 2, 3\}$. So
    it suffices to prove  the first equality in \eqref{eq:SWAP1} for $r = 1$.

    Let 
    \[ {\rm Id} := \sigma_0 \otimes \cdots \otimes \sigma_0, \hspace{20pt} 
      \Gamma_{kl}:= \sum_{\alpha=1}^3 \sigma_\alpha^{(k)} \sigma_\alpha^{(l)}.  \]
    Notice that the operators $i\sigma_1^{(k)}\sigma_1^{(l)}$, $i\sigma_2^{(k)}\sigma_2^{(l)}$, and $i\sigma_3^{(k)}\sigma_3^{(l)}$ commute with each other,  and
         $ \left( \sigma_\alpha^{(k)} \sigma_\alpha^{(l)} \right)^{2m} = {\rm Id} \,\,$
        for all integers $m \geq 0$. So we have
    \[
    U_{kl}= \prod_{\alpha=1}^3 \exp\left(\frac{\pi i}{4}\sigma_\alpha^{(k)}\sigma_\alpha^{(l)}\right)
        = \prod_{\alpha=1}^3 \left(\cos(\pi/4){\rm Id} + i \sin(\pi/4) \sigma_\alpha^{(k)}\sigma_\alpha^{(l)}\right)
        = \frac{\sqrt{2}(1+i)}{4} \left({\rm Id} + \Gamma_{kl} \right).
    \]
    A formula for $U_{kl}^{-1}$ is obtained if we replace $i$ by $-i$ in the above formula.
    Since
    \[ \Gamma_{kl} \, \sigma_1^{(k)} + \sigma_1^{(k)} \, \Gamma_{kl}  = 2 \sigma_1^{(l)}, \hspace{20pt}
        \Gamma_{kl} \, \sigma_1^{(k)}  \, \Gamma_{kl} =  - \sigma_1^{(k)} + 2 \sigma_1^{(l)},
    \]
    we have
    \[
    \mathrm{Ad}_{U_{kl}}(\sigma_1^{(k)}) 
     = \frac{1}{4} ({\rm Id} + \Gamma_{kl}) \, \sigma_1^{(k)}  \, ({\rm Id} + \Gamma_{kl})= \sigma_1^{(l)}.
    \]
    This finishes the proof of the lemma.
    $\Box$      
    
    \begin{remark}
    The operator $\mathrm{Ad}_{U_{kl}}$ is similar to the swap operator acting on $(\mathbb{C}^2)^{\otimes N}$
    (see, for example, \cite{NC}). 
    \end{remark}

        We now compute the normalizer ${\rm N}_\mathcal{G}(\mathcal{H})$ where $\mathcal{G}$ and $\mathcal{H}$
        are defined by equation \eqref{eqn:CalGH}.
        \begin{lemma}\label{split}
        \[
        {\rm N}_\mathcal{G}(\mathcal{H}) \cong ({\rm C}_\mathcal{G}(\mathcal{H}) \cdot \mathcal{H} ) \rtimes S_N.
        \]
        \end{lemma}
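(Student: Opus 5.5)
We use the short exact sequence \eqref{shortexactsequence} and show two things: the map $\pi\circ\theta:{\rm N}_\mathcal{G}(\mathcal{H})\to \mathrm{Out}(\mathcal{H})\cong S_N$ is surjective, and it admits a section whose image is the subgroup $\overline{S}_N\subset\mathcal{G}$. Once both hold, the sequence splits with $\overline{S}_N\cong S_N$ as complement. This gives ${\rm N}_\mathcal{G}(\mathcal{H}) = ({\rm C}_\mathcal{G}(\mathcal{H})\cdot\mathcal{H})\rtimes \overline{S}_N$, which is the desired isomorphism.

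\textbf{Step 1: $\overline{S}_N$ normalizes $\mathcal{H}$.} By Lemma \ref{lem:UijSN}, each transposition $\psi_{(kl)}$ equals $\mathrm{Ad}_{U_{kl}}$. Since transpositions generate $S_N$, every $\psi_\tau$ is a product of such adjoint maps, so $\psi_\tau=\mathrm{Ad}_{U_\tau}$ for some $U_\tau\in\mathrm{SU}(2^N)$. It follows that
\[
\psi_\tau\,\mathrm{Ad}_{U_1\otimes\cdots\otimes U_N}\,\psi_\tau^{-1}=\mathrm{Ad}_{U_\tau (U_1\otimes\cdots\otimes U_N) U_\tau^{-1}}.
\]
By equation \eqref{eq:SWAP1}, conjugation by $U_{kl}$ swaps the $k$-th and $l$-th tensor factors of an element of $\mathrm{SU}(2)^{\otimes N}$. (One can check this directly on $U_j=\exp(t\, i\sigma_r)$, using that $U_{kl}$ conjugates $\sigma_r^{(k)}$ to $\sigma_r^{(l)}$.) Hence $U_\tau(U_1\otimes\cdots\otimes U_N)U_\tau^{-1}$ is again an elementary tensor with permuted factors. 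Therefore $\psi_\tau\mathcal{H}\psi_\tau^{-1}=\mathcal{H}$, and $\overline{S}_N\subset{\rm N}_\mathcal{G}(\mathcal{H})$. We already know $\overline{S}_N\subset\mathcal{G}$ from \eqref{eqn:SPhiInG}.

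\textbf{Step 2: $\pi\circ\theta$ restricted to $\overline{S}_N$ is an isomorphism onto $\mathrm{Out}(\mathcal{H})$.} Under the identification $\mathcal{H}\cong\mathrm{SO}(3)^N$ of Lemma \ref{lem:SU2SO3N}, Step 1 shows that $\theta(\psi_\tau)$ is the factor-permuting automorphism associated with $\tau$ (or $\tau^{-1}$, depending on convention). The proof of Lemma \ref{simple_group_product} together with Lemma \ref{lem:AutOutSO3N} identifies $\mathrm{Out}(\mathrm{SO}(3)^N)\cong S_N$ exactly through these permutation automorphisms. Consequently a nontrivial permutation automorphism is not inner, since inner automorphisms fix each factor $G_i$. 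So $\pi\circ\theta|_{\overline{S}_N}$ is an injective homomorphism onto $\mathrm{Out}(\mathcal{H})$. This yields surjectivity of $\pi\circ\theta$, so the quotient in \eqref{shortexactsequence} is all of $S_N$. It also shows $\overline{S}_N\cap({\rm C}_\mathcal{G}(\mathcal{H})\cdot\mathcal{H})=\{\mathrm{id}\}$, because that intersection lies in $\ker(\pi\circ\theta)\cap\overline{S}_N$.

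\textbf{Step 3: conclusion.} Every $x\in{\rm N}_\mathcal{G}(\mathcal{H})$ can be written as $x=k\,\psi_\tau$ with $k\in\ker(\pi\circ\theta)={\rm C}_\mathcal{G}(\mathcal{H})\cdot\mathcal{H}$, where $\psi_\tau$ is the unique element with $\pi\theta(\psi_\tau)=\pi\theta(x)$. The kernel is normal and the intersection is trivial, so we obtain the semidirect product decomposition.

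The main obstacle is Step 1, namely verifying that conjugation by $\psi_\tau$ really maps $\mathcal{H}$ into itself and induces the factor permutation. Since $\mathcal{H}$ is connected with Lie algebra $\mathrm{ad}(W_1)$, I would first check this infinitesimally: $\psi_\tau$ is a Lie algebra automorphism (being $\mathrm{Ad}_{U_\tau}$) preserving $W_1$, and $\psi_\tau\,\mathrm{ad}_X\,\psi_\tau^{-1}=\mathrm{ad}_{\psi_\tau X}$. Exponentiating then gives $\psi_\tau\mathcal{H}\psi_\tau^{-1}=\mathcal{H}$. The same computation shows that the $j$-th $\mathfrak{so}(3)$ factor, spanned by $\mathrm{ad}$ of the weight-one elements supported at position $j$, is sent to the factor at position $\tau^{-1}(j)$. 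This is exactly the permutation automorphism required in Step 2.
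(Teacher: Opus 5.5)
Your proposal is correct and follows essentially the same route as the paper: you show that $\overline{S}_N$ (which equals the group generated by the $\mathrm{Ad}_{U_{kl}}$) normalizes $\mathcal{H}$ by permuting tensor factors, and that it maps isomorphically onto $\mathrm{Out}(\mathcal{H})\cong S_N$, so it splits \eqref{shortexactsequence}. The one difference is that you argue injectivity explicitly (a nontrivial factor permutation is not inner, since inner automorphisms fix each factor), whereas the paper only asserts that the image of $\Omega$ is isomorphic to $S_N$.
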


        \noindent
        {\bf Proof}:
        We prove this lemma by showing that the short exact sequence \eqref{shortexactsequence} splits.
        Let $\Omega$ be the subgroup of ${\rm Inn}(\mathfrak{su}(2^N))$ generated by $\{ \mathrm{Ad}_{U_{ij}} \mid 1 \leq i < j \leq N \}$. By Lemma~\ref{lem:UijSN}, $\Omega$ is the same 
        as $\overline{S}_N$ defined by equation \eqref{eqn:Sbar}. In particular, $\Omega$
        is isomorphic to the permutation group $S_N$. So there exists a group homomorphism
        $\tau: \Omega \to S_N$
         such that 
        for all $\omega \in \Omega$ and $I = (i_1, \, \ldots\, , \, i_N)  \in \mathcal{I}_N^0$, 
        \begin{equation} \label{eqn:P=SN}
        \omega(\tilde\sigma_I) = \tilde\sigma_{(i_{\tau(\omega)(1)}, \ldots, i_{\tau(\omega)(N)})}. 
        \end{equation}
        As noted in equation \eqref{eqn:SPhiInG}, 
        $\Omega = \overline{S}_N \subset \mathcal{G}$. 
         Moreover, any element in $\mathcal{H}$ can be written as 
        $\mathrm{Ad}_{U_1 \otimes \cdots \otimes U_N}$ with $U_i \in \mathrm{SU}(2)$.
        For any $\omega \in \Omega$, we have
        \begin{equation}\label{permutation}
        \omega \circ \mathrm{Ad}_{U_1 \otimes \cdots \otimes U_N} \circ \omega^{-1} 
            = \mathrm{Ad}_{U_{\tau(\omega)(1)} \otimes \cdots \otimes U_{\tau(\omega)(N)}}
            \in \mathcal{H}.
        \end{equation}
        Therefore, $\Omega \subset  {\rm N}_{\mathcal{G}}(\mathcal{H})$. 
        
        Now consider the map ${\rm N}_\mathcal{G} (\mathcal{H}) \to
        {\rm N}_\mathcal{G} (\mathcal{H})/({\rm C}_\mathcal{G} (\mathcal{H}) \cdot \mathcal{H})
        \subseteq \mathrm{Out}(\mathcal{H})$
        in the short exact sequence  \eqref{shortexactsequence}.
        Equation \eqref{permutation} implies that the image of $\Omega$ under this map  is isomorphic
        to $S_N$. By Lemmas \ref{lem:SU2SO3N} and \ref{lem:AutOutSO3N},  $\mathrm{Out}(\mathcal{H})$
        is also isomorphic to $S_N$. Hence
        \[ {\rm N}_\mathcal{G} (\mathcal{H})/({\rm C}_\mathcal{G} (\mathcal{H}) \cdot \mathcal{H}) = \mathrm{Out}(\mathcal{H})
        \cong S_N.\]
        The map $\tau^{-1}: S_N \to \Omega \subset  {\rm N}_{\mathcal{G}}(\mathcal{H}) $ then gives the 
        splitting map for the short exact sequence~\eqref{shortexactsequence}.
        It follows that ${\rm N}_\mathcal{G} (\mathcal{H})$ is a semidirect product
        of ${\rm C}_\mathcal{G} (\mathcal{H}) \cdot \mathcal{H}$ and $S_N$
        (see, for example, \cite[Exercise 11 on Page 26]{AlperinB}).        
        $\Box$

    We can now complete the proof of Theorem \ref{normalizer}.
    
        \noindent
        {\bf Proof of Theorem \ref{normalizer}}:

        Note that the center of $\mathrm{SO}(3)$ is trivial since $\mathrm{SO}(3)$  is a simple group.
        This implies that the center of $\mathcal{H} \cong \mathrm{SO}(3)^N$ is also trivial.
        Hence ${\rm C}_\mathcal{G} (\mathcal{H}) \cap \mathcal{H} = \{e\}$. Moreover,
        elements in ${\rm C}_\mathcal{G} (\mathcal{H})$ commute with elements in $\mathcal{H}$.
        So ${\rm C}_\mathcal{G} (\mathcal{H}) \cdot \mathcal{H} \cong \mathcal{H} \times {\rm C}_\mathcal{G} (\mathcal{H})$.
        Theorem \ref{normalizer} then follows from equation \eqref{eqn:IsomNormal} and Lemmas \ref{split}, 
        \ref{lem:SU2SO3N},  and \ref{centralizer}.
         $\Box$

\section{Calculation of the Isometry Group}

    In this section, we give a complete description of $\mathrm{Isom}(\mathrm{SU}(2^N), g)$
    for any penalty metric  $g$ satisfying Condition \eqref{cond:V1_invariant}. 
    We first give a complete description of its isotropy subgroup $\mathrm{Isom}_e(\mathrm{SU}(2^N), g)$
    which is isomorphic to a subgroup $\mathcal{L}(\mathrm{Isom}_e(\mathrm{SU}(2^N), g))$
    in $\mathrm{O}(\mathfrak{su}(2^N), g)$.
    Recall elements of $\mathcal{L}(\mathrm{Isom}_e(\mathrm{SU}(2^N), g))$ are just differentials of
    elements in $\mathrm{Isom}_e(\mathrm{SU}(2^N), g)$ at $e$.
    We have proved in Theorem \ref{normalizer} that 
    \[ \mathcal{L}(\mathrm{Isom}_e(\mathrm{SU}(2^N), g))  \subseteq
        \left( \,\mathcal{H} \times \Phi \, \right) \rtimes \overline{S}_N, \]
    where $\mathcal{H}$, $\overline{S}_N$ and $\Phi$ are defined by equations \eqref{eqn:CalGH}, \eqref{eqn:Sbar} and \eqref{eqn:Phi} respectively.
    We already know that  $\mathcal{H}$ is the identity component of $\mathcal{L}(\mathrm{Isom}_e(\mathrm{SU}(2^N), g))$. It is also easy to see the following:
    
    \begin{lemma} \label{lem:SNIsom}
    $\overline{S}_N \subset \mathcal{L}(\mathrm{Isom}_e(\mathrm{SU}(2^N), g)).$
    \end{lemma}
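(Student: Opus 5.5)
We need to show each $\psi_\tau \in \overline{S}_N$ is the differential at $e$ of some isometry in $\mathrm{Isom}_e(\mathrm{SU}(2^N), g)$. The natural candidates are inner automorphisms. By the proof of Lemma \ref{split}, $\overline{S}_N$ coincides with the group $\Omega$ generated by $\{\mathrm{Ad}_{U_{kl}} \mid 1 \leq k < l \leq N\}$, where $U_{kl}\in\mathrm{SU}(2^N)$ is given by equation \eqref{SWAP}. Since $\mathcal{L}(\mathrm{Isom}_e(\mathrm{SU}(2^N), g))$ is a group, it suffices to show that each generator $\mathrm{Ad}_{U_{kl}}$ lies in it.

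So fix $k \neq l$ and consider the inner automorphism $\mathcal{A}\mathrm{d}_{U_{kl}}$ of $\mathrm{SU}(2^N)$. It fixes $e$, and its differential at $e$ is $\mathrm{Ad}_{U_{kl}}$. By Lemma \ref{lem:UijSN}, $\mathrm{Ad}_{U_{kl}}(\tilde\sigma_I) = \tilde\sigma_{I'}$, where $I'$ is $I$ with $i_k$ and $i_l$ swapped.

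Swapping two entries does not change the number of nonzero entries, so $w(\tilde\sigma_{I'}) = w(\tilde\sigma_I)$ and hence $q_{I'} = q_I$. The map $I \mapsto I'$ is also a bijection of $\mathcal{I}_N^0$. Therefore $\mathrm{Ad}_{U_{kl}}$ sends the $g$-orthogonal basis $\{\tilde\sigma_I\}$ to itself while preserving the norms $q_I$, so it is a linear isometry of $(\mathfrak{su}(2^N), g|_e)$. Equivalently, this is the observation already recorded in \eqref{eqn:SPhiInG} that $\overline{S}_N \subset \mathcal{G}$.

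Lemma \ref{automorphism_isometry} now gives that $\mathcal{A}\mathrm{d}_{U_{kl}}$ is an isometry of $(\mathrm{SU}(2^N), g)$. It fixes $e$, so it belongs to $\mathrm{Isom}_e(\mathrm{SU}(2^N), g)$, and its image under $\mathcal{L}$ is $\mathrm{Ad}_{U_{kl}}$. Taking products of these generators gives $\Omega = \overline{S}_N \subset \mathcal{L}(\mathrm{Isom}_e(\mathrm{SU}(2^N), g))$.

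There is no real obstacle. The only point needing care is the identification $\Omega = \overline{S}_N$: transpositions generate $S_N$, and by Lemma \ref{lem:UijSN} the map $\mathrm{Ad}_{U_{kl}}$ equals $\psi_\tau$ for the transposition $\tau = (k\,l)$.
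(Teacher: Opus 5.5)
Your proof is correct and follows the paper's route. You identify $\overline{S}_N$ with the group $\Omega$ generated by the $\mathrm{Ad}_{U_{kl}}$ (Lemma \ref{lem:UijSN}), note by \eqref{eqn:SPhiInG} that these are differentials of inner automorphisms preserving $g|_e$, and conclude via Lemma \ref{automorphism_isometry}; the only difference is that you work generator by generator and use that $\mathcal{L}(\mathrm{Isom}_e(\mathrm{SU}(2^N), g))$ is a group, whereas the paper applies Lemma \ref{automorphism_isometry} to every element of $\Omega$ at once.
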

  
    \noindent
    {\bf Proof}:
    We have seen in the proof of Lemma \ref{split} that $\overline{S}_N = \Omega \subset
    \mathrm{Inn}(\mathfrak{su}(2^N))$ whose elements
     are the differentials of elements in $\mathrm{Inn}(\mathrm{SU}(2^N))$ at $e$. By Lemma \ref{automorphism_isometry} and equation~\eqref{eqn:SPhiInG},
    elements in $\overline{S}_N$ are the differentials of elements in $\mathrm{Isom}_e(\mathrm{SU}(2^N), g)$ 
    at $e$.
    The lemma is thus proved.
    $\Box$

    To determine $\mathcal{L}(\mathrm{Isom}_e(\mathrm{SU}(2^N), g))$, it remains to find which elements in $\Phi$
     lie in $\mathcal{L}(\mathrm{Isom}_e(\mathrm{SU}(2^N), g))$. 
     Using the fact that every isometry preserves the curvature tensor, we can exclude most elements
     in $\Phi$.
    
    \begin{lemma}\label{curvature_restriction}
    Let $g$  be a  penalty metric on $\mathrm{SU}(2^N)$. If an element
    $ \phi \in \Phi$  is the differential of an isometry with respect to $g$,  
    then there exist $c_{1}, c_{2} \in \{\pm 1\}$ such that
    \begin{equation} \label{eqn:phic1c2}
    \phi(\tilde\sigma_I)  
    = \begin{cases} c_{1} \, \tilde\sigma_I, &\textnormal{if } w(\tilde\sigma_I) \textnormal{ is odd,}\\
                    c_{2} \, \tilde\sigma_I, & \textnormal{if }w(\tilde\sigma_I) \textnormal{ is even,}
      \end{cases}
    \end{equation}
    for all $I \in \mathcal{I}_N^0$.
    \end{lemma}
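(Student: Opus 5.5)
The plan is to use that an isometry fixing $e$ preserves the curvature tensor at $e$. Suppose $\phi=\phi_\epsilon\in\Phi$ is the differential at $e$ of some $\varphi\in\mathrm{Isom}_e(\mathrm{SU}(2^N),g)$. Then
\[
R(\phi X,\phi Y,\phi Z,\phi W)=R(X,Y,Z,W)
\]
for all $X,Y,Z,W\in\mathfrak{su}(2^N)$. Write $\mathrm{supp}(I)$ for the element of $\mathcal{P}$ recording the nonzero positions of $I$. Since $\phi(\tilde\sigma_I)=\epsilon(\mathrm{supp}(I))\,\tilde\sigma_I$, we get
\[
\epsilon(\mathrm{supp}\,I)\,\epsilon(\mathrm{supp}\,J)\,\epsilon(\mathrm{supp}\,K)\,\epsilon(\mathrm{supp}\,L)=1
\quad\text{whenever } R_{IJKL}\neq 0 .
\]
Sectional-type components, where the supports pair up, give no information. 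So the task is to find enough components $R_{IJKL}\neq 0$ with genuinely different supports, and then combine the resulting multiplicative relations to show that $\epsilon(P)$ depends only on the parity of $|P|$. Setting $c_1:=\epsilon(\{1\})$ and $c_2:=\epsilon(\{1,2\})$ then gives \eqref{eqn:phic1c2}.

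The components I would use are those with $[\tilde\sigma_I,\tilde\sigma_J]=\pm2\tilde\sigma_S$ and $[\tilde\sigma_S,\tilde\sigma_K]=\pm2\tilde\sigma_L$, with $L$ different from $I$ and $J$. By Lemma~\ref{Paulimainthm} and formula \eqref{curvature_tensor_formula}, the third term of $R_{IJKL}$ is then $\mp c_{S,K}\,q_L$ times a sign. I would choose $K$ so that $[\tilde\sigma_I,\tilde\sigma_K]=0$ and $[\tilde\sigma_J,\tilde\sigma_K]=0$; this kills the first two terms, so $R_{IJKL}=\pm 4c_{S,K}\,q_L$. Concretely, I would take the following families.
\begin{enumerate}
\item[(a)] $I$ of weight one at site $a$, $J$ with support $\{a\}\cup P$ where $a\notin P$, and $K$ a single Pauli at a site $b\notin P\cup\{a\}$ placed so that it anticommutes with $S$ but commutes with $I$ and $J$. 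This needs $S$ to be nonzero at $b$, so I would adjust which sites carry nonidentity Paulis.
\item[(b)] More flexibly, $K$ supported on two sites $\{x,y\}$, chosen to commute with both $I$ and $J$ (an even number of anticommuting positions with each) but to anticommute with $S$.
\end{enumerate}
Since $q$'s are positive, one component that always works is $R_{IJ S'K'}$ built from the triple relations of Lemma~\ref{Paulimainthm}. From such choices I expect relations of the shape
\[
\epsilon(P)\,\epsilon(P\cup\{x\})\,\epsilon(Q)\,\epsilon(Q\cup\{x\})=1
\quad\text{and}\quad
\epsilon(P)=\epsilon\bigl((P\setminus\{x\})\cup\{y\}\bigr).
\]
These say that $\epsilon$ is invariant under moving a site, and that $\epsilon(P)\epsilon(P\cup\{x\})$ is independent of $P$. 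The second gives $\epsilon(P)=\epsilon(Q)$ whenever $|P|=|Q|$. The first then shows $\epsilon$ on sizes $k$ and $k+2$ agree, by comparing the steps $k\to k+1$ and $k+1\to k+2$. Together these give the parity statement.

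The main obstacle is verifying that the chosen components really are nonzero, because the coefficient $c_{S,K}=\tfrac12\bigl(1+(q_K-q_S)/q_L\bigr)$ can vanish for special penalty factors. To handle this, I would first try to choose $K$ of the same weight as $S$, so that $q_K=q_S$ and $c_{S,K}=\tfrac12$. If that is not possible for a needed relation, I would compare with the component where the roles of $S$ and $K$ are swapped, namely $R_{K?\,\cdot\,\cdot}$ with $c_{K,S}$. Since $c_{S,K}+c_{K,S}=1$, at least one of the two is nonzero, and both components yield the same product of $\epsilon$'s. With that in hand, the remaining work is the combinatorial bookkeeping of supports, which I expect to be routine once $N\ge2$ provides enough sites. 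For $N=2$ the relations needed are few and can be checked directly.
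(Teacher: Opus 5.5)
Your overall framework matches the paper's: curvature preservation gives $s_Is_Js_Ks_L=1$ whenever $R_{IJKL}\neq 0$, and everything hinges on exhibiting enough nonzero components. But there is a genuine gap: the mechanism you propose for producing nonzero components cannot work.

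\textbf{Why your components vanish.} You want all of the following at once:
\begin{itemize}
\item $[\tilde\sigma_I,\tilde\sigma_J]=\pm2\tilde\sigma_S$;
\item $[\tilde\sigma_I,\tilde\sigma_K]=0$ and $[\tilde\sigma_J,\tilde\sigma_K]=0$;
\item $[\tilde\sigma_S,\tilde\sigma_K]\neq 0$.
\end{itemize}
This is impossible. By the Jacobi identity,
$[\tilde\sigma_K,[\tilde\sigma_I,\tilde\sigma_J]]=[[\tilde\sigma_K,\tilde\sigma_I],\tilde\sigma_J]+[\tilde\sigma_I,[\tilde\sigma_K,\tilde\sigma_J]]=0$,
so $K$ automatically commutes with $S$. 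Equivalently, by \eqref{eqn:sigma-IJ}, $s(K,S)\equiv s(K,I)+s(K,J)\pmod 2$. Hence in \eqref{curvature_tensor_formula} the third term dies together with the first two, and every component in your families (a) and (b) is identically zero.

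In (a) you can also see this directly. $S$ is supported in $\mathrm{supp}(I)\cup\mathrm{supp}(J)=\{a\}\cup P$, so a single Pauli at a site $b\notin P\cup\{a\}$ always commutes with it. No rearrangement of which sites carry nonidentity Paulis changes this.

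Consequently neither of your relations, $\epsilon(P)\epsilon(P\cup\{x\})\epsilon(Q)\epsilon(Q\cup\{x\})=1$ and $\epsilon(P)=\epsilon((P\setminus\{x\})\cup\{y\})$, is actually derived; they remain ``expected.'' The fallback via $c_{S,K}+c_{K,S}=1$ is not attached to any concrete nonzero component. Your deduction of the parity statement from those two relations is correct, but it is the easy part.

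\textbf{What is missing.} You need components in which the third argument fails to commute with at least one of the first two, so that a first- or second-type term in \eqref{curvature_tensor_formula} survives. You then need an explicit computation showing the total is nonzero for \emph{all} positive penalty factors. With several terms present they can genuinely cancel for special $q$'s, so this is where the real work lies. The paper handles it in two ways.
\begin{itemize}
\item It finds components that are manifestly nonzero: $R_{IKJL}=-q_1^2/q_2$ for two weight-one sites; $R_{IKJL}=q_3$ for two weight-two supports sharing one site; $R_{IKJL}=q_1$ linking weights $1$ and $3$.
\item For the general weight reduction from $k+2$ to $k$, it uses a pair of components, $R_{IJKL}=q_1(q_k+q_{k+2}-q_3)/q_k$ and $R_{IKLJ}=-q_1(q_3+q_{k+2}-q_k)/q_3$. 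If both vanished, then $q_{k+2}=0$, so at least one is nonzero.
\end{itemize}
To close the gap you would have to supply verified components of this kind, both for moving supports within a fixed weight and for linking weights that differ by two.
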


    \noindent
    {\bf Proof}:
    Recall $\phi$ always has the form given by equation \eqref{eqn:phiepsilon}. For every $I \in \mathcal{I}_N^0$,
    there exists $s_I \in \{ \pm 1\}$ such that $\phi(\tilde\sigma_I) = s_I \tilde\sigma_I$. 
    Assume  $ \phi$  is the differential of an isometry with respect to 
    the metric $g$. Then $ \phi$  must preserve the curvature tensor $R$ corresponding to $g$.
    For any $I, J, K, L \in \mathcal{I}_N^0$, let $R_{IJKL}:= R(\tilde\sigma_I, \tilde\sigma_J, \tilde\sigma_K, \tilde\sigma_L)$.
    Then we must have
    \[ R_{IJKL}= R(\phi(\tilde\sigma_I), \phi(\tilde\sigma_J), 
                    \phi(\tilde\sigma_K), \phi(\tilde\sigma_L))
            = s_I s_J s_K s_L R_{IJKL}, \]
    where the last equality follows from the linearity of the curvature tensor. Hence we have
    \begin{equation} \label{eqn:4s=1}
     s_I s_J s_K s_L = 1 \hspace{20pt} {\rm if} \,\,\, R_{IJKL} \neq 0. 
    \end{equation}     
    Assume the penalty  factors of $g$ are  $q_1, \cdots, q_N$. Then $R_{IJKL}$
    is given by equation \eqref{curvature_tensor_formula}.       
    The lemma will be proved using equations \eqref{eqn:4s=1} and \eqref{curvature_tensor_formula}.
   We only need to show that for all $I, J \in \mathcal{I}_N^0$, $s_I = s_J$ 
    if $w(\tilde\sigma_I)$ and $w(\tilde\sigma_J)$ have the same parity.
    If we can find $K, L \in \mathcal{I}_N^0$ such that
    $s_K=s_L$ and $R_{IJKL} \neq 0$, then equations \eqref{eqn:4s=1} implies
    $s_I s_J s_K^2=1$, which in turn implies $s_I=s_J$.
    Recall for any $1 \leq r_1 < \cdots < r_k \leq N$, the space $V_{(r_1, \ldots, r_k)}$
    is defined by equation \eqref{def_irr}. Note that $V_{(r_1, \ldots, r_k)}$
    is still well defined if $r_1, \ldots, r_k$ are not in the increasing order. 
    By definition of $\phi$, $s_K = s_L$ if $\tilde\sigma_K, \tilde\sigma_L \in V_{(r_1, \ldots, r_k)}$
    for some $r_1, \ldots, r_k$. We will prove the lemma in the following four steps.

    {\bf Step 1}:  Prove  $s_I = s_J$ if  $w(\tilde\sigma_I) = w(\tilde\sigma_J) = 1$.

    In this case, there exist $r_1, r_2 \in \{1, \ldots, N\}$ such that
    $\tilde\sigma_I \in V_{(r_1)}$ and $\tilde\sigma_J \in V_{(r_2)}$.
    If $r_1=r_2$, $s_I = s_J$ holds trivially. So we only need to consider
    the case where $r_1 \neq r_2$. Without loss of generality, we may assume
    $r_1 < r_2$. Since $s_I = s_{I'}$ and $s_J = s_{J'}$ whenever
    $\tilde\sigma_{I'} \in V_{(r_1)}$ and $\tilde\sigma_{J'} \in V_{(r_2)}$.
    So we only need to prove  $s_I = s_J$ for some particular choice of
    $\tilde\sigma_I \in V_{(r_1)}$ and $\tilde\sigma_J \in V_{(r_2)}$.    
    It is convenient to use the notation $\sigma_k^{(\alpha)}$ defined in equation~\eqref{single_pauli_definition}. 
    Choose
    \[
    \tilde\sigma_I = -i \, \sigma^{(r_1)}_1, \hspace{15pt} 
    \tilde\sigma_J = -i \, \sigma^{(r_2)}_2, \hspace{15pt}
    \tilde\sigma_K = -i \, \sigma^{(r_1)}_2\sigma_1^{(r_2)}, \hspace{15pt}
    \tilde\sigma_L = -i \, \sigma_3^{(r_1)}\sigma_3^{(r_2)}.
    \]
   Then  $\tilde\sigma_I \in V_{(r_1)}$, $\tilde\sigma_J \in V_{(r_2)}$, and
   $\tilde\sigma_K, \tilde\sigma_L \in V_{(r_1, r_2)}$.   
    A straightforward computation using equation \eqref{curvature_tensor_formula} shows
    $R_{IKJL} = -q_1^2 / q_2 \neq 0.$
    Since $s_K = s_L$, by equation 
     \eqref{eqn:4s=1},  we have  $s_I = s_J$.

    {\bf Step 2}: Prove  $s_I = s_J$ if $w(\tilde\sigma_I) = w(\tilde\sigma_J) = 2$.
    
    In this case, there exists $1 \leq a_1 < a_2 \leq N$ and $1 \leq b_1 < b_2 \leq N$ such
    that  $\tilde\sigma_I \in V_{(a_1, a_2)}$ and $\tilde\sigma_J \in V_{(b_1, b_2)}$.
    If $\{ a_1, a_2\} = \{b_1, b_2\}$, then $s_I = s_J$  holds trivially.
    It suffices to show that $s_I = s_J$ holds for the case where $\{ a_1, a_2\} \cap \{b_1, b_2\}$ consists
    of only one element.
    This is because if $\{ a_1, a_2\} \cap \{b_1, b_2\}$ is empty, we can choose
     an arbitrary $\tilde\sigma_K \in V_{(a_1, b_1)}$.
    We can prove $s_I=s_J$ by showing $s_I = s_K$ and $s_K = s_J$.

    Without loss of generality, we may assume $a_1=b_1$ and $a_2 \neq b_2$. As in Step 1, it suffices to
    prove $s_I=s_J$ for any particular choice of  $\tilde\sigma_I \in V_{(a_1, a_2)}$ and $\tilde\sigma_J \in V_{(b_1, b_2)}$.
   Choose
    \[
    \begin{aligned}
    \tilde\sigma_I &= -i \, \sigma_1^{(a_1)}\sigma_1^{(a_2)}, & 
    \tilde\sigma_J &= -i \, \sigma_2^{(a_1)}\sigma_1^{(b_2)}, \\
    \tilde\sigma_K &= -i \, \sigma_1^{(a_1)}\sigma_2^{(a_2)}\sigma_2^{(b_2)}, & 
    \tilde\sigma_L &= -i \, \sigma_2^{(a_1)}\sigma_3^{(a_2)}\sigma_3^{(b_2)}.
    \end{aligned}
    \]
    Then $s_K=s_L$ since $\tilde\sigma_K, \tilde\sigma_L \in V_{(a_1, a_2, b_2)}$.
    A straightforward computation using equation~\eqref{curvature_tensor_formula} shows
    $ R_{IKJL} = q_3 \neq 0.$
   By equation \eqref{eqn:4s=1}, we have $s_I = s_J$.

    {\bf Step 3}: Prove the {\bf claim}: 
    For any $J \in \mathcal{I}_N^0$ with $w(\tilde\sigma_J) \geq 3$, we can find $I \in \mathcal{I}_N^0$,
    such that $w(\tilde\sigma_I) = w(\tilde\sigma_J) - 2$ and $s_I = s_J$.

    Assume $w(\tilde\sigma_J) = k+2$ with $k \geq 1$ and $\tilde\sigma_J \in V_{(r_1, \ldots, r_{k+2})}$ for some
    $1 \leq r_1 < \ldots < r_{k+2} \leq N$.
    Since $s_{J'} = s_{J}$ for all  $\tilde\sigma_{J'} \in V_{(r_1, \ldots, r_{k+2})}$, we may only consider
    one particular $\tilde\sigma_J \in V_{(r_1, \ldots, r_{k+2})}$.

    If $k = 1$, we can choose $\tilde\sigma_J = -i \, \sigma_2^{(r_1)}\sigma_3^{(r_2)}\sigma_1^{(r_3)}$ and
     \[
     \tilde\sigma_I = -i \, \sigma_1^{(r_1)},  \hspace{20pt} 
     \tilde\sigma_K = -i \, \sigma_2^{(r_1)}\sigma_3^{(r_2)}, \hspace{20pt}
     \tilde\sigma_L = -i \, \sigma_1^{(r_1)}\sigma_1^{(r_3)}.
     \]
     By Step 2, $s_K=s_L$.  A straightforward computation using equation~\eqref{curvature_tensor_formula} shows
     $  R_{IKJL} = q_1 \neq 0$.  By equation \eqref{eqn:4s=1}, we have
     $s_I=s_J$.
     It follows that  the claim holds for all $J$ with $w(\tilde\sigma_J) = 3$. By the result of Step 1, we also have
     $s_I=s_J$ whenever  $w(\tilde\sigma_I) = 1$ and $w(\tilde\sigma_J) = 3$.

    If $k > 1$, we can choose
     $ \tilde\sigma_J = -i \, \sigma_1^{(r_1)}\cdots \sigma_1^{(r_{k - 1})}\sigma_3^{(r_k)}
                \sigma_3^{(r_{k + 1})}\sigma_1^{(r_{k + 2})} $
    and
    \[
    \tilde\sigma_I = -i \, \sigma_1^{(r_1)} \cdots \sigma_1^{(r_{k - 1})}\sigma_2^{(r_k)}, \hspace{20pt}
    \tilde\sigma_K = -i \, \sigma_3^{(r_k)}, \hspace{20pt}
    \tilde\sigma_L = -i \, \sigma_2^{(r_k)}\sigma_3^{(r_{k + 1})}\sigma_1^{(r_{k + 2})}.
    \]
    By the result of the last paragraph, $s_K=s_L$.  
    A straightforward computation using equation~\eqref{curvature_tensor_formula} shows
    \[
    R_{IJKL} = \frac{q_1(q_k + q_{k + 2} - q_3)}{q_k},  \hspace{20pt}
    R_{IKLJ} = -\frac{q_1(q_3 + q_{k + 2} - q_k)}{q_3}.
    \]
    If both $R_{IJKL} = 0$ and $R_{IKLJ} = 0$, we would obtain $q_{k + 2} = 0$, which contradicts the positivity of the metric. Consequently, either $R_{IJKL} \neq 0$ or $R_{IKLJ} \neq 0$.  Hence we can still use equation \eqref{eqn:4s=1} to
    obtain $s_I = s_J$. The claim is thus proved.

    {\bf Step 4}: By the claim in Step 3 and an induction on weight, we can show that
    for any $\tilde\sigma_J$, there exists $\tilde\sigma_I$ with $w(\tilde\sigma_I) \in \{1, 2\}$ such that
    $s_I=s_J$. The lemma then follows from Steps 1 and 2.
     $\Box$

    We can exclude more elements in $\Phi$ from being an element in $\mathcal{L}(\mathrm{Isom}_e(\mathrm{SU}(2^N), g))$
    using classification of symmetric spaces.
    
    \begin{lemma}\label{lem:symmetric_condition}
     Let $g$  be a  penalty metric on $\mathrm{SU}(2^N)$ which is not bi-invariant. If 
    $\phi \in \Phi$  is the differential of an isometry with respect to $g$,  
    then $\phi = {\rm id}$ or $\varphi_0$, where $\varphi_0$ is defined by
    \begin{equation} \label{eqn:varphi0}
    \varphi_0(\tilde\sigma_I)  
    := (-1)^{w(\tilde\sigma_I)+1} \tilde\sigma_I
    \end{equation}
    for all $I \in \mathcal{I}_N^0$. On the other hand, $\varphi_0$ is indeed the differential of an isometry.
    \end{lemma}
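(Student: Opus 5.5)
By Lemma \ref{curvature_restriction}, any $\phi\in\Phi$ that is the differential of an isometry has the form \eqref{eqn:phic1c2} with $c_1,c_2\in\{\pm1\}$. This leaves exactly four candidates:
\[
\mathrm{id}, \qquad \varphi_0, \qquad -\varphi_0, \qquad -\mathrm{id}.
\]
The plan is in three steps: show first that $\varphi_0$ is realized by an explicit isometry, then show that $-\mathrm{id}$ is not the differential of an isometry, and finally exclude $-\varphi_0$ by composing it with $\varphi_0$.

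\emph{Realizing $\varphi_0$.} Let $\kappa$ be complex conjugation on $\mathrm{SU}(2^N)$, and let $P=\sigma_2\otimes\cdots\otimes\sigma_2$, rescaled by a phase if necessary so that it lies in $\mathrm{SU}(2)^{\otimes N}$; this does not change $\mathrm{Ad}_P$. On $\tilde\sigma_I=-i\sigma_I$, conjugation gives
\[
\kappa_{*e}(\tilde\sigma_I)=-(-1)^{n_2(I)}\tilde\sigma_I,
\]
where $n_2(I)$ is the number of entries of $I$ equal to $2$. This holds because $\overline{\sigma_2}=-\sigma_2$ while $\sigma_0,\sigma_1,\sigma_3$ are real. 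On the other hand, $\mathrm{Ad}_P$ changes the sign of each tensor factor equal to $\sigma_1$ or $\sigma_3$ and fixes factors equal to $\sigma_0$ or $\sigma_2$, so
\[
\mathrm{Ad}_P(\tilde\sigma_I)=(-1)^{n_1(I)+n_3(I)}\tilde\sigma_I .
\]
Composing the two gives the sign $-(-1)^{w(\tilde\sigma_I)}=(-1)^{w(\tilde\sigma_I)+1}$, so $\varphi_0=(\kappa\circ\mathcal{A}\mathrm{d}_P)_{*e}$. Both maps send each $\tilde\sigma_I$ to $\pm\tilde\sigma_I$, so they preserve $g$ at $e$. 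Since both are automorphisms, Lemma \ref{automorphism_isometry} shows they are isometries.

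\emph{Excluding $-\mathrm{id}$.} Suppose $-\mathrm{id}=\varphi_{*e}$ for some $\varphi\in\mathrm{Isom}_e$. Then $\varphi$ is a geodesic symmetry at $e$. Conjugating by right translations gives a geodesic symmetry at every point, so $(\mathrm{SU}(2^N),g)$ is a Riemannian symmetric space. We also know $\mathrm{Isom}(\mathrm{SU}(2^N),g)_0$ exactly: by Theorem \ref{isotropycomponent} it is $R(\mathrm{SU}(2^N))\rtimes\overline{\mathcal{A}\mathrm{d}}(\mathrm{SU}(2)^{\otimes N})$. This theorem requires Condition \eqref{cond:V1_invariant}. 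The present lemma does not assume that condition, so if it is genuinely needed I would either add it as a hypothesis here or replace it by the Ochiai--Takahashi description $\mathrm{Isom}_0\subseteq R(G)L(G)$ from Proposition \ref{simpleLiegp}.

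The Lie algebra of $\mathrm{Isom}_0$ is then $\mathfrak{g}'=\mathfrak{su}(2^N)\oplus\mathfrak{k}$, where $\mathfrak{k}\cong\mathfrak{su}(2)^N$ is the isotropy algebra. The symmetric-space structure provides an involution $\sigma$ of $\mathfrak{g}'$ whose fixed algebra is the isotropy algebra, of dimension $3N$. The involution $\sigma$ permutes the simple ideals of $\mathfrak{g}'$. Since $\mathfrak{su}(2^N)$ is the unique simple ideal of its dimension, $\sigma$ preserves it. Its fixed subalgebra inside $\mathfrak{su}(2^N)$ therefore has dimension at most $3N$. However, every fixed subalgebra of an involution of $\mathfrak{su}(n)$, including the identity as a degenerate case, has dimension at least $\dim\mathfrak{so}(n)=n(n-1)/2$. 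With $n=2^N$ and $N\ge2$ this exceeds $3N$, a contradiction. So $-\mathrm{id}$ is not the differential of an isometry.

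An alternative, lower-tech route is also available. If $-\mathrm{id}$ is the differential of an isometry, the tensor $\nabla R$, which has odd degree, vanishes at $e$, and hence everywhere by homogeneity. One could then look for explicit indices with $(\nabla R)_e\neq0$ using the Milnor-type formulas, exploiting that $g$ is not bi-invariant. I expect the main obstacle of the whole proof to be this step: making the dimension count fully rigorous, in particular matching the isotropy algebra with $\sigma$'s fixed set and handling the identity-component issues.

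\emph{Excluding $-\varphi_0$.} If $-\varphi_0$ were realized by an isometry, composing it with the isometry realizing $\varphi_0$ would realize $-\mathrm{id}$, contradicting the previous step. Hence $\phi\in\{\mathrm{id},\varphi_0\}$, which proves the lemma.
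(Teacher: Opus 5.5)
The parts of your proposal that agree with the paper are correct: the reduction to $\pm\mathrm{id},\pm\varphi_0$ via Lemma~\ref{curvature_restriction}, the realization $\varphi_0=(\kappa\circ\mathcal{A}\mathrm{d}_P)_{*e}$, and the exclusion of $-\varphi_0$ by composing with $\varphi_0$. The paper excludes $-\mathrm{id}$ differently: it notes that $(\mathrm{SU}(2^N),g)$ would be a symmetric space and cites the classification of compact simply connected irreducible symmetric spaces to force bi-invariance. Your replacement for that step has a genuine gap, for two reasons.

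First, the inequality fails at $N=2$. There $\dim\mathfrak{so}(4)=6=3N$, so ``at most $3N$'' and ``at least $n(n-1)/2$'' are compatible. Yet $N=2$ is fully within the lemma: every penalty metric on $\mathrm{SU}(4)$ with $q_1\neq q_2$ is a case the lemma covers.

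Second, the bound $3N$ comes from Theorem~\ref{isotropycomponent}, and hence from Condition~\eqref{cond:V1_invariant}, which the lemma does not assume. Adding that hypothesis proves a weaker statement. It suffices for Theorem~\ref{mainthm}, but it is not the lemma as stated.

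Your fallback to Proposition~\ref{simpleLiegp} only gives $\mathrm{Isom}_0=R(G)L(K)$ with $K$ a proper connected subgroup, and the isotropy algebra can then be large. Take $N=3$ and $q_1=q_3\neq q_2$. Since $\varphi_0$ is a Lie algebra automorphism with eigenspaces $W_1\oplus W_3$ and $W_2$, $\mathrm{ad}_X$ commutes with the operator $\Psi$ of \eqref{eqn:CalG} for every $X\in W_1\oplus W_3$. So the isotropy algebra has dimension at least $36>28=\dim\mathfrak{so}(8)$, and your inequality yields no contradiction.

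The missing idea is that no dimension count is needed. Let $\mathfrak{r}\cong\mathfrak{su}(2^N)$ be the ideal of $\mathfrak{g}'$ coming from $R(\mathrm{SU}(2^N))$.
\begin{itemize}
\item The fixed algebra $\mathfrak{g}'^{\sigma}$ is exactly the isotropy algebra, i.e.\ the Killing fields vanishing at $e$.
\item The elements of $\mathfrak{r}$ are generated by the free action of right translations; they are the left-invariant vector fields, which vanish nowhere unless zero. Hence $\mathfrak{r}\cap\mathfrak{g}'^{\sigma}=0$.
\item $\sigma$ preserves $\mathfrak{r}$: it is the unique simple ideal of dimension $4^N-1$, because non-bi-invariance forces $\dim K<\dim\mathrm{SU}(2^N)$.
\item So $\sigma|_{\mathfrak{r}}$ is an involution with no fixed vectors, i.e.\ $\sigma|_{\mathfrak{r}}=-\mathrm{id}$. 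This is not an automorphism of the non-abelian $\mathfrak{su}(2^N)$, a contradiction.
\end{itemize}
Equivalently, at the group level, $s\,R(G)\,s=R(G)$ gives $sR_xs=R_{\alpha(x)}$ with $\alpha=s$, so $s$ would be an automorphism with differential $-\mathrm{id}$.

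With this repair your argument works for all $N\geq 2$ using only Proposition~\ref{simpleLiegp}, and it is more self-contained than the paper's appeal to the classification. The $\nabla R$ route you mention is only a sketch: it would still need an explicit nonvanishing component of $(\nabla R)_e$.
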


    \noindent
    {\bf Proof}:
    By Lemma \ref{curvature_restriction}, we only need to consider $\phi$ given by equation \eqref{eqn:phic1c2},
    i.e. $\phi= \pm {\rm id}$, or $\phi = \pm \varphi_0$. We first show that $\varphi_0$ is the differential of
    an isometry by representing $\varphi_0$ as a composition of the differentials of two isometries.  
    
    Let $F: \mathrm{SU}(2^N) \to \mathrm{SU}(2^N)$ be the automorphism which maps $U \in \mathrm{SU}(2^N)$ to 
    its complex conjugate $\overline{U}$. 
    Let $f = F_{* e}: \mathfrak{su}(2^N) \to \mathfrak{su}(2^N)$ be the differential of $F$ at $e$.
    Then $f(A) = \overline{A}$ for all $A \in \mathfrak{su}(2^N)$. In particular, for all $I=(i_1, \ldots, i_N) \in \mathcal{I}_N^0$,
    \[ f(\tilde\sigma_I) = (-1)^{\mu(I)+1}\tilde\sigma_I,\]
     where $\mu(I)$ is the number of $k$ such that $i_k=2$.
     It follows that $f$ preserves any penalty metric $g$. By Lemma \ref{automorphism_isometry}, $F$ is an isometry
     with respect to $g$.
     
     Note that $i \sigma_2 \in \mathrm{SU}(2)$. Let 
     $U_0 := (i \sigma_2 )^{\otimes N} = i^N \sigma_2^{\otimes N} \in \mathrm{SU}(2)^{\otimes N}$.
    By Proposition \ref{prop:local_unitary_isometry}, $\mathrm{Ad}_{U_0}$ is the differential of
    an isometry $\mathcal{A}\mathrm{d}_{U_0}$. For any $I=(i_1, \ldots, i_N) \in \mathcal{I}_N^0$,
    \[
    \mathrm{Ad}_{U_0}(\tilde\sigma_I) = (-1)^{\eta(I)}\tilde\sigma_I,
    \]
    \noindent where $\eta(I)$ is the number of $k$ such that $i_k \in \{1, 3\}$.
    Since $\mu(I) + \eta(I) = w(\tilde\sigma_I)$, we have 
    \begin{equation} \label{eqn:phi0f}
     \varphi_0 = f \circ \mathrm{Ad}_{U_0}. 
    \end{equation}
    This proves that $\varphi_0$ is the differential of an isometry on $\mathrm{SU}(2^N)$.
   
   We now show that $-{\rm id}$ or $- \varphi_0$ cannot be the differentials of isometries.
    In fact if $- {\rm id}$ is the differential of isometry, then the geodesic symmetry with respect to $e$ is an isometry. 
    Since $g$ is right invariant, this implies that the geodesic symmetry with respect to every point must be isometries. 
    Therefore  $(\mathrm{SU}(2^N), g)$ is a Riemannian symmetric space. By the classification of compact simply connected 
    irreducible symmetric spaces (see, for example, \cite[Section 6 in Chapter X]{Helgason}), 
    $g$ must be bi-invariant. This contradicts to the assumption of $g$.  
    Hence $- {\rm id}$ cannot be the differential of isometry.
    Since $- {\rm id} = (- \varphi_0 ) \circ \varphi_0$ and $\varphi_0$ is the differential of an isometry,
    this also implies that $- \varphi_0$ cannot be the differential of isometry.
    The lemma is thus proved.
     $\Box$

    Now we can give a complete description of $\mathcal{L}(\mathrm{Isom}_e(\mathrm{SU}(2^N), g))$.
     
    \begin{theorem}
    \label{mainthm}
     Assume $N \geq 2$. For any  penalty metric $g$ on $\mathrm{SU}(2^N)$ satisfying Condition \eqref{cond:V1_invariant}, we have    \begin{equation}\label{eq:isotropy_group}
    \mathcal{L}(\mathrm{Isom}_e(\mathrm{SU}(2^N), g)) 
    = (\overline{\mathrm{Ad}}(\mathrm{SU}(2)^{\otimes N}) \rtimes \{f,\mathrm{id}\}) \rtimes \overline{S}_N,
    \end{equation}
    where $f: \mathfrak{su}(2^N) \to \mathfrak{su}(2^N)$ is the map taking complex conjugation, and 
    $\overline{S}_N$ is defined by equation \eqref{eqn:Sbar}.
    \end{theorem}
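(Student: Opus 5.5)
We determine $\mathcal{L}(\mathrm{Isom}_e(\mathrm{SU}(2^N), g))$ by squeezing it between an upper bound from Theorem \ref{normalizer} and the explicit isometries already constructed. Throughout write $\mathcal{L} := \mathcal{L}(\mathrm{Isom}_e(\mathrm{SU}(2^N), g))$, and write $\mathcal{H}$, $\Phi$, $\overline{S}_N$ as in Section 4.

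\textbf{Step 0: $g$ is not bi-invariant.} This is needed to invoke Lemma \ref{lem:symmetric_condition}. Suppose $g$ were bi-invariant, i.e. $q_1=\cdots=q_N$ by Lemma \ref{lem:AdInv}. Then every inner automorphism lies in $\mathrm{Aut}(\mathrm{SU}(2^N),g)$. But Condition \eqref{cond:V1_invariant} fails for some inner automorphism. Since $N\ge 2$, there is $Y$ of weight $2$ with $[Y,W_1]\not\subseteq W_1$; for instance, the bracket of $-i\sigma_1^{(1)}\sigma_1^{(2)}$ with $-i\sigma_3^{(1)}$ is a nonzero multiple of $\sigma_2^{(1)}\sigma_1^{(2)}$, which has weight $2$. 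Then $\mathrm{Ad}_{\exp(tY)}$ does not preserve $W_1$ for small $t\neq 0$, because its derivative at $t=0$, namely $\mathrm{ad}_Y$, does not. Hence $g$ is not bi-invariant.

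\textbf{Step 1: the inclusion $\subseteq$.} Take $x\in\mathcal{L}$. By Theorem \ref{normalizer}, we can write $x=h\,\phi\,s$ with $h\in\mathcal{H}$, $\phi\in\Phi$ and $s\in\overline{S}_N$. We have $\mathcal{H}\subset\mathcal{L}$ by Theorem \ref{isotropycomponent}, and $\overline{S}_N\subset\mathcal{L}$ by Lemma \ref{lem:SNIsom}. Hence $\phi=h^{-1}xs^{-1}\in\mathcal{L}$ is the differential of an isometry. By Lemma \ref{lem:symmetric_condition} together with Step 0, $\phi\in\{\mathrm{id},\varphi_0\}$. By equation \eqref{eqn:phi0f}, $\varphi_0=f\circ\mathrm{Ad}_{U_0}$ with $\mathrm{Ad}_{U_0}\in\mathcal{H}$. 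Therefore $x\in \mathcal{H}\cdot\{f,\mathrm{id}\}\cdot\overline{S}_N$.

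\textbf{Step 2: the inclusion $\supseteq$.} This is immediate. $f$ is the differential of the complex-conjugation isometry $F$ (shown in the proof of Lemma \ref{lem:symmetric_condition}), and $\mathcal{H},\overline{S}_N\subset\mathcal{L}$ as noted in Step 1.

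\textbf{Step 3: the semidirect product structure.} We need three facts.
\begin{enumerate}
\item $f$ normalizes $\mathcal{H}$: indeed $f\,\mathrm{Ad}_U\, f^{-1}=\mathrm{Ad}_{\overline U}$, and $\overline U\in\mathrm{SU}(2)^{\otimes N}$ whenever $U\in\mathrm{SU}(2)^{\otimes N}$.
\item $f\notin\mathcal{H}$: otherwise $\varphi_0=f\,\mathrm{Ad}_{U_0}\in\mathcal{H}\cap\Phi$, which is trivial by Lemma \ref{centralizer} and the triviality of the center of $\mathcal{H}$. However $\varphi_0\neq\mathrm{id}$, since it acts by $-1$ on weight-$2$ elements and such elements exist because $N\ge2$.
\item $\overline{S}_N$ normalizes $\mathcal{H}\{f,\mathrm{id}\}$ and meets it trivially. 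Each $\psi_\tau$ commutes with $f$, because conjugation acts factorwise and so commutes with permuting tensor factors; and $\psi_\tau$ normalizes $\mathcal{H}$ by equation \eqref{permutation}. For the trivial intersection, observe that $\mathcal{H}\{f,\mathrm{id}\}=\mathcal{H}\{\varphi_0,\mathrm{id}\}\subset\mathcal{H}\times\Phi$. By the splitting in Lemma \ref{split}, $\overline{S}_N$ meets $\mathcal{H}\times\Phi$ trivially.
\end{enumerate}
Together these give the stated iterated semidirect product.

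The only non-formal input is the use of Lemma \ref{lem:symmetric_condition}, which requires verifying that Condition \eqref{cond:V1_invariant} excludes bi-invariant metrics. I expect Step 0 to be the main point to be careful about; everything else is group-theoretic bookkeeping.
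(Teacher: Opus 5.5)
Your proposal is correct and follows essentially the same route as the paper. Both arguments squeeze $\mathcal{L}$ between the upper bound of Theorem \ref{normalizer} and the known isometries $\mathcal{H}$, $\overline{S}_N$, $f$, use Lemma \ref{lem:symmetric_condition} to cut $\Phi$ down to $\{\mathrm{id},\varphi_0\}$, and then trade $\varphi_0$ for $f$ via $\varphi_0=f\circ\mathrm{Ad}_{U_0}$. Your Step 0 (Condition \eqref{cond:V1_invariant} excludes bi-invariant $g$, which is needed to apply Lemma \ref{lem:symmetric_condition}) and your Step 3 verification of the semidirect-product structure make explicit two points that the paper's proof leaves unstated.
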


    \noindent
    {\bf Proof}: By Theorem \ref{normalizer}, Proposition \ref{prop:local_unitary_isometry}, 
     and Lemmas \ref{lem:SNIsom} and \ref{lem:symmetric_condition}, we have
    \[ \mathcal{L}(\mathrm{Isom}_e(\mathrm{SU}(2^N), g)) 
    = (\overline{\mathrm{Ad}}(\mathrm{SU}(2)^{\otimes N}) \rtimes \{\varphi_0,\mathrm{id}\}) \rtimes \overline{S}_N, \]
    where $\varphi_0$ is defined by equation \eqref{eqn:varphi0}.   By equation \eqref{eqn:phi0f}, $\varphi_0$ is a composition
    of $f$ and an element in $\overline{\mathrm{Ad}}(\mathrm{SU}(2)^{\otimes N})$. Therefore we can replace
    $\varphi_0$ by $f$ and obtain the desired result.
    $\Box$

   By Theorem \ref{mainthm}, $\mathrm{Isom}_e(\mathrm{SU}(2^N), g) \subset {\rm Aut}(\mathrm{SU}(2^N))$. Hence, by Lemma \ref{normalsubgp}, 
   we obtain the following result for the full isometry group:
 
    \begin{corollary} \label{cor:mainResult}
    Assume $N \geq 2$. For any  penalty metric $g$ on $\mathrm{SU}(2^N)$ satisfying Condition \eqref{cond:V1_invariant},
   \begin{equation}\label{eq:full_isometry_group}
    \mathrm{Isom}(\mathrm{SU}(2^N), g) = R(\mathrm{SU}(2^N)) \rtimes 
            ((\overline{\mathcal{A}\mathrm{d}}(\mathrm{SU}(2)^{\otimes N}) \rtimes \{F,\mathrm{id}\}) \rtimes \overline{\mathcal{S}}_N),
    \end{equation}
    where $R(\mathrm{SU}(2^N))$ is the group of all right translations by elements in $\mathrm{SU}(2^N)$, 
          $F$ is the map taking complex conjugation,
         $\overline{\mathcal{S}}_N$ is the group generated by $\mathcal{A}\mathrm{d}_{U_{ij}}$
         for all $1 \leq i < j \leq N$, and $U_{ij}$ is defined by equation \eqref{SWAP}.
    \end{corollary}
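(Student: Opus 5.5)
The corollary should follow directly from Theorem \ref{mainthm} together with Lemma \ref{normalsubgp}. The first step is to show that every element of $\mathrm{Isom}_e(\mathrm{SU}(2^N), g)$ is an automorphism of $\mathrm{SU}(2^N)$. By Theorem \ref{mainthm}, the differential $\varphi_{*e}$ of any $\varphi \in \mathrm{Isom}_e(\mathrm{SU}(2^N), g)$ can be written as $\mathrm{Ad}_U \circ f^{\delta} \circ \omega$, where $U \in \mathrm{SU}(2)^{\otimes N}$, $\delta \in \{0,1\}$, and $\omega \in \overline{S}_N = \Omega$. In the proof of Lemma \ref{split} we saw that $\omega$ is a product of the maps $\mathrm{Ad}_{U_{ij}}$.

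This means $\varphi_{*e}$ is the differential at $e$ of the automorphism
\[
\psi := \mathcal{A}\mathrm{d}_U \circ F^{\delta} \circ \mathcal{A}\mathrm{d}_{U_{i_1j_1}} \circ \cdots \circ \mathcal{A}\mathrm{d}_{U_{i_mj_m}}.
\]
Each factor of $\psi$ is an isometry: use Proposition \ref{prop:local_unitary_isometry} for the first factor, the proof of Lemma \ref{lem:symmetric_condition} for $F$, and Lemma \ref{lem:SNIsom} together with Lemma \ref{automorphism_isometry} for the $U_{ij}$ factors. So both $\varphi$ and $\psi$ lie in $\mathrm{Isom}_e$ and have the same differential at $e$. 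Since $\mathcal{L}$ is injective, $\varphi = \psi \in \mathrm{Aut}(\mathrm{SU}(2^N))$.

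Lemma \ref{normalsubgp} then gives that $R(\mathrm{SU}(2^N))$ is normal and that $\mathrm{Isom} = R(\mathrm{SU}(2^N)) \rtimes \mathrm{Isom}_e$.

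It remains to check that $\mathcal{L}^{-1}$ carries the decomposition (\ref{eq:isotropy_group}) to the stated one. The map $\mathcal{L}$ is an injective homomorphism, and on automorphisms it is compatible with composition. It sends $\overline{\mathcal{A}\mathrm{d}}(\mathrm{SU}(2)^{\otimes N})$ onto $\overline{\mathrm{Ad}}(\mathrm{SU}(2)^{\otimes N})$, sends $F$ to $f$, and sends $\overline{\mathcal{S}}_N$ onto $\Omega = \overline{S}_N$. Because $\mathcal{L}$ is a group isomorphism onto its image, the nested semidirect product structure transfers verbatim, which yields (\ref{eq:full_isometry_group}).

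I expect no serious obstacle. The only point needing care is that $\overline{\mathcal{S}}_N$ corresponds under $\mathcal{L}$ exactly to $\overline{S}_N$ and is isomorphic to $S_N$, i.e.\ that $\mathcal{L}$ restricted to the group generated by the $\mathcal{A}\mathrm{d}_{U_{ij}}$ is injective with image $\Omega$. This follows from the injectivity of $\mathcal{L}$ and Lemma \ref{lem:UijSN}. Normality of the inner factors is inherited from Theorem \ref{mainthm} through the isomorphism.
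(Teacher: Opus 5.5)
Your proposal is correct and follows the paper's route: Theorem \ref{mainthm} gives $\mathrm{Isom}_e(\mathrm{SU}(2^N),g)\subset\mathrm{Aut}(\mathrm{SU}(2^N))$, and Lemma \ref{normalsubgp} then yields the semidirect product decomposition. You also spell out two steps the paper leaves implicit. First, you use injectivity of $\mathcal{L}$ to pass from ``the differential agrees with that of an isometric automorphism'' to equality of the isometries themselves. Second, you check that the nested semidirect structure of (\ref{eq:isotropy_group}) transfers through $\mathcal{L}$.
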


   Together with Lemma \ref{lem:SU2SO3N},  this result implies Theorem \ref{thm:mainIntro}.

\vspace{30pt} \noindent
Xiaobo Liu \\
School of Mathematical Sciences \& \\
Beijing International Center for Mathematical Research, \\
Peking University, Beijing, China. \\
Email: {\it xbliu@math.pku.edu.cn}
\ \\ \ \\
Lei Zheng \\
School of Mathematical Sciences, \\
Peking University, Beijing, China. \\
Email: {\it zhenglei0813@stu.pku.edu.cn}

\end{document}